\numberwithin{equation}{section}
\definecolor{OrangeRed}{cmyk}{0,0.6,1,0}            
\definecolor{DarkBlue}{cmyk}{1,1,0,0.20}
\definecolor{DarkGreen}{cmyk}{1,0,0.6,0.2}
\definecolor{myblue}{rgb}{0.66,0.78,1.00}
\definecolor{Violet}{cmyk}{0.79,0.88,0,0}
\definecolor{Lavender}{cmyk}{0,0.48,0,0}
\newtheorem{thm}{Theorem}[section]
\newtheorem{theorem}[thm]{Theorem}
\newtheorem{main theorem}[thm]{Main Theorem}
\newtheorem{corollary}[thm]{Corollary}
\newtheorem*{main}{Main Theorem}
\newtheorem{lemma}[thm]{Lemma}
\theoremstyle{definition}
\newtheorem{definition}[thm]{Definition}
\newtheorem{remark}[thm]{Remark}
\newtheorem{example}[thm]{Example}
\DeclareMathOperator{\Int}{int}
\DeclareMathOperator{\Conv}{Conv}
\newcommand{\be}{\begin{equation}}
\newcommand{\ee}{\end{equation}}
\DeclareMathOperator{\weight}{wt}
\newcommand{\PPr}{\mathbb{P}}
\title[Uniqueness of the Gibbs measure for the $4$-state Potts model]{Uniqueness of the Gibbs measure for the $4$-state anti-ferromagnetic Potts model on the regular tree}
\author{David de Boer}
\author{Pjotr Buys}
\thanks{$\ddagger$ DdB and PB are funded by the Netherlands Organisation of Scientific Research (NWO): 613.001.851}
\author{Guus Regts}
\thanks{$\diamond$ Funded by the Netherlands Organisation of Scientific Research (NWO): VI.Vidi.193.068}
\date{\today}
\address[David de Boer, Pjotr Buys, Guus Regts]{Korteweg de Vries Institute for Mathematics, University of Amsterdam. P.O. Box 94248  
1090 GE Amsterdam  
The Netherlands}
\email{\{daviddeboer2795,pjotr.buys,guusregts\}@gmail.com}
\begin{document}



\begin{abstract}
We show that the $4$-state anti-ferromagnetic Potts model with interaction parameter $w\in(0,1)$ on the infinite $(d+1)$-regular tree has a unique Gibbs measure if $w\geq 1-\frac{4}{d+1}$ for all $d\geq 4$. This is tight since it is known that there are multiple Gibbs measures when $0\leq w<1-\frac{4}{d+1}$ and $d\geq 4$.

We moreover give a new proof of the uniqueness of the Gibbs measure for the $3$-state Potts model on the $(d+1)$-regular tree for $w\geq 1-\frac{3}{d+1}$ when $d\geq 3$ and for $w\in (0,1)$ when $d=2$.
\\
\quad \\{\bf Keywords.} Gibbs measure, anti-ferromagnetic Potts model, infinite regular tree
\end{abstract}

\maketitle

\begin{section}{Introduction}\label{sec:intro}

The Potts model, originally invented to study ferromagnetism~\cite{potts}, is a model from statistical physics; it also plays a central role in probability theory, combinatorics and computer science. 

Let $G = (V,E)$ be a finite graph. The Potts model on the graph $G$ has two parameters, a number of \emph{states} $q \in \mathbb{Z}_{\geq2}$ and an interaction parameter $w\geq 0$. The case $q=2$ is known as the Ising model.
A \emph{configuration} is a map $\sigma:V \to [q]:=\{1,\dots,q\}$. 
Associated with such a configuration is a \emph{weight}\footnote{In case $w=0$ we implicitly assume that there is at least one configuration of non-zero weight, i.e., a proper coloring.} $w^{m(\sigma)}$, where $m(\sigma)$ is the number of monochromatic edges in the configuration $\sigma$. 
The $q$-state \emph{partition function of the Potts model} is the sum of the weights over all configurations; we denote it as
$
Z(G, q, w) = \sum_{ \sigma:V \rightarrow [q]} w^{m(\sigma)}.
$
In statistical physics one has $w=e^{kJ/T}$, with $J$ being an interaction parameter, $k$ the Boltzmann constant and $T$ the temperature.
We write $Z(G)$ to keep notation short.

The Gibbs measure is the probability measure $\PPr_G[\cdot]$ on the set of configurations of $G=(V,E)$, where the probability of a random configuration\footnote{We use the convention to denote random variables in boldface.} $\bf{\Phi}$ being equal to a given configuration $\phi:V\to [q]$ is proportional to the weight of $\phi$:
\begin{equation}\label{eq:gibbs finite}
\PPr_{G}[{\mathbf \Phi}=\phi] = \frac{w^{m(\sigma)}}{Z(G)}.
\end{equation}

The Potts model is said to be \emph{ferromagnetic} if $w >1$ and \emph{anti-ferromagnetic} if $w < 1$. 
The ferromagnetic Potts model favors configurations with a large number of monochromatic edges, while the anti-ferromagnetic Potts model favors configurations with a small number of monochromatic edges, i.e., configurations that are `close' to proper colorings.

In statistical physics, models like the Potts model are typically considered on infinite graphs such as $\mathbb{Z}^d$ or the Bethe lattice $\mathbb{T}_d$, also known as the infinite $(d+1)$-regular tree.
One can extend the notion of Gibbs measures for finite graphs to this infinite setting (see below for more details).
The Gibbs measure on a finite graph is clearly unique. 
For infinite graphs, depending on the underlying parameter $w$, there may however be multiple Gibbs measures.
The transition from having a unique Gibbs measure to multiple Gibbs measures is referred to as a \emph{phase transition} in statistical physics~\cite{friedli2017} and it is an important problem to determine when this happens in terms of the underlying parameters of the model. 
Moreover, for several $2$-state models, the uniqueness region and the transition from uniqueness to non-uniqueness of the Gibbs measure on $\mathbb{T}_d$ have been connected to the tractability of approximately computing partition functions of these models. 
See e.g.~\cite{weitz06,sly10,slysun14,gal16,sst14,licor}. 
In the case of the anti-ferromagnetic Potts model it is known that in the uniqueness regime on $\mathbb{T}_d$ there is an efficient algorithm to approximately compute the partition function and sample from the Gibbs measure on random $(d+1)$-regular graphs~\cite{blanca2020}. 
See also~\cite{eft2020} for related results on Erd\H{o}s-R\'enyi random graphs without any assumption on uniqueness.
It is moreover expected that the uniqueness to non-uniqueness transition for the anti-ferromagnetic Potts model says something about the tractability of approximating the partition function for the entire class of bounded degree graphs. In particular, approximating the partition function of the Potts model is NP-hard on graphs of maximum degree $d+1$ when $0<w<1 - \frac{q}{d+1}$~\cite{gal15} (for even $q$). 
It is a major open problem to determine whether there exist efficient algorithms for all $w\in(1-\frac{q}{d+1},1]$.

In the present paper we consider the problem of determining when the anti-ferromagnetic Potts model on the infinite $(d+1)$-regular tree has a unique Gibbs measure. 
Before stating our main result, we first give a formal definition of Gibbs measures on the $(d+1)$-regular tree.

\subsection*{Gibbs measures, uniqueness and main result}
We follow Brightwell and Winkler~\cite{brigt99,bright02} to introduce the notion of Gibbs measures on $\mathbb{T}_d$, see also~\cite{rozikov2013gibbs,friedli2017} for more details and background.

Throughout we fix a degree $d\geq 2$ and an integer $q\geq 2$. 
We denote the vertex set of $\mathbb{T}_d$ by $V_d$
and we denote the space of all configurations $\{\psi:V_d\to [q]\}$ by $\Omega_{q,d}$.
For a set $U\subset V_d$ we denote by $\partial U$ the set of vertices in $U$ that are adjacent to some vertex in $V_d\setminus U$. We refer to $\partial U$ as the \emph{boundary} of $U$. We denote by $U^{\circ}:=U\setminus \partial U$ the \emph{interior} of $U$. For $\psi\in \Omega_{q,d}$ and $U\subset V_d$ we denote the restriction of $\psi$ to $U$ by $\psi\!\restriction_U$.
\begin{definition}[Gibbs measure]
We equip $\Omega_{q,d}$ with the sigma algebra generated by sets of the form $\{\psi\in \Omega_{q,d}\mid \psi\!\restriction_U=\phi\}$ where $U\subset V_d$ is a finite set and $\phi:U\to[q]$ a fixed coloring of $U$.
A probability measure $\mu$ on $\Omega_{q,d}$ is called a \emph{Gibbs measure} if for any finite set $U\subset V_d$ and $\mu$-almost every $\phi\in \Omega_{q,d}$, we have
\begin{equation}\label{eq:def Gibbs}
\PPr_{{\bf \Phi}\sim \mu}[{\bf \Phi}\!\restriction_{U}=\phi\!\restriction_{U} \mid {\bf {\bf \Phi}}\!\restriction_{V_d\setminus U^{\circ}}= \phi\!\restriction_{V_d\setminus U^\circ}]=
\PPr_{U}[{\bf \Phi}\!\restriction_{U}=\phi\!\restriction_U \big | {\bf \Phi}\!\restriction_{\partial U}=\phi\!\restriction_{\partial U}],
\end{equation}
where the second probability $\PPr_{U}$ denotes the probability of seeing configuration $\phi$ on the finite graph $\mathbb{T}_d[U]$ induced by $U$ conditioned on the event of being equal to $\phi$ on $\partial U$. This latter probability is obtained by dividing the weight of ${\bf \Phi}\!\restriction_U$ by the sum of the weights of all colorings of $U$ that agree with $\phi$ on $\partial U$, cf.~\eqref{eq:gibbs finite}.
\end{definition}
\begin{remark}
Note that the conditional probability on the left-hand side of \eqref{eq:def Gibbs} cannot be computed using the standard formula for conditional probabilities, as we in general condition on an event of measure zero. Therefore the formalism of conditional expectations should be used to evaluate this conditional probability. See~\cite{friedli2017} for more details.
\end{remark}

By a compactness argument one can show that there always is at least one Gibbs measure on $\Omega_{q,d}$ cf.~\cite{friedli2017,brigt99}. 
The question of whether there is a unique Gibbs measure can be reformulated in terms of a certain decay of correlations. To do so we require some definitions.
We denote by $\mathbb{T}^n_{d}$ the finite tree obtained from $\mathbb{T}_d$ by fixing a root vertex $r_d$, deleting all vertices at distance more than $n$ from the root, deleting one of the neighbors of $r_d$ and keeping the connected component containing $r_d$.
We denote the set of leaves of $\mathbb{T}^n_{d}$ by $\Lambda_{n,d}$, except when $n=0$, in which case we let $\Lambda_{d,0}=\{r_d\}$.
We omit the reference to $d$ when this is clear from the context.
The next lemma reformulates uniqueness of the Gibbs measure in terms of the dependence on the distribution of the colors of the root vertex on the coloring of the leaves. 

    \begin{lemma}\label{def:uniqueness}
    The $q$-state Potts model with parameter $w\geq 0$ on the infinite $(d+1)$-regular tree has a \emph{unique} Gibbs measure if and only if for all colors $c \in [q]$ it holds that 
    \begin{equation}\label{equation:uniqueness}
        \limsup_{n \to \infty} \max_{\tau: \Lambda_{n,d} \to [q]} \bigg\vert \PPr_{\mathbb{T}^n_{d}} [ {\bf \Phi}(r_d) = c \ \vert\ {\bf\Phi}\!\restriction_{ \Lambda_{n,d}} = \tau] - \frac{1}{q}\bigg\vert = 0.
    \end{equation}
    \end{lemma}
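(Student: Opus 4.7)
The plan is to invoke the spatial Markov property enjoyed by every Gibbs measure on $\mathbb{T}_d$: for any Gibbs measure $\mu$, any finite $U \subset V_d$, and $\mu$-almost every boundary configuration $\phi$, the conditional distribution $\PPr_\mu[{\bf \Phi}\!\restriction_U = \cdot \mid {\bf \Phi}\!\restriction_{V_d \setminus U^\circ} = \phi\!\restriction_{V_d \setminus U^\circ}]$ coincides with the finite Potts distribution on the induced subtree with boundary $\phi\!\restriction_{\partial U}$; this is exactly \eqref{eq:def Gibbs}. Consequently the $\mu$-probability of any cylinder event is an average of finite conditional Potts probabilities over the outer boundary, and uniqueness amounts to the statement that these conditional probabilities become independent of the boundary in the limit.

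For the ``if'' direction, assume \eqref{equation:uniqueness} holds for every color $c \in [k]$. Fix a vertex $v \in V_d$; the ball $B_n(v)$ of radius $n$ in $\mathbb{T}_d$ decomposes at $v$ into $d+1$ copies of the principal branch $\mathbb{T}_d^{n-1}$, each rooted at a neighbor of $v$. The tree recursion expresses $\PPr_{B_n(v)}[{\bf \Phi}(v) = c \mid {\bf \Phi}\!\restriction_{\partial B_n(v)} = \tau]$ as a ratio of products of the quantities $\PPr_{\mathbb{T}_d^{n-1}}[{\bf \Phi}(r_d) = c' \mid {\bf \Phi}\!\restriction_{\Lambda_{n-1}} = \tau_i]$, where $\tau_i$ is the restriction of $\tau$ to the $i$-th branch. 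By hypothesis each of these tends to $1/k$ uniformly in $\tau_i$, and substituting into the recursion yields uniform convergence of $\PPr_{B_n(v)}[{\bf \Phi}(v) = c \mid \tau]$ to $1/k$. Integrating against $\mu$ shows every single-vertex marginal equals $1/k$, independent of $\mu$. Joint marginals on a finite set $U$ are handled analogously: conditioning on the colors on $U$ makes the subtrees hanging from $U$ Markov-independent, and the uniform convergence in each branch shows the joint probability is independent of the outer boundary. Thus every finite-dimensional marginal is determined, so the Gibbs measure is unique.

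For the ``only if'' direction, suppose \eqref{equation:uniqueness} fails: after passing to a subsequence and fixing the sign there exist $c \in [k]$, $\epsilon > 0$, and boundary conditions $\tau_n : \Lambda_{n,d} \to [k]$ with $\PPr_{\mathbb{T}_d^n}[{\bf \Phi}(r_d) = c \mid {\bf \Phi}\!\restriction_{\Lambda_n} = \tau_n] \geq 1/k + \epsilon$. Using the tree recursion at $r_d$, choose boundary data on the full ball $B_n(r_d)$ whose principal-branch restrictions equal $\tau_n$; the resulting finite conditional Gibbs measure $\nu_n$ still satisfies $\nu_n[{\bf \Phi}(r_d) = c] \geq 1/k + \epsilon'$ for some $\epsilon' > 0$ independent of $n$. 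Extending each $\nu_n$ arbitrarily to a probability measure on $\Omega_{k,d}$ and passing to a weak limit along a subsequence, compactness of $\Omega_{k,d}$ produces a probability measure $\mu$; the Markov property of the $\nu_n$ passes to the limit, so $\mu$ is a Gibbs measure with $\PPr_\mu[{\bf \Phi}(r_d) = c] \geq 1/k + \epsilon'$. Averaging any Gibbs measure over the $k!$ color permutations yields a second Gibbs measure whose root marginal is exactly $1/k$ at every color, so the two measures differ, contradicting uniqueness.

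The delicate step is the ``if'' direction, specifically the passage from single-vertex marginals to joint marginals. The argument is clean once the recursion is written down, but one must verify carefully that uniform convergence of the root marginal in the principal branch (where the root has degree $d$) transfers to the full ball at a general vertex of degree $d+1$, accounting both for the extra edge and for the $(d+1)$-fold tensorization at the central vertex; this is the main bookkeeping in the proof.
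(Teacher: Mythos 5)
Your proof is correct and follows the same Brightwell--Winkler blueprint as the paper: in the ``if'' direction express finite-volume marginals as averages over distant boundaries of finite Potts probabilities, factorize through the Markov property, and use uniform convergence of the root marginal of the principal branch; in the ``only if'' direction produce a biased Gibbs measure from a limiting procedure. Two small differences are worth noting. First, the paper handles a general finite set $U$ in one pass, by thickening $U$ to a subtree whose non-leaf vertices already have degree $d+1$ and writing
\[
\PPr[{\bf \Phi'}\!\restriction_{U}=\psi \mid {\bf \Phi'}\!\restriction_{\partial W}=\rho]
  =\PPr[{\bf \Phi'}\!\restriction_U=\psi \mid {\bf \Phi'}\!\restriction_{\partial U}=\psi\!\restriction_{\partial U}]
   \cdot \PPr[{\bf \Phi'}\!\restriction_{\partial U}=\psi\!\restriction_{\partial U}\mid {\bf \Phi'}\!\restriction_{\partial W}=\rho],
\]
so that the degree--$d$/degree--$(d+1)$ bookkeeping you flag as delicate never arises: each leaf of $U$ has exactly $d$ descendants outside $U$ and therefore hangs a clean copy of $\mathbb{T}^n_d$. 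Your separate single-vertex warm-up, which does hit the $d+1$-fold recursion, is subsumed by this and could be dropped; the step you compress to ``handled analogously'' is precisely where the paper spends its effort, and if you expand it you will land on the same factorization. Second, in the ``only if'' direction the paper produces two distinct Gibbs measures directly from boundary conditions $\tau_i$ and their color-swapped counterparts $\tau_i'$, whereas you compare your limiting measure to its $S_k$-symmetrization; this is a slick alternative that avoids having to locate two colors with opposite biases, and both routes are equally valid. One point to make airtight in a full write-up: when you ``extend each $\nu_n$ arbitrarily and pass to a weak limit,'' you should extend by a fixed deterministic configuration outside $B_n(r_d)$ (or take the finite Gibbs measure on larger and larger balls) so that the DLR equation for a fixed finite $U$ is verified for all sufficiently large $n$ and hence survives the weak limit.
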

While this result is well known we will provide a proof for convenience of the reader in Appendix~\ref{app:proof} based on Brightwel and Winkler's proof~\cite[Theorem 3.3]{bright02} for the case $w=0$.
We moreover note that~\eqref{equation:uniqueness} is the property of uniqueness used in algorithmic applications~\cite{blanca2020}.

Define $w_c:=\max\{0,1-\frac{q}{d+1}\}$.
It is a folklore conjecture that this Gibbs measure is unique if and only if $w\geq w_c$ (if $q=d+1$, the inequality should be read as a strict inequality). Non-uniqueness for $w<w_c$ has been known for a long time~\cite{Peruggi1,Peruggi2}

For $q>d+1$ and thus $w_c=0$, a Gibbs measure is supported on proper $q$-colorings and in this case the conjecture has been shown to be true by Jonasson~\cite{jon02}. 
For the case $q=3$ and $d\geq 2$ and the case $q=4$ and $d=4$ this has recently been proved by Galanis, Goldberg and Yang~\cite{PottsLeslie3}.
Our main result confirms this conjecture for $q=4$ and $d\geq 4$.
Very recently Bencs together with the authors of the present paper~\cite{bencs2022uniqueness} have confirmed this conjecture for all $q\geq 5$ provided $d$ is large enough.

\begin{main} \label{thm:main}
Let $d\in \mathbb{N}_{\geq 4}$.
Then the $4$-state anti-ferromagnetic Potts model on $\mathbb{T}_d$ has a unique Gibbs measure if and only if $w\geq1-\frac{4}{d+1}$.
\end{main}
Our proof of this result follows a different approach than the one taken in~\cite{PottsLeslie3}, which heavily relies on rigorous (but not easily verifiable) computer calculations. 
In particular, our approach allows us to recover the results from~\cite{PottsLeslie3}, thereby removing the need for these computer calculations. See Theorem~\ref{thm:precisemaintheorem} below for the full statement of what we prove with our approach. 
\\

\noindent{\bf Organization.}
In the next section we discuss our approach towards proving our main theorem arriving at a geometric condition for uniqueness that we check in Section~\ref{sec:proof main} to prove our main theorem, deferring the verification of a crucial inequality to Section~\ref{sec: Inequality section}. Finally, in Section~\ref{sec:conclude} we finish with some concluding remarks and open questions.

\end{section}

\begin{section}{Approach and setup}

Our main goal in this section is to derive a geometric condition for ratios of probabilities that implies uniqueness of the Gibbs measure on the $(d+1)$-regular tree. This condition will then be verified in the following sections. Along the way we will comment on how our approach relates to the approach of Galanis, Goldberg and Yang~\cite{PottsLeslie3}.

\begin{subsection}{Ratios of probabilities and the tree recursion}
Instead of working directly with the probabilities we work with ratios of probabilities just as in~\cite{PottsLeslie3}. 

Let us introduce a few concepts to facilitate the discussion. 
Fix $n,d\in \mathbb{N}$ and write $\mathbb{T}^n_{d}=(V,E)$.
Let $\tau:\Lambda_{n,d}\to [q]$. This will be called a \emph{boundary condition}. 
We denote by
\begin{equation}\label{eq:boundary pf}
Z_{\tau}(\mathbb{T}^n_{d}) = \sum_{\substack{ \sigma:V \rightarrow [q]\\ \sigma\restriction_{\!\Lambda_{n,d}} = \tau}} w^{m(\sigma)},
\end{equation}
the restricted partition function.
For $i \in [q]$ we denote by $Z_{i,\tau}(\mathbb{T}^n_{d})$ the sum \eqref{eq:boundary pf} restricted to those $\sigma$ that associate color $i$ to the root vertex.
We define the \emph{ratio}
\begin{equation}\label{eq:ratio}
R_{i,\tau}(\mathbb{T}^n_{d}) =  \frac{Z_{i,\tau}(\mathbb{T}^n_{d})}{Z_{q,\tau}(\mathbb{T}^n_{d})}.
\end{equation}
Note that $R_{q,\tau}(\mathbb{T}^n_{d})=1$.
We moreover remark that $R_{i,\tau}(\mathbb{T}^n_{d})$ can be interpreted as the ratio of the probabilities that the root gets color $i$ (resp. $q$) given the boundary condition $\tau$.

We define for $n \geq 0$, $\widehat{\mathbb{T}}^n_{d}$ to be the rooted tree obtained from $\mathbb{T}^n_{d}$ by adding a new root $\hat r_d$ connecting it to the original root $r_d$ with a single edge.
Note that the set of non-root leaves of $\widehat{\mathbb{T}}^n_{d} $ is just $\Lambda_{n,d}$.
For any boundary condition on $\tau:\Lambda_{n,d}\to [q]$ we define the restricted partition function, $Z_{i,\tau}(\widehat{\mathbb{T}}^n_{d})$ and ratio $R_{i,\tau}(\widehat{\mathbb{T}}^n_{d})$ analogously as for ${\mathbb{T}}^n_{d}$.

The next lemma provides a sufficient condition for ${\mathbb{T}}_{d}$ to have a unique Gibbs measure in terms of these ratios, which we prove at the end of this section. 

\begin{lemma}\label{lem:R=1impliesuniqueness}
   Let $q,d \in \mathbb{N}$ and $w\in(0,1)$.  Suppose that for all $i \in [q-1]$ and for all $\delta>0$ there exists $N >0$ such that for all $n \geq N$ and for all boundary conditions $\tau: \Lambda_{n,d} \to [q]$ we have
   \[
  |R_{i,\tau}(\widehat{\mathbb{T}}^n_{d})-1| < \delta,
   \]
   then the tree ${\mathbb{T}}_{d}$ has a unique Gibbs measure. 
  \end{lemma}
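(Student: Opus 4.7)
The plan is to deduce this from Lemma~\ref{def:uniqueness} by a short direct computation. The idea is that $\Pr_{\mathbb{T}^n_d}[{\bf \Phi}(r_d) = c \mid {\bf\Phi}\!\restriction_{\Lambda_{n,d}} = \tau]$ can be written directly in terms of the ratios $R_i := R_{\mathbb{T}^n_d, r_d, i}$, and if all of these ratios are close to $1$, the probability must be close to $1/k$ uniformly in $\tau$.

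More concretely, I would first fix a boundary condition $\tau:\Lambda_{n,d}\to[k]$ and observe that, by definition of the Gibbs measure on the finite tree and the restricted partition functions,
\begin{equation*}
\PPr_{\mathbb{T}^n_d}[{\bf \Phi}(r_d) = c \mid {\bf\Phi}\!\restriction_{\Lambda_{n,d}} = \tau]
= \frac{Z^{r_d}_c(\mathbb{T}^n_d)}{\sum_{j=1}^{k} Z^{r_d}_j(\mathbb{T}^n_d)}
= \frac{R_c}{1+\sum_{i=1}^{k-1} R_i},
\end{equation*}
after dividing numerator and denominator by $Z^{r_d}_k(\mathbb{T}^n_d)$ (and recalling $R_k=1$). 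Writing $D := 1 + \sum_{i=1}^{k-1} R_i$, I would then note that
\begin{equation*}
\frac{R_c}{D} - \frac{1}{k} = \frac{kR_c - D}{kD},
\end{equation*}
and expand $kR_c - D = k(R_c-1) - \sum_{i=1}^{k-1}(R_i-1)$ (with the convention that $R_k-1=0$).

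Given $\varepsilon > 0$, I would apply the hypothesis with $\delta$ chosen small enough (depending only on $k$ and $\varepsilon$) so that $|R_i-1|<\delta$ for every $i\in[k-1]$ uniformly in $\tau$, and then estimate $|kR_c-D|\leq (2k-1)\delta$ and $|D|\geq k-(k-1)\delta \geq k/2$; this yields $|R_c/D - 1/k|\leq \varepsilon$ uniformly in $c\in[k]$ and in $\tau$. Hence the quantity in \eqref{equation:uniqueness} tends to $0$ for every $c$, and Lemma~\ref{def:uniqueness} gives uniqueness of the Gibbs measure.

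There is essentially no obstacle here beyond being careful with the trivial algebra: the whole content of the lemma is that closeness of the ratios $R_i$ to $1$ is equivalent to closeness of the root-marginal conditional probabilities to the uniform distribution, and the equivalence is immediate from the formula above. The only thing worth flagging is the role of the hypothesis $w\in(0,1)$, which ensures that all partition functions $Z^{r_d}_j(\mathbb{T}^n_d)$ are strictly positive, so that the ratios are well-defined and the denominator $D$ stays bounded away from zero.
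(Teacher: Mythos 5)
Your proof is correct and follows essentially the same route as the paper's: both express the conditional root-probability as $R_c/(1+\sum_{i=1}^{k-1}R_i)$ and then invoke Lemma~\ref{def:uniqueness}. The only cosmetic difference is that the paper finishes with a soft continuity argument (the map from the ratios to the maximum deviation from $1/k$ is continuous and vanishes at the all-ones vector), whereas you carry out the explicit $\varepsilon$--$\delta$ estimate; your algebra checks out, and your remark about $w\in(0,1)$ keeping the ratios well-defined is a reasonable point to flag.
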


An advantage of working with the ratios of probabilities is that the well known tree recursion for the Potts model takes a convenient form.
\begin{lemma}\label{lem:treerecursion}
Let $n,d\in \mathbb{N}$ and let $\tau:\Lambda_{n,d}\to [q]$ be a boundary condition.
Let for $i=1,\ldots,d$, $T_i=\widehat{\mathbb{T}}^{n-1}_{d}$ be the components of $\mathbb{T}^n_d-r_d$ where we attach a new root vertex to $r_{d-1}$.
Let $\tau_i$ be the restriction of $\tau$ to $\Lambda_{n-1,d}\to [q]$ viewed as a subset of the vertices of $T_i$.
    Then we have for each $i \in [q-1],$
    \begin{equation}\label{eq:treeformula}
    R_{i,\tau}(\widehat{\mathbb{T}}^{n}_{d}) =  \frac{1+w \prod_{s=1}^{d} R_{i,\tau_s}(\widehat{\mathbb{T}}^{n-1}_{d})+\sum_{l \in [q-1]  \setminus \{i\}} \prod_{s=1}^{d} R_{l,\tau_s}(\widehat{\mathbb{T}}^{n-1}_{d}) }{w+\sum_{l \in [q-1] } \prod_{s=1}^{d} R_{l,\tau_s}(\widehat{\mathbb{T}}^{n-1}_{d})}.
\end{equation}
\end{lemma}
\noindent For completeness we provide a proof for this lemma at the end of this section.

A direct analysis of the recursion in Lemma~\ref{lem:treerecursion} is not straightforward, as it does not contract uniformly on a symmetric domain.
In~\cite{PottsLeslie3} this is remedied by looking at the two-step recursion, that is they analyze the behaviour of the ratio at depth $n$ as a function of the ratios at depth $n+2$.
They show with substantial, yet rigorous, aid of a computer algebra package that this two-step recursion does contract on a symmetric domain (when $q=3$ and $w$ and $d$ are as they should be).
We however take a different, more geometric approach and work instead with the one-step recursion, as described in the next subsection.
\end{subsection}

\begin{subsection}{A geometric condition for uniqueness}\label{sec: Intro functions}
To state a geometric condition, we first introduce some functions that allow us to treat the tree recursion from Lemma~\ref{lem:treerecursion} more concisely. 
Let $q \in \mathbb{Z}_{\geq 2},d \in \mathbb{Z}_{\geq 1}$ and $w \in [0,1)$. For $i \in [q]$ let $\mu_i$ be the map from $\mathbb{R}_{>0}^{q}$ to $\mathbb{R}_{>0}$ given by 
\[
    \mu_{i}(x_1,\dots,x_q) = (w -1)x_i + \sum_{\substack{j=1}}^{q} x_j.
\]
Furthermore, we define
\[
    \tilde{G}(x_1,\dots, x_q) = \left(\mu_1(x_1,\dots,x_{q}),\dots,\mu_{q}(x_1,\dots,x_{q}) \right)
\]
and
\[
    \tilde{F}(x_1,\dots, x_q) = \tilde{G}(x_1^d,\dots,x_{q}^d).
\]

Both $\tilde{F}$ and $\tilde{G}$ are homogeneous maps from $\mathbb{R}^{q}_{>0}$ to itself.
For $x,y \in \mathbb{R}_{>0}^{q}$ we define an equivalence relation $x\sim y$ if and only if $x = \lambda y$ for some $\lambda >0$. We define $\mathbb{P}^{q-1}_{>0}= \mathbb{R}_{>0}^{q} / \sim$ and denote elements of $\mathbb{P}^{q-1}_{>0}$ as $[x_1: \dots: x_q]$. We note that since $\tilde{F}$ and $\tilde{G}$ are homogeneous they are also well defined as maps from $\mathbb{P}^{q-1}_{>0}$ to itself and from now on we consider them as such.

Let $\pi: \mathbb{P}_{> 0}^{q-1} \to \mathbb{R}_{> 0}^{q-1}$ be the projection map defined by $\pi([x_1:\dots:x_q]) = (x_1/x_{q}, \dots, x_{q-1}/x_{q})$ with inverse $\iota: \mathbb{R}_{> 0}^{q-1} \to \mathbb{P}_{> 0}^{q-1}$ defined by $\iota(x_1, \dots, x_{q-1})=[x_1: \dots: x_{q-1}:1]$. Note that $\pi$ and $\iota$ are continuous. We define the maps $G,F$ from $\mathbb{R}_{> 0}^{q-1}$ to itself by $\pi \circ \tilde{G} \circ \iota$ and $\pi \circ \tilde{F} \circ \iota$.  Explicitly
    we have
    \[
        G(x_1,x_2) = \left(\frac{w x_1 + x_2 + 1}{x_1+x_2+w},\frac{x_1 + w x_2 + 1}{x_1+x_2+w}\right)\ \text{ and } \ F(x_1,x_2) = G(x_1^d,x_2^d) 
    \]
   for $q=3$. For $q=4$ we have 
    \[
        G(x_1,x_2,x_3) = \left(\frac{w x_1 + x_2 + x_3 + 1}{x_1+x_2 + x_3 +w},\frac{x_1 + w x_2 + x_3 +  1}{x_1+x_2 + x_3 +w},\frac{x_1 + x_2 + w x_3 +  1}{x_1+x_2 + x_3 +w}\right)
    \]
    and $F(x_1,x_2,x_3) = G(x_1^d,x_2^d,x_3^d)$.

With this definition the recursion from Lemma~\ref{lem:treerecursion} can now be stated as follows. 
Following the notation of the lemma, denote by $x_l$ the following log convex combination of the ratios $R_{l,\tau_s}(\widehat{\mathbb{T}}^{n-1}_{d})$,
\begin{equation}\label{eq:log convex combination}
x_l=\left(\prod_{s=1}^d R_{l,\tau_s}(\widehat{\mathbb{T}}^{n-1}_{d})\right)^{1/d}.
\end{equation}
Then
\begin{equation}\label{eq:recursion F}
\left(R_{1,\tau}(\widehat{\mathbb{T}}^{n}_{d}),\ldots,R_{q-1,\tau}(\widehat{\mathbb{T}}^{n}_{d})\right)=F(x_1,\ldots,x_{q-1}).
\end{equation}

We say that a subset $\mathcal{T} \subseteq \mathbb{R}_{>0}^n$ is \emph{log convex} if $\log\left(\mathcal{T}\right)$ is a convex subset of $\mathbb{R}^n$, where $\log\left(\mathcal{T}\right)$ denotes the set consisting of elements of $\mathcal{T}$ with the logarithm applied to their individual entries. 
The next lemma gives sufficient conditions for uniqueness on the infinite regular tree.

\begin{lemma}
    \label{lem: Tab sequence}
    Suppose that $q\geq 2$, $d\geq 2$ and $w>0$ are such that there exists a sequence $\{\mathcal{T}_n\}_{n\geq 0}$ of log convex subsets of $\mathbb{R}^{q-1}_{>0}$ with the following properties.
    \begin{enumerate}
        \item \label{it:basecase}
        Both the vector with every entry equal to $1/w$ and the vectors obtained from the all-ones vector with a single entry changed to $w$ are elements of $\mathcal{T}_0$.
        \item \label{it:inductionstep}
        For every $m$ we have $F(\mathcal{T}_m) \subseteq \mathcal{T}_{m+1}$.
        \item \label{it:laststep}
        For every $\epsilon > 0$ there is an $M$ such that for all $m\geq M$ every element of $\mathcal{T}_m$ has at most distance $\epsilon$ to the all-ones vector.
    \end{enumerate}
Then the anti-ferromagnetic Potts model with parameter $w$ has has a unique Gibbs measure on $\mathbb{T}_{d}$.
\end{lemma}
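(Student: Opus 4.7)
The plan is to show by induction on $m\geq 0$ that for every boundary condition $\tau:\Lambda_{m,d}\to[k]$ the ratio vector
\[
R_{\widehat{\mathbb{T}}^m_d,\hat r_d}:=(R_{\widehat{\mathbb{T}}^m_d,\hat r_d,1},\dots,R_{\widehat{\mathbb{T}}^m_d,\hat r_d,k-1})
\]
lies in $\mathcal{T}_m$, and then to transfer this back to the ratios of $\mathbb{T}^n_d$ at $r_d$ so as to invoke Lemma~\ref{lem:R=1impliesuniqueness}.

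For the base case $m=0$ the only element of $\Lambda_{0,d}$ is $r_d$ itself, and writing $c:=\tau(r_d)$ a direct computation from the definition of the ratio gives $R_{\widehat{\mathbb{T}}^0_d,\hat r_d,i}=w^{[i=c]}/w^{[k=c]}$. This yields the all-$1/w$ vector when $c=k$, and the all-ones vector with the $c$-th coordinate replaced by $w$ when $c\in[k-1]$, which are precisely the vectors supplied by hypothesis~(\ref{it:basecase}).

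For the inductive step, fix $m\geq 1$ and apply Lemma~\ref{lem:treerecursion} to $\widehat{\mathbb{T}}^m_d$ with $v=\hat r_d$ and $u=r_d$; this exhibits $d$ rooted subtrees, each isomorphic (with its inherited boundary condition) to $\widehat{\mathbb{T}}^{m-1}_d$, whose ratio vectors $y_1,\dots,y_d$ all lie in $\mathcal{T}_{m-1}$ by the induction hypothesis. Setting $Y_l:=\prod_{s=1}^{d} y_{s,l}$ and rewriting~\eqref{eq:treeformula} in terms of the maps $\mu_i$ identifies $R_{\widehat{\mathbb{T}}^m_d,\hat r_d}$ with $G(Y)$. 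Letting $y^\ast:=(y_1\cdots y_d)^{1/d}$ denote the componentwise geometric mean gives $Y=(y^\ast)^d$, so $R_{\widehat{\mathbb{T}}^m_d,\hat r_d}=G((y^\ast)^d)=F(y^\ast)$. The crucial observation is that $\log y^\ast=\tfrac{1}{d}\sum_{s=1}^{d}\log y_s$ is a convex combination of points of $\log\mathcal{T}_{m-1}$, so log-convexity of $\mathcal{T}_{m-1}$ forces $y^\ast\in\mathcal{T}_{m-1}$; hypothesis~(\ref{it:inductionstep}) then yields $R_{\widehat{\mathbb{T}}^m_d,\hat r_d}=F(y^\ast)\in\mathcal{T}_m$, completing the induction.

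To finish, multiplicativity of the restricted partition function over the $d$ subtrees hanging off $r_d$ in $\mathbb{T}^n_d$ shows that the ratio vector $R_{\mathbb{T}^n_d,r_d}:=(R_{\mathbb{T}^n_d,r_d,1},\dots,R_{\mathbb{T}^n_d,r_d,k-1})$ is the componentwise product of $d$ vectors, each an $R_{\widehat{\mathbb{T}}^{n-1}_d,\hat r_d}$ for the boundary condition induced by $\tau$ on the corresponding subtree and hence in $\mathcal{T}_{n-1}$ by the inductive claim. Hypothesis~(\ref{it:laststep}) makes each of these $d$ factors arbitrarily close to the all-ones vector for large $n$, so $R_{\mathbb{T}^n_d,r_d}$ itself tends uniformly to the all-ones vector; this is exactly the hypothesis of Lemma~\ref{lem:R=1impliesuniqueness}, delivering uniqueness. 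I expect the log-convexity step in the inductive proof to be the only conceptually delicate point: it is what collapses the componentwise product of the $d$ children ratios into a single $d$-th power of a point in $\mathcal{T}_{m-1}$, and thereby allows the one-variable iteration encoded by $F$ to control the genuinely asymmetric multi-subtree recursion; the remainder is bookkeeping with the partition function.
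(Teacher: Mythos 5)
Your proof is correct and follows essentially the same route as the paper's: the same induction on the ratio vectors of $\widehat{\mathbb{T}}^m_d$, the same base-case calculation, the same use of log-convexity to replace the componentwise product of the $d$ children's ratios by a $d$-th power of a single point of $\mathcal{T}_{m-1}$ (which the paper states tersely and you spell out via the geometric mean $y^\ast$), and the same transfer back to $\mathbb{T}^n_d$ via multiplicativity of the partition function over the subtrees at $r_d$ in order to invoke Lemma~\ref{lem:R=1impliesuniqueness}.
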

\begin{proof}
By Lemma~\ref{lem:R=1impliesuniqueness}, it suffices to show that regardless of the boundary condition $\tau$ on $\Lambda_{n,d}$,  $R_{i,\tau}(\widehat{\mathbb{T}}_d^n)\to 1$ as $n\to \infty$.

First of all we claim that for all $n \geq 0$ and all boundary conditions $\tau: \Lambda_{n,d} \to [q]$ we have $(R_{1,\tau}(\widehat{\mathbb{T}}^{n}_{d}),\ldots,R_{q-1,\tau}(\widehat{\mathbb{T}}^{n}_{d}))  \in \mathcal{T}_n$.
We prove this by induction on $n$. For the base case, $n=0$, we note that $\widehat{\mathbb{T}}^0_{d} $ consists of one free root $\hat r_{d}$, connected to a colored vertex $v$. 
If $v$ is colored $i \in [q-1]$, then $(R_{1,\tau}(\widehat{\mathbb{T}}^{0}_{d}),\ldots,R_{q-1,\tau}(\widehat{\mathbb{T}}^{0}_{d}))$ consists of a $w$ on position $i$ and ones everywhere else. 
If $v$ is colored $q$, then $(R_{1,\tau}(\widehat{\mathbb{T}}^{0}_{d}),\ldots,R_{q-1,\tau}(\widehat{\mathbb{T}}^{0}_{d})) = (1/w, \dots, 1/w)$.
So the base case follows from item (\ref{it:basecase}). 

Suppose next that for some $n \geq 0$ the claim holds. 
Let $\tau: \Lambda_{d,n+1} \to [q]$ be any boundary condition.
It then follows from \eqref{eq:log convex combination}, \eqref{eq:recursion F} and the assumptions that $\mathcal{T}_n$ is log convex and $F(\mathcal{T}_n)\subseteq (\mathcal{T}_{n+1})$ that
\[
\left(R_{1,\tau}(\widehat{\mathbb{T}}^{n}_{d}),\ldots,R_{q-1,\tau}(\widehat{\mathbb{T}}^{n}_{d})\right) \in \mathcal{T}_{n+1},
\]
completing the induction.


From the claim we just proved and item (\ref{it:laststep}) it then follows that given $\varepsilon>0$ there exists $N>0$ such that for all $n\geq N$, any boundary condition $\tau$ and color $i$, $|R_{i,\tau}(\widehat{\mathbb{T}}_d^n)- 1|<\varepsilon$.
This concludes the proof. 
\end{proof}

In the next section we will construct a sequence of regions $\{\mathcal{T}_n\}_{n \geq 0}$ satisfying the conditions of the lemma. In Subsection~\ref{sec: symmetry of F} we describe a certain symmetry that the map $F$ exhibits, corresponding to the symmetry of the colors in the Potts model. When a region $\mathcal{T}$ has a corresponding symmetry it is easier to understand the image $F(\mathcal{T})$. 
This is explained in Lemma~\ref{lem: One region is enough}. 
In Subsection~\ref{sec: regions Tab} we define a two parameter family of sets $\mathcal{T}_{a,b}$ that display the required symmetry. In Lemma~\ref{lem:convexity} we prove that if simple analytic conditions in $a$ and $b$ are satisfied the sets $\mathcal{T}_{a,b}$ are log-convex. 
In Lemma~\ref{lem: convex hulls} we give inner and outer approximations of the sets $\mathcal{T}_{a,b}$ with simple polytopes. This is convenient since the map $G$ is a fractional linear transformation and therefore preserves convex sets.
These are used in Lemma~\ref{lem:onecondition} in Subsection~\ref{sec: main thm proof} where we show that if more involved analytical conditions are satisfied $\mathcal{T}_{a,b}$ gets mapped strictly inside itself by $F$.
We then combine all ingredients to prove the \hyperref[thm:main]{Main Theorem} as Theorem~\ref{thm:precisemaintheorem}.  
In the proof of Theorem~\ref{thm:precisemaintheorem} we show that we can construct a sequence $\mathcal{T}_n = \mathcal{T}_{a_n,b_n}$ that satisfies the conditions of Lemma~\ref{lem: Tab sequence} using the fact that we can keep satisfying the analytic conditions on $a_n$ and $b_n$. 
This uses a number of technical inequalities whose verification we have moved to Section~\ref{sec: Inequality section} to preserve the flow of the text.

We finish this section be providing proofs of Lemma~\ref{lem:R=1impliesuniqueness} and Lemma~\ref{lem:treerecursion}.

\end{subsection}

\begin{subsection}{Proofs of Lemma~\ref{lem:R=1impliesuniqueness} and Lemma~\ref{lem:treerecursion}}
\begin{proof}[Proof of Lemma~\ref{lem:R=1impliesuniqueness}]
The ratios for $\widehat{\mathbb{T}}^n_{d}$ and $\mathbb{T}^n_{d}$ can easily be expressed in terms of each other. 
Fix any $\tau:\Lambda_{n,d}\to [q]$. Then for any $i=1,\ldots,q-1$,
\begin{equation}\label{eq:expressing R and hatR}
R_{i,\tau}(\widehat{\mathbb{T}}_d^n)=\frac{(w-1) R_{i,\tau}(\mathbb{T}_d^n)+\sum_{j=1}^{q-1} R_{j,\tau}(\mathbb{T}_d^n)+1}{\sum_{j=1}^{q-1}R_{j,\tau}(\mathbb{T}_d^n)+w}\quad \text{ and } \quad 
R_{i,\tau}(\mathbb{T}_d^n)=(R_{i,\tau}(\widehat{\mathbb{T}}_d^{n-1}))^d.
\end{equation}
We may thus assume that for each $\delta>0$ there exist $N'$ such that for all $n\geq N'$,$|R_{i,\tau}(\mathbb{T}_d^n)-1|<\delta$ for all $\tau:\Lambda_{n,d}\to [q].$

For readability, we omit the reference to the subscript $d$ in what follows.
For any $i=1,\ldots,q$ we have
\[
\PPr_{{\mathbb{T}}^{n}} [ {\bf\Phi}(r) = i \ \vert\ {\bf\Phi }\!\restriction_{\Lambda_n} = \tau] = \frac{Z_{i,\tau}({\mathbb{T}}^{n})}{Z_{\tau}{(\mathbb{T}}^{n})}
=
\frac{Z_{i,\tau}({\mathbb{T}}^{n})}{\sum_{j=1}^q Z_{j,\tau}({\mathbb{T}}^{n})}.
\]
Hence for any $i \in [q]$, upon dividing both the numerator and denominator by by $Z_{q,\tau}({\mathbb{T}}^{n})$, we obtain
\[
\PPr_{{\mathbb{T}}^{n}} [ {\bf \Phi}(r) = i \ \vert\ {\bf \Phi}\!\restriction_{ \Lambda_n} = \tau]  = \frac{R_{i,\tau}(\mathbb{T}^{n})}{\sum_{i=1}^{q-1} R_{i,\tau}(\mathbb{T}^{n}) +1}.
\]
Now since the map 
\[
(x_1, \dots, x_{q-1}) \mapsto \max_{i \in [q-1]} \bigg( \bigg \vert \frac{x_i}{\sum_{j=1}^{q-1} x_j +1} - \frac{1}{q}\bigg\vert, \bigg \vert \frac{1}{\sum_{j=1}^{q-1} x_j +1}  - \frac{1}{q}\bigg\vert\bigg)
\]
is continuous and maps $(1, \dots, 1)$ to $0$, it follows that for every $\epsilon>0$ there is a $\delta>0$ such that $|R_{i,\tau}(\mathbb{T}_{n})-1| < \delta$ for all boundary conditions $\tau$ and $i=1.\ldots,q-1$, implies that
\[
\max_{\tau: \Lambda_{n} \to [q]} \bigg\vert \PPr_{{\mathbb{T}}^{n}} [ {\bf \Phi}(r) = i \ \vert\ {\bf \Phi}\!\restriction_{\Lambda_n} = \tau] - \frac{1}{q}\bigg\vert < \epsilon .
\]
We conclude that the conditions of Lemma~\ref{def:uniqueness} are satisfied and hence $\mathbb{T}_d$ has a unique Gibbs measure.
\end{proof}

We next provide a proof for the tree recursion.

\begin{proof}[Proof of Lemma~\ref{lem:treerecursion}]
For readability we omit $d$ from the notation.
We have
\begin{equation}\label{eq:expand}
 R_{i,\tau}(\widehat{\mathbb{T}}^{n}) = \frac{Z_{i,\tau}(\widehat{\mathbb{T}}^{n})}{Z_{q,\tau}(\widehat{\mathbb{T}}^{n})} = \frac{\sum_{l \in [q]  \setminus \{i\}} Z_{l,\tau}({\mathbb{T}}^{n}) + w Z_{i,\tau}({\mathbb{T}}^{n})}{\sum_{l \in [q-1]  } Z_{l,\tau}({\mathbb{T}}^{n}) + w Z_{q,\tau}({\mathbb{T}}^{n})},
\end{equation}
as a factor $w$ is picked up when the unique neighbour of the root vertex is assigned the same color as the root vertex, $\hat{r}_d$, of $\widehat{\mathbb{T}}^{n}$. 
Note that for any color $c \in [q]$ we have $Z_{c,\tau}({\mathbb{T}}^{n}) = \prod_{s=1}^{d} Z_{c,\tau_s}(\widehat{\mathbb{T}}^{n-1})$.
Plugging this in into~\eqref{eq:expand} and dividing the numerator and denominator by $\prod_{s=1}^{d} Z_{q,\tau_s}(\widehat{\mathbb{T}}^{n-1})$, we arrive at the desired expression.
\end{proof}

\end{subsection}
\end{section}

\begin{section}{Proof of the main theorem}\label{sec:proof main}
In this section we use Lemma~\ref{lem: Tab sequence} to prove the \hyperref[thm:main]{Main Theorem}.

\begin{subsection}{Symmetry of the map $F$}
\label{sec: symmetry of F}
In order to find suitable sets $\mathcal{T}$ such that $F(\mathcal{T}) \subseteq \mathcal{T}$ we will exploit a symmetry that the map $F$ exhibits, due to the inherent symmetry of permuting the colors $[q]$ in the Potts model. To make this formal we will define a few self-maps on, and regions of, the spaces $\mathbb{P}^{q-1}_{>0}, \mathbb{R}^{q-1}_{>0}$ and $\mathbb{R}^{q-1}$.
To avoid confusion, we will denote self-maps on and subsets of $\mathbb{P}^{q-1}_{>0}$ with a tilde, self-maps on and subsets of $\mathbb{R}^{q-1}_{>0}$ without additional notation and self-maps on and subsets of $\mathbb{R}^{q-1}$ with a hat. When a self-map or subset is used as an index, we will drop the hat or tilde in the index. 

The three spaces $\mathbb{P}^{q-1}_{>0}, \mathbb{R}^{q-1}_{>0}$ and $\mathbb{R}^{q-1}$ are homeomorphic, with homeomorphisms $\pi: \mathbb{P}_{>0}^{q-1} \to \mathbb{R}_{>0}^{q-1}$ with inverse $\iota$ and $\log: \mathbb{R}_{>0}^{q-1} \to \mathbb{R}^{q-1}$ with inverse $\exp$. We define the self-maps $\hat{G},\hat{F}$ on $\mathbb{R}^{q-1}$ by $\hat{G} = \log \circ\ {G} \circ \exp$ and $\hat{F} = \log \circ\ {F} \circ \exp$. To summarize, we have the following diagram of continuous maps
\begin{equation*}
    \xymatrix{
    \mathbb{P}^{q-1}_{>0} \ar@<.5ex>[r]^-{\pi} \ar[d]^-{\tilde{G}, \tilde{F}}  & \mathbb{R}^{q-1}_{>0} \ar@<.5ex>[r]^-{\log} \ar[d]^-{G, F} \ar@<.5ex>[l]^-{\iota}& \mathbb{R}^{q-1} \ar@<.5ex>[l]^-{\exp} \ar[d]^-{\hat{G}, \hat{F}} 
    \\
    \mathbb{P}^{q-1}_{>0} \ar@<.5ex>[r]^-{\pi} &\mathbb{R}^{q-1}_{>0} \ar@<.5ex>[l]^-{\iota} \ar@<.5ex>[r]^-{\log} & \mathbb{R}^{q-1} \ar@<.5ex>[l]^-{\exp}\,.
    }
\end{equation*}
    
Let $S_q$ denote the symmetric group on $q$ elements. This group acts on $\mathbb{P}^{q-1}_{>0}$ be permuting the entries, which corresponds to permuting the colors in the Potts model. For $\sigma\in S_q$ we denote the map from $\mathbb{P}^{q-1}_{>0}$ to itself corresponding to this action by $\tilde{M}_\sigma$.
We use this action to also define an action on $\mathbb{R}_{>0}^{q-1}$ by letting $M_\sigma(x) = (\pi \circ \tilde{M}_\sigma \circ \iota)(x)$. It is easy to see that the action of $S_q$ on $\mathbb{P}_{>0}^{q-1}$ commutes with $\tilde{G}$ and $\tilde{F}$. It follows that the action of $S_q$ on $\mathbb{R}_{>0}^{q-1}$ also commutes with $F$ and $G$. 
Similarly, we define the map $\hat{M}_\sigma$ on $x \in \mathbb{R}^{q-1}$ by $\hat{M}_\sigma(x) = \left(\log \circ\ M_\sigma \circ \exp\right)(x)$ and we note that this action commutes with $\hat{G}$ and $\hat{F}$.
\begin{example}As an example we present the table of the action of $S_q$ for $q=3$ on a point in all the three coordinates.
\begin{equation*}
\begin{array}{c|cccccc}
    \sigma & \textrm{id} & (12) & (13) & (23) & (123) & (132) \\
    \hline 
    \tilde{M}_\sigma([x:y:z])& [x:y:z] & [y:x:z] &[z:y:x] & [x:z:y] & [z:x:y] & [y:z:x] \\
    M_\sigma(x,y) & (x,y) & (y,x) & (1/x,y/x) & (x/y,1/y) & (1/y,x/y) & (y/x,1/x) \\
    \hat{M}_\sigma(x,y) & (x,y) & (y,x) & (-x,y-x) & (x-y,-y) & (-y,x-y) & (y-x,-x) 
\end{array}
\end{equation*}
\end{example}
\noindent Note that in general $\hat{M}_\sigma$ is a linear map for all $\sigma \in S_q$. In fact, the map $\sigma \mapsto \hat{M}_\sigma$ is an irreducible representation of $S_q$ called the standard representation, but we will not use this.

    
For any permutation $\tau \in S_q$ we define the following subset of $\mathbb{P}_{>0}^{q-1}$
\[
    \tilde{\mathcal{R}}_{\tau} = \left\{[x_1: \dots: x_q] \in \mathbb{P}_{>0}^{q-1}:x_{\tau (1)} \leq x_{\tau (2)} \leq \cdots \leq x_{\tau(k)} \right\}.
\]
Furthermore, we let $\mathcal{R}_\tau = \pi(\tilde{\mathcal{R}}_{\tau})$ and $\hat{\mathcal{R}}_\tau = \log(\mathcal{R}_\tau)$.
Note that if $x\in\mathbb{P}_{>0}^{q-1}$ has the property that $x_i \geq x_j$ then $\mu_{i}(x) \leq \mu_{j}(x)$, recalling that 
\[
    \mu_{i}(x_1,\dots,x_q) = (w -1)x_i + \sum_{\substack{j=1}}^{q} x_j.
\]
It follows that the map $\tilde{G}$ maps $\tilde{\mathcal{R}}_\tau$ into $\tilde{\mathcal{R}}_{\tau \circ m}$, where $m \in S_q$ denotes the permutation with $m(l) = k+1-l$ for $l \in [q]$. The same is true for $\tilde{F}$ because $x \mapsto x^d$ maps any $\tilde{\mathcal{R}}_\tau$ to itself. It follows that $G$ and $F$ map $\mathcal{R}_\tau$ into $\mathcal{R}_{\tau \circ m}$ and that $\hat{G}$ and $\hat{F}$ map $\hat{\mathcal{R}}_\tau$ into $\hat{\mathcal{R}}_{\tau \circ m}$. In Figure \ref{fig:Regions} the regions $\mathcal{R}_\tau$ and $\hat{\mathcal{R}}_\tau$ are depicted when $q =3$.
    
The main purpose of the considerations of this section up until this point is to state and prove the following simple lemma.
\begin{lemma}
    \label{lem: One region is enough}
    Suppose $\mathcal{T} \subseteq \mathbb{R}_{>0}^{q-1}$ is a set such that $M_\sigma(\mathcal{T}) = \mathcal{T}$ for all $\sigma \in S_q$. Suppose also that there is a permutation $\tau\in S_q$ such that
    \[
        F\left(\mathcal{T} \cap \mathcal{R}_{\tau} \right) \subseteq \Int(\mathcal{T}).
    \]
    Then $F(\mathcal{T}) \subseteq \Int(\mathcal{T})$.
\end{lemma}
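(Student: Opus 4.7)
The proof plan rests on two observations: that the regions $\mathcal{R}_\sigma$ ($\sigma\in S_k$) cover $\mathbb{R}_{>0}^{k-1}$, and that the $S_k$-action transitively permutes them while commuting with $F$ and preserving $\mathcal{T}$. Combining these, the assumed inclusion on a single region $\mathcal{R}_\tau$ can be transported to every region.

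My plan is as follows. First, I would verify the covering statement: given any $x\in\mathbb{R}_{>0}^{k-1}$, the lift $\iota(x)=[x_1:\cdots:x_{k-1}:1]\in\mathbb{P}_{>0}^{k-1}$ has some ordering of its $k$ entries, so there exists $\sigma\in S_k$ with $\iota(x)\in\tilde{\mathcal{R}}_\sigma$, hence $x\in\mathcal{R}_\sigma$. Thus $\mathbb{R}_{>0}^{k-1}=\bigcup_{\sigma\in S_k}\mathcal{R}_\sigma$, and consequently $\mathcal{T}=\bigcup_{\sigma\in S_k}(\mathcal{T}\cap\mathcal{R}_\sigma)$.

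Next, I would establish transitivity: for any $\sigma\in S_k$ there is $\alpha\in S_k$ with $M_\alpha(\mathcal{R}_\tau)=\mathcal{R}_\sigma$. This is immediate from the definition of $\tilde{\mathcal{R}}_\tau$, since $\tilde{M}_\alpha$ relabels coordinates and therefore sends the ordering condition indexed by $\tau$ to the one indexed by $\sigma$ for a suitable $\alpha$ (concretely $\alpha=\sigma\tau^{-1}$, up to the convention for the action). Pushing through $\pi$, the same holds on $\mathbb{R}_{>0}^{k-1}$.

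Then I would combine these with the two equivariance facts already proved in the excerpt, namely that $F\circ M_\alpha = M_\alpha\circ F$ on $\mathbb{R}_{>0}^{k-1}$ and that $M_\alpha$ is a homeomorphism with $M_\alpha(\mathcal{T})=\mathcal{T}$ (so $M_\alpha(\operatorname{Int}\mathcal{T})=\operatorname{Int}\mathcal{T}$). For each $\sigma$, pick $\alpha$ with $M_\alpha(\mathcal{R}_\tau)=\mathcal{R}_\sigma$; then
\[
F\bigl(\mathcal{T}\cap\mathcal{R}_\sigma\bigr)
= F\bigl(M_\alpha(\mathcal{T}\cap\mathcal{R}_\tau)\bigr)
= M_\alpha\bigl(F(\mathcal{T}\cap\mathcal{R}_\tau)\bigr)
\subseteq M_\alpha(\operatorname{Int}\mathcal{T})
= \operatorname{Int}\mathcal{T},
\]
using the hypothesis on $\mathcal{R}_\tau$ in the inclusion. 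Taking the union over $\sigma\in S_k$ and invoking the covering from the first step gives $F(\mathcal{T})\subseteq\operatorname{Int}\mathcal{T}$, as desired.

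The argument is essentially bookkeeping with the $S_k$-symmetry, so there is no serious obstacle; the only point that needs a little care is confirming that the action of $S_k$ on the collection $\{\mathcal{R}_\sigma\}_{\sigma\in S_k}$ is transitive with the sign conventions used in the paper, which I would double-check against the explicit $k=3$ example given just before the lemma.
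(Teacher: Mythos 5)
Your proof is correct and follows essentially the same approach as the paper's: both rest on the covering of $\mathbb{R}_{>0}^{k-1}$ by the $\mathcal{R}_\sigma$, the equivariance $F\circ M_\alpha = M_\alpha\circ F$, and the fact that $M_\alpha$ is a homeomorphism preserving $\mathcal{T}$ (hence $\Int\mathcal{T}$). The paper phrases it pointwise (pick $x\in\mathcal{T}$, move it into $\mathcal{R}_\tau$ via some $M_\sigma$, pull back), whereas you phrase it at the level of sets via the decomposition $\mathcal{T}=\bigcup_\sigma(\mathcal{T}\cap\mathcal{R}_\sigma)$ together with transitivity of the action on the $\mathcal{R}_\sigma$; this is only a cosmetic reorganization of the same argument.
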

\begin{proof}
    Let $x \in \mathcal{T}$. There is a $\sigma \in S_q$ such that $M_\sigma(x) \in \mathcal{R}_{\tau}$ and thus $M_\sigma(x) \in \mathcal{T} \cap \mathcal{R}_{\tau}$. It follows from the assumption that $(F\circ M_\sigma)(x) \in \Int(\mathcal{T})$. Because $M_\sigma$ commutes with $F$ we find that $(M_\sigma \circ F)(x) \in \Int(\mathcal{T})$. We conclude that $F(x) \in M_\sigma^{-1}(\Int(\mathcal{T}))=M_{\sigma^{-1}}(\Int(\mathcal{T}))\subseteq \mathcal{T}$. Because $M_\sigma$ is continuous it follows that $M_\sigma^{-1}(\Int(\mathcal{T}))$ is an open subset of $\mathcal{T}$ and hence $F(x) \in \Int(\mathcal{T})$.
\end{proof}
In the next section we will define a family of regions $\mathcal{T}_{a,b}$ for $a,b>1$ with the property $M_\sigma(\mathcal{T}_{a,b})=\mathcal{T}_{a,b}$ for all $\sigma \in S_q$. Our goal will be to show that for certain choices of parameters $(a,b)$ we have $F(\mathcal{T}_{a,b}) \subseteq \Int(T_{a,b})$. Because of Lemma~\ref{lem: One region is enough} it will be enough to restrict ourselves to one well chosen region $\mathcal{R}_{\tau}$. 
\end{subsection}

\begin{subsection}{Definition and properties of the sets $\mathcal{T}_{a,b}$}
\label{sec: regions Tab}

For $q=3$ and $q=4$ we will define a family of log convex sets $\mathcal{T}_{a,b}\subseteq \mathbb{R}_{>0}^{q-1}$ with the property that $M_\sigma(\mathcal{T}_{a,b}) = \mathcal{T}_{a,b}$ for all $\sigma \in S_q$. We will do this by defining the convex sets $\hat{\mathcal{T}}_{a,b} \subseteq \mathbb{R}^{q-1}$ and then letting $\mathcal{T}_{a,b} = \exp(\hat{\mathcal{T}}_{a,b})$.

Let $a,b>1$. To avoid having to write too many logarithms we let $\hat{a} = \log(a)$ and $\hat{b} = \log(b)$. For $q=3$ we define the following half-space of $\mathbb{R}^2$
\[
    \hat{H}_{a,b} = \{(x,y) \in \mathbb{R}^2: -\hat{b}\cdot x + \hat{a}\cdot y \leq \hat{a}\hat{b}\}.
\]
Subsequently, we define 
\[
    \hat{\mathcal{T}}_{a,b} = \bigcup_{\sigma \in S_3}\hat{M}_\sigma\left(\hat{\mathcal{R}}_{(23)} \cap \hat{H}_{a,b}\right).
\]
Similarly, for $q=4$, we define the half-space
\[
    \hat{H}_{a,b} = \{(x,y,z) \in \mathbb{R}^3: -\hat{b}\cdot x + \hat{a}\cdot z \leq \hat{a}\hat{b}\}
\]
and the region 
\[
    \hat{\mathcal{T}}_{a,b} = \bigcup_{\sigma \in S_4}\hat{M}_\sigma\left(\hat{\mathcal{R}}_{(243)} \cap \hat{H}_{a,b}\right).
\]
For both $q=3$ and $q=4$ we let $\mathcal{T}_{a,b} = \exp(\hat{\mathcal{T}}_{a,b})$. Figure \ref{fig:Regions} contains an image of $\hat{\mathcal{T}}_{a,b}$ and $\mathcal{T}_{a,b}$ for $q=3$. Figure \ref{fig:Region3D} contains an image of $\hat{\mathcal{T}}_{a,b}$ for $q=4$; we highlighted the region $\hat{\mathcal{R}}_{(243)} \cap \hat{H}_{a,b}$ in orange. We have chosen to give the sets $\hat{\mathcal{T}}_{a,b}$ the same name for $q=3$ and $q=4$. This is because many of the properties of $\hat{\mathcal{T}}_{a,b}$ that we will prove hold for both cases and are proved in a similar way. Unless otherwise stated one should assume that any statement involving $\hat{\mathcal{T}}_{a,b}$ refers to the corresponding statement for both $q=3$ and $q=4$. 

We first state a basic lemma relating the half-space representation and the vertex representation of a polytope. This lemma will be used a number of times in the remainder of the section to derive useful properties of the sets $\mathcal{T}_{a,b}$.

\begin{lemma}
    \label{lem: half-spaces}
    Let $H_1, \dots, H_n$ be closed half-spaces in $\mathbb{R}^{n-1}$. 
    Furthermore, let $p_1, \dots, p_n \in \mathbb{R}^{n-1}$ with the property that for all $i \in [n]$ we have $p_i \in \Int(H_i)$ and $p_i \in \partial H_j$ for $j \neq i$.
    Then 
    \[
        \bigcap_{i=1}^n H_i = \Conv\left(\{p_1,\dots, p_n\}\right),
    \]
    where $\Conv(S)$ denotes the convex hull of the set $S$.
\end{lemma}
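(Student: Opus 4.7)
The plan is as follows. First I would dispatch the easy inclusion $\textrm{Conv}(\{p_1, \dots, p_n\}) \subseteq \bigcap_i H_i$: each $p_i$ lies in $H_i$ (it is in $\Int(H_i)$) and in every $H_j$ for $j \neq i$ (it lies on $\partial H_j$, hence in the closed half-space), so the $p_i$ all lie in the intersection, which is convex. The substantive direction is the reverse inclusion; my plan is to realise each $H_i$ as a ``barycentric'' half-space of the simplex spanned by $p_1, \dots, p_n$. This requires first verifying that the $p_i$ actually form an $(n-1)$-simplex, i.e.\ are affinely independent in $\mathbb{R}^{n-1}$.

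For affine independence, write $H_i = \{x \in \mathbb{R}^{n-1} : a_i \cdot x \leq b_i\}$ and consider the affine map $\Phi : \mathbb{R}^{n-1} \to \mathbb{R}^n$ sending $x$ to $(b_1 - a_1 \cdot x, \dots, b_n - a_n \cdot x)$. The hypothesis $p_i \in \partial H_j$ for $j \neq i$ forces the $j$-th coordinate of $\Phi(p_i)$ to vanish for $j \neq i$, while $p_i \in \Int(H_i)$ forces the $i$-th coordinate to be strictly positive; thus $\Phi(p_i) = c_i e_i$ with $c_i > 0$, where $e_1, \dots, e_n$ denotes the standard basis of $\mathbb{R}^n$. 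The points $c_1 e_1, \dots, c_n e_n$ are affinely independent in $\mathbb{R}^n$ (the differences $c_i e_i - c_1 e_1$ for $i \geq 2$ are manifestly linearly independent), and since an affine map cannot increase affine dimension, $p_1, \dots, p_n$ are affinely independent in $\mathbb{R}^{n-1}$. I expect this step to be the main obstacle: without affine independence one cannot even sensibly speak of the simplex of vertices, and the whole barycentric framework collapses.

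Once affine independence is in hand, introduce the barycentric coordinate functions $\lambda_1, \dots, \lambda_n$ associated to the simplex $S := \textrm{Conv}(\{p_1, \dots, p_n\})$. These are affine functions on $\mathbb{R}^{n-1}$ satisfying $\lambda_i(p_j) = \delta_{ij}$ and $\sum_i \lambda_i \equiv 1$, and one has $S = \{x : \lambda_i(x) \geq 0 \text{ for all } i\}$. The affine hyperplanes $\{x : \lambda_i(x) = 0\}$ and $\partial H_i$ both contain the same $n-1$ affinely independent points $\{p_j : j \neq i\}$, so they coincide as affine hyperplanes in $\mathbb{R}^{n-1}$. Since $\lambda_i(p_i) = 1 > 0$ and $p_i \in H_i$, the half-space $H_i$ must be the one on which $\lambda_i \geq 0$. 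Intersecting over $i$ therefore yields $\bigcap_i H_i = \{x : \lambda_i(x) \geq 0 \text{ for all } i\} = S$, which completes the proof.
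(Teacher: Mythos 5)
Your proposal is correct and takes essentially the same approach as the paper: establish affine independence of the $p_i$, then identify the intersection of half-spaces with the simplex via affine coordinates adapted to the $p_i$ (your barycentric coordinates $\lambda_1,\dots,\lambda_n$ are exactly the components of the paper's affine map $T$ together with $1-\sum_i T_i$). You actually supply more detail than the paper's sketch, in particular an explicit argument (via the map $\Phi$) for the affine independence that the paper merely asserts.
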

\begin{proof}
We give a sketch of the proof. The conditions on the $p_i$ imply that the set $\{p_1, \dots, p_n\}$ is affinely independent, i.e. the set $\{p_1-p_n, \dots, p_{n-1}-p_n\}$ is linearly independent. Therefore there exists an invertible affine transformation $T$ with $T(v) = M(v-p_n)$ for some invertible linear transformation $M$, such that $T(p_i) = e_i$ for $i \in [n-1]$, where $e_i$ denotes a standard basis vector. From the conditions on the $p_i$ it follows that $T(H_i) = \{x \in \mathbb{R}^{n-1}:  x_i \geq 0 \}$ for $i \in [n-1]$ and $T(H_n) = \{x \in \mathbb{R}^{n-1}: \sum_{i=1}^{n-1} x_i \leq 1\}$. As affine transformations preserve convexity, we see
\begin{align*}
    T\left(\bigcap_{i=1}^n H_i\right)
    &= 
    \bigcap_{i=1}^n T(H_i)
    =
    \bigcap_{i=1}^{n-1} \{x \in \mathbb{R}^{n-1}:  x_i \geq 0 \} \cap \{x \in \mathbb{R}^{n-1}: \sum_{i=1}^{n-1} x_i \leq 1\}\\
    &= \Conv\left(\{e_1,\dots, e_{n-1},0\}\right)
    = \Conv\left(\{T(p_1), \dots, T(p_n)\}\right)
    = T\left(\Conv\left(\{p_1,\dots, p_n\} \right)\right).
\end{align*}
The lemma now follows from the fact that $T$ is invertible.
\end{proof}


\begin{figure}[h!]
\centering
\begin{tikzpicture}
	\draw [draw=black, fill=orange, fill opacity=0.75]
	 (-3.29584, 0.) -- (0., 2.07944) -- (3.29584, 3.29584) -- (2.07944, 0.) -- (0., -3.29584) -- (-2.07944, -2.07944) -- cycle;
	 
	 \draw [dashed] (-3.6, 0.) -- (3.6, 0.);
	 \draw [dashed] (0., -3.6) -- (0., 3.6);
	 \draw [dashed] (-3.6, -3.6) -- (3.6, 3.6);
	 
	 \node [anchor = east,fill=white,inner sep=1pt] at (-3.29584, 0.) {$(-\hat{a},0)$};
	 \node [anchor = east] at (0., 2.07944) {$(0,\hat{b})$};
	 \node [anchor = south east] at (3.29584, 3.29584) {$(\hat{a},\hat{a})$};
	 \node [anchor = west,fill=white, inner sep=0.01pt] at (2.14544, 0.) {$(\hat{b},0)$};
	 \node [anchor = north,fill=white,inner sep=1pt] at (0., -3.29584) {$(0,-\hat{a})$};
	 \node [anchor = east,fill=white,inner sep=0.01pt] at (-2.23944, -2.07944) {$(-\hat{b},-\hat{b})$};
	 
	 \node[circle,draw=black, fill=blue, inner sep=0pt,minimum size=3pt] (a) at (-3.29584, 0.) {};
	 \node[circle,draw=black, fill=blue, inner sep=0pt,minimum size=3pt] (b) at (0., 2.07944) {};
	 \node[circle,draw=black, fill=blue, inner sep=0pt,minimum size=3pt] (c) at (3.29584, 3.29584) {};
	 \node[circle,draw=black, fill=blue, inner sep=0pt,minimum size=3pt] (d) at (2.07944, 0.) {};
	 \node[circle,draw=black, fill=blue, inner sep=0pt,minimum size=3pt] (e) at (0., -3.29584) {};
	 \node[circle,draw=black, fill=blue, inner sep=0pt,minimum size=3pt] (f) at (-2.07944, -2.07944) {};
	 
	 \node at (-2.4, 2.4) {$\hat{\mathcal{R}}_{(2,3)}$};
	 \node at (1.2, 3.) {$\hat{\mathcal{R}}_{(1,3,2)}$};
	 \node at (3., 0.6) {$\hat{\mathcal{R}}_{(1,3)}$};
	 \node at (1.5, -2.4) {$\hat{\mathcal{R}}_{(1,2,3)}$};
	 \node at (-1.8, -3.) {$\hat{\mathcal{R}}_{(1,2)}$};
	 \node at (-3.6, -1.2) {$\hat{\mathcal{R}}_{e}$};
	 
	 \node at (-1.2, 0.5) {$\hat{\mathcal{R}}_{(2,3)} \cap \hat{H}_{a,b}$};

	 
	 \draw [draw=black, fill=orange, fill opacity=0.75]
(4.35, -1.35) -- (4.3837, -1.28674) -- (4.4189, -1.22171) -- (4.45569, -1.15484) -- (4.49413, -1.0861) -- (4.5343, -1.01543) -- (4.57627, -0.942768) -- (4.62013, -0.868062) -- (4.66596, -0.791256) -- (4.71385, -0.71229) -- (4.76389, -0.63111) -- (4.81617, -0.54764) -- (4.87081, -0.461827) -- (4.9279, -0.373601) -- (4.98755, -0.282894) -- (5.04989, -0.189638) -- (5.11502, -0.0937596) -- (5.18308, 0.00481445) -- (5.2542, 0.10616) -- (5.32852, 0.210354) -- (5.40617, 0.317477) -- (5.48731, 0.427613) -- (5.5721, 0.540844) -- (5.66069, 0.657258) -- (5.75327, 0.776946) -- (5.85, 0.9) -- (5.95108, 0.973578) -- (6.0567, 1.04836) -- (6.16704, 1.12436) -- (6.28236, 1.20161) -- (6.4029, 1.28012) -- (6.52884, 1.35992) -- (6.66036, 1.44101) -- (6.79788, 1.52344) -- (6.94152, 1.60721) -- (7.09164, 1.69235) -- (7.24854, 1.77889) -- (7.4124, 1.86684) -- (7.5837, 1.95623) -- (7.76268, 2.04707) -- (7.94964, 2.13941) -- (8.14506, 2.23325) -- (8.34924, 2.32864) -- (8.5626, 2.42557) -- (8.78556, 2.52409) -- (9.01848, 2.62423) -- (9.2619, 2.726) -- (9.5163, 2.82944) -- (9.7821, 2.93456) -- (10.0598, 3.04141) -- (10.35, 3.15) -- (10.2414, 2.8598) -- (10.1345, 2.58207) -- (10.0294, 2.31629) -- (9.92598, 2.06193) -- (9.82422, 1.8185) -- (9.72408, 1.58555) -- (9.62556, 1.36261) -- (9.52866, 1.14925) -- (9.43326, 0.945066) -- (9.33942, 0.749658) -- (9.24708, 0.562655) -- (9.15624, 0.38369) -- (9.06684, 0.21242) -- (8.97888, 0.0485128) -- (8.89236, -0.108347) -- (8.80722, -0.258464) -- (8.72346, -0.402126) -- (8.64102, -0.539612) -- (8.5599, -0.67119) -- (8.4801, -0.797106) -- (8.40162, -0.91761) -- (8.32434, -1.03294) -- (8.24838, -1.1433) -- (8.17356, -1.24892) -- (8.1, -1.35) -- (7.97694, -1.44673) -- (7.85724, -1.53931) -- (7.74084, -1.6279) -- (7.62762, -1.71269) -- (7.51746, -1.79383) -- (7.41036, -1.87148) -- (7.30614, -1.9458) -- (7.2048, -2.01692) -- (7.10622, -2.08498) -- (7.01034, -2.15011) -- (6.9171, -2.21245) -- (6.82638, -2.2721) -- (6.73818, -2.32919) -- (6.65238, -2.38383) -- (6.56892, -2.43611) -- (6.48768, -2.48615) -- (6.40872, -2.53404) -- (6.33192, -2.57987) -- (6.25722, -2.62373) -- (6.18456, -2.6657) -- (6.11388, -2.70587) -- (6.04518, -2.74431) -- (5.97829, -2.7811) -- (5.91326, -2.8163) -- (5.85, -2.85) -- (5.78848, -2.83774) -- (5.72863, -2.82527) -- (5.67042, -2.81261) -- (5.6138, -2.79973) -- (5.55874, -2.78665) -- (5.50517, -2.77335) -- (5.45308, -2.75983) -- (5.40241, -2.74609) -- (5.35312, -2.73213) -- (5.30518, -2.71794) -- (5.25855, -2.70352) -- (5.2132, -2.68886) -- (5.16908, -2.67396) -- (5.12618, -2.65882) -- (5.08445, -2.64343) -- (5.04385, -2.62779) -- (5.00437, -2.6119) -- (4.96597, -2.59574) -- (4.92862, -2.57932) -- (4.89229, -2.56263) -- (4.85695, -2.54567) -- (4.82258, -2.52843) -- (4.78915, -2.5109) -- (4.75663, -2.4931) -- (4.725, -2.475) -- (4.7069, -2.44337) -- (4.6891, -2.41085) -- (4.67157, -2.37742) -- (4.65433, -2.34305) -- (4.63737, -2.30771) -- (4.62068, -2.27138) -- (4.60426, -2.23403) -- (4.5881, -2.19563) -- (4.57221, -2.15615) -- (4.55657, -2.11555) -- (4.54118, -2.07382) -- (4.52604, -2.03092) -- (4.51114, -1.9868) -- (4.49648, -1.94145) -- (4.48206, -1.89482) -- (4.46787, -1.84688) -- (4.45391, -1.79759) -- (4.44017, -1.74692) -- (4.42665, -1.69483) -- (4.41335, -1.64126) -- (4.40027, -1.5862) -- (4.38739, -1.52958) -- (4.37473, -1.47137) -- (4.36226, -1.41152) -- cycle;

\draw [dashed] (3.6, -1.35) -- (10.8, -1.35);
\draw [dashed] (5.85, -3.6) -- (5.85, 3.6);
\draw [dashed] (3.6, -3.6) -- (10.8, 3.6);

	 \node [anchor = east,fill=white,inner sep=1pt] at (4.35, -1.35) {$(1/a,1)$};
	 \node [anchor = east] at (5.85, 0.9) {$(1,b)$};
	 \node [anchor = south east] at (10.35, 3.15) {$(a,a)$};
	 \node [anchor = west,fill=white, inner sep=0.01pt] at (8.19, -1.35) {$(b,1)$};
	 \node [anchor = north,fill=white,inner sep=1pt] at (5.85, -2.85) {$(1,1/a)$};
	 \node [anchor = east,fill=white,inner sep=0.01pt] at (4.635, -2.475) {$(1/b,1/b)$};
	 
	 \node[circle,draw=black, fill=blue, inner sep=0pt,minimum size=3pt] at (4.35, -1.35) {};
	 \node[circle,draw=black, fill=blue, inner sep=0pt,minimum size=3pt] at (5.85, 0.9) {};
	 \node[circle,draw=black, fill=blue, inner sep=0pt,minimum size=3pt] at (10.35, 3.15) {};
	 \node[circle,draw=black, fill=blue, inner sep=0pt,minimum size=3pt] at (8.1, -1.35) {};
	 \node[circle,draw=black, fill=blue, inner sep=0pt,minimum size=3pt] at (5.85, -2.85) {};
	 \node[circle,draw=black, fill=blue, inner sep=0pt,minimum size=3pt] at (4.725, -2.475) {};
	 
	 \node at (4.725, 0.5) {$\mathcal{R}_{(2,3)}$};
	 \node at (6.975, 2.125) {$\mathcal{R}_{(1,3,2)}$};
	 \node at (9.425, -0.225) {$\mathcal{R}_{(1,3)}$};
	 \node at (8.1, -2.475) {$\mathcal{R}_{(1,2,3)}$};
	 \node at (4.775, -3.325) {$\mathcal{R}_{(1,2)}$};
	 \node at (4.275, -2.025) {$\mathcal{R}_{e}$};
	\end{tikzpicture}

\caption{Images for $q=3$ of $\hat{\mathcal{T}}_{a,b}$ on the left and $\mathcal{T}_{a,b}$ on the right. The boundaries of the regions $\hat{\mathcal{R}}_\tau$ and $\mathcal{R}_\tau$ are drawn with dashed lines.}
\label{fig:Regions}
\end{figure}
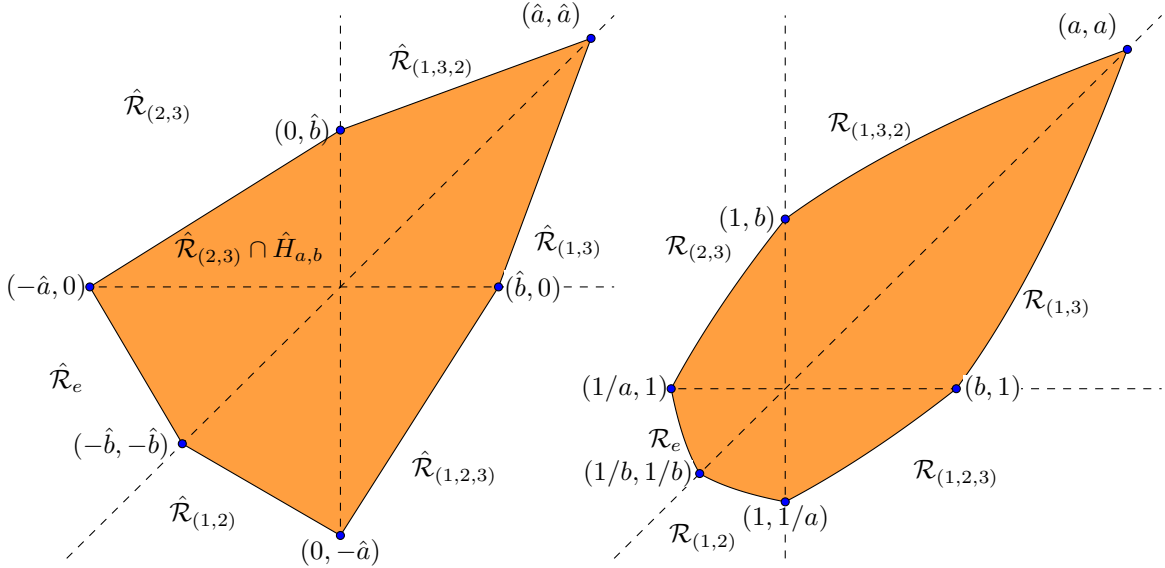

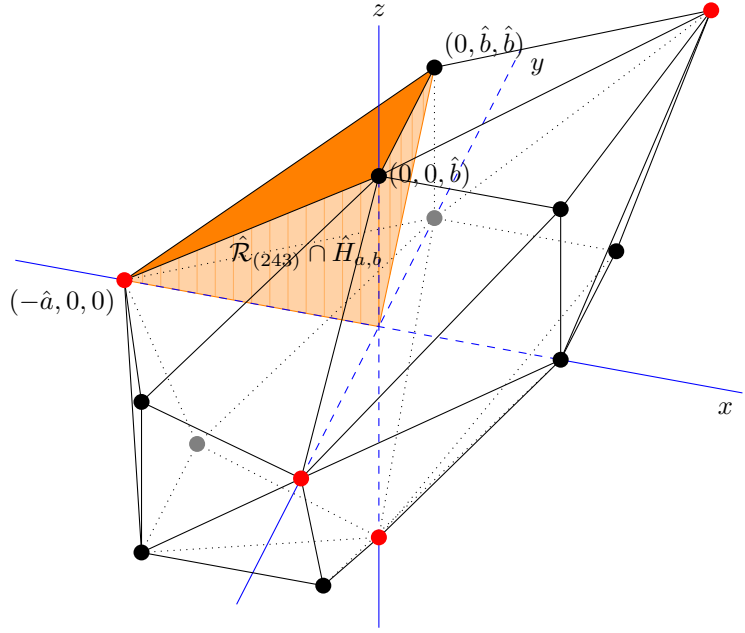
\begin{figure}[h!]
    \centering
    \tdplotsetmaincoords{53}{107}
\begin{tikzpicture}[tdplot_main_coords,font=\sffamily]
\fill[orange] (0., -3.5, 0.)-- (-2.5,0,2.5) -- (0,0,2.5);

\fill[orange, opacity=0.35] (0., 0, 0.)-- (-2.5,0,2.5) -- (0,0,2.5);

\fill[orange, opacity=0.35] (0., -3.5, 0.)-- (0,0,0) -- (0,0,2.5);

\draw[orange, opacity=0.4 ,-] (0., -3, 0.) -- (0., -3, 0.5);
\draw[orange, opacity=0.4 ,-] (0., -2.5, 0.) -- (0., -2.5, 1);
\draw[orange, opacity=0.4 ,-] (0., -2, 0.) -- (0., -2, 1.5);
\draw[orange, opacity=0.4 ,-] (0., -1.5, 0.) -- (0., -1.5, 2);
\draw[orange, opacity=0.4 ,-] (0., -1, 0.) -- (0., -1, 2.5);
\draw[orange, opacity=0.4 ,-] (0., -0.5, 0.) -- (0., -0.5, 2.5);

\draw[orange, opacity=0.4 ,-] (0., -3-0.25, 0.) -- (0., -3-0.25, 0.2);
\draw[orange, opacity=0.4 ,-] (0., -2.5-0.25, 0.) -- (0., -2.5-0.25, 0.6);
\draw[orange, opacity=0.4 ,-] (0., -2-0.25, 0.) -- (0., -2-0.25, 1.2);
\draw[orange, opacity=0.4 ,-] (0., -1.5-0.25, 0.) -- (0., -1.5-0.25, 1.6);
\draw[orange, opacity=0.4 ,-] (0., -1-0.25, 0.) -- (0., -1-0.25, 2.1);
\draw[orange, opacity=0.4 ,-] (0., -0.5-0.25, 0.) -- (0., -0.5-0.25, 2.5);
\draw[orange, opacity=0.4 ,-] (0., 0-0.25, 0.) -- (0., 0-0.25, 2.5);

\draw[orange, opacity=0.4 ,-] (-2, 0, 2) -- (-2,0,2.5);
\draw[orange, opacity=0.4 ,-] (-1.5, 0, 1.5) -- (-1.5,0,2.5);
\draw[orange, opacity=0.4 ,-] (-1, 0, 1) -- (-1,0,2.5);
\draw[orange, opacity=0.4 ,-] (-0.5, 0, 0.5) -- (-0.5,0,2.5);

\draw[orange,-] (0, 0, 0.) -- (-2.5,0,2.5);
\draw[orange,-] (0, 0, 0.) -- (0,-3.5,0);
\draw[orange,-] (0, 0, 0.) -- (0,0,2.5);

\draw[blue,-] (0,-5,0) -- (0,-3.5,0);
\draw[blue,dashed,-] (0,-3.5,0) -- (0,2.5,0);
\draw[blue,-] (0,2.5,0) -- (0,5,0) node[black,anchor=north east]{$x$};

\draw[blue,-] (0,0,-5) -- (0,0,-3.5);
\draw[blue,dashed,-] (0,0,-3.5) -- (0,0,2.5);

\draw[blue,dashed,-] (3.5,0,0) -- (-6.4,0,0) node[black,anchor=north west]{$y$};
\draw[blue,-] (3.5,0,0) -- (6.4,0,0);

\draw[black,-] (-2.5, 0., 2.5) -- (0., 0., 2.5);
\draw[black,-] (0., 2.5, 2.5) -- (0., 0., 2.5);
\draw[black,-] (-2.5, 2.5, 0.) -- (0., 2.5, 0.);
\draw[dotted,black,-] (-2.5, 2.5, 0.) -- (-2.5, 0., 0.);
\draw[black,-] (0., 2.5, 2.5) -- (0., 2.5, 0.);
\draw[black,-] (2.5, -2.5, 0.) -- (0., 0., 2.5);
\draw[dotted,black,-] (0., -2.5, -2.5) -- (2.5, -2.5, -2.5);
\draw[dotted,black,-] (-2.5, 0., 2.5) -- (-2.5, 0., 0.);
\draw[black,-] (2.5, 0., -2.5) -- (2.5, -2.5, -2.5);
\draw[dotted,black,-] (2.5, 0., -2.5) -- (0., 2.5, 0.);
\draw[dotted,black,-] (0., -2.5, -2.5) -- (-2.5, 0., 0.);
\draw[black,-] (2.5, -2.5, 0.) -- (2.5, -2.5, -2.5);
\draw[black,-] (2.5, 0., -2.5) -- (0., 0., -3.5);
\draw[dotted,black,-] (-2.5, 0., 0.) -- (0., 0., -3.5);
\draw[black,-] (0., 0., 2.5) -- (3.5, 0., 0.);
\draw[black,-] (2.5, 0., -2.5) -- (3.5, 0., 0.);
\draw[dotted,black,-] (0., -2.5, -2.5) -- (0., 0., -3.5);
\draw[dotted,black,-] (2.5, -2.5, -2.5) -- (0., 0., -3.5);
\draw[black,-] (2.5, -2.5, 0.) -- (3.5, 0., 0.);
\draw[black,-] (2.5, -2.5, -2.5) -- (3.5, 0., 0.);
\draw[black,-] (0., 2.5, 0.) -- (0., 0., -3.5);
\draw[dotted,black,-] (-2.5, 2.5, 0.) -- (0., 0., -3.5);
\draw[black,-] (0., 2.5, 0.) -- (3.5, 0., 0.);
\draw[black,-] (0., 2.5, 2.5) -- (3.5, 0., 0.);
\draw[black,-] (0., 0., 2.5) -- (0., -3.5, 0.);
\draw[dotted,black,-] (-2.5, 0., 0.) -- (0., -3.5, 0.);
\draw[black,-] (-2.5, 0., 2.5) -- (0., -3.5, 0.);
\draw[black,-] (0., 0., 2.5) -- (-3.5, 3.5, 3.5);
\draw[dotted,black,-] (-2.5, 0., 0.) -- (-3.5, 3.5, 3.5);
\draw[black,-] (-2.5, 0., 2.5) -- (-3.5, 3.5, 3.5);
\draw[dotted,black,-] (0., -2.5, -2.5) -- (0., -3.5, 0.);
\draw[black,-] (2.5, -2.5, 0.) -- (0., -3.5, 0.);
\draw[black,-] (2.5, -2.5, -2.5) -- (0., -3.5, 0.);
\draw[black,-] (0., 2.5, 0.) -- (-3.5, 3.5, 3.5);
\draw[black,-] (0., 2.5, 2.5) -- (-3.5, 3.5, 3.5);
\draw[black,-] (-2.5, 2.5, 0.) -- (-3.5, 3.5, 3.5);

\draw[blue,-] (0,0,2.5) -- (0,0,5) node[black,anchor=south]{$z$};

\fill[black] (0., 0., 2.5) circle (3pt);
\fill[black] (-2.5, 0., 2.5) circle (3pt);
\fill[black] (0., 2.5, 2.5) circle (3pt);
\fill[black] (0., 2.5, 0.) circle (3pt);
\fill[black] (-2.5, 2.5, 0.) circle (3pt);
\fill[gray] (-2.5, 0., 0.) circle (3pt);
\fill[black] (2.5, 0., -2.5) circle (3pt);
\fill[gray] (0., -2.5, -2.5) circle (3pt);

\fill[black] (2.5, -2.5, 0.) circle (3pt);
\fill[black] (2.5, -2.5, -2.5) circle (3pt);

\fill[red] (0., -3.5, 0.) circle (3pt);
\fill[red] (3.5, 0., 0.) circle (3pt);
\fill[red] (0., 0., -3.5) circle (3pt);
\fill[red] (-3.5, 3.5, 3.5) circle (3pt);

\node [below left] at (0, -3.5, 0.) {$(-\hat{a}, 0, 0)$};

\node [above right] at (-2.5,0,2.5) {$(0,\hat{b}, \hat{b})$};
\node [right] at (0,0,2.55) {$(0,0, \hat{b})$};

\node at (0, -1, 1) {$\hat{\mathcal{R}}_{(243)} \cap \hat{H}_{a,b}$};

\end{tikzpicture}
    \caption{Image for $q=4$ of $\hat{\mathcal{T}}_{a,b}$. The red dots depend on $\hat{a}$ while the black dots depend on $\hat{b}$. In orange the region $\hat{\mathcal{R}}_{(243)} \cap \hat{H}_{a,b}$ is depicted.}
    \label{fig:Region3D}
\end{figure}

\begin{lemma}\label{lem:convexity}
    For $a,b \in \mathbb{R}_{>1}$ with $b\leq a \leq b^2$ we have that $\hat{\mathcal{T}}_{a,b}$ is convex, or equivalently, that $\mathcal{T}_{a,b}$ is log convex.
\end{lemma}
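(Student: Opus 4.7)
The plan is to show that under the hypothesis $b \leq a \leq b^2$, the set $\hat{\mathcal{T}}_{a,b}$ coincides with the intersection of half-spaces
\[
\hat{\mathcal{T}}_{a,b} \;=\; \bigcap_{\sigma \in S_k} \hat{M}_\sigma(\hat{H}_{a,b}),
\]
from which convexity is manifest. Throughout, let $\tau_0 = (23)$ for $k=3$ and $\tau_0 = (243)$ for $k=4$, so that $\hat{\mathcal{T}}_{a,b}$ is by definition the union of the $|S_k|$ images of the simplex $\hat{\mathcal{R}}_{\tau_0} \cap \hat{H}_{a,b}$.

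The inclusion $\supseteq$ is essentially formal and requires no hypothesis on $a,b$. As already observed in the paper, the chambers $\{\hat{M}_\sigma(\hat{\mathcal{R}}_{\tau_0})\}_{\sigma \in S_k}$ tile $\mathbb{R}^{k-1}$; any point $p$ in the right-hand intersection lies in some chamber $\hat{M}_{\sigma_0}(\hat{\mathcal{R}}_{\tau_0})$ and, by the defining assumption, also in $\hat{M}_{\sigma_0}(\hat{H}_{a,b})$. Hence $p \in \hat{M}_{\sigma_0}(\hat{\mathcal{R}}_{\tau_0} \cap \hat{H}_{a,b}) \subseteq \hat{\mathcal{T}}_{a,b}$.

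For the inclusion $\subseteq$, using the $S_k$-equivariance of both sides the task reduces to proving that for every $\gamma \in S_k$,
\[
\hat{M}_\gamma\!\left(\hat{\mathcal{R}}_{\tau_0} \cap \hat{H}_{a,b}\right) \;\subseteq\; \hat{H}_{a,b}.
\]
First I would compute the vertices of the base simplex explicitly: the triangle with vertices $\mathbf{0},(0,\hat{b}),(-\hat{a},0)$ for $k=3$, and the tetrahedron with vertices $\mathbf{0},(0,0,\hat{b}),(0,\hat{b},\hat{b}),(-\hat{a},0,0)$ for $k=4$. Since $\hat{H}_{a,b}$ is a half-space and each $\hat{M}_\gamma$ is linear, it then suffices to verify the inclusion at the images of these vertices. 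The origin is fixed by every $\hat{M}_\gamma$ and is trivially in $\hat{H}_{a,b}$, so the verification collapses to checking that the full $S_k$-orbit of each non-origin vertex lies in $\hat{H}_{a,b}$. Each of the resulting inequalities takes the form $c_1 \hat{a} + c_2 \hat{b} \leq \hat{a}\hat{b}$ for small integers $c_1, c_2$, and I expect each to reduce to an elementary manipulation of $0 < \hat{b} \leq \hat{a} \leq 2\hat{b}$ (the logarithmic form of $b \leq a \leq b^2$), with the two endpoints saturating different inequalities and thereby confirming sharpness of the hypothesis.

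The main obstacle is combinatorial rather than analytic. For $k=3$ there are only two non-origin vertex orbits, each of size $3$, giving six inequalities that can be read off directly (the condition $b \leq a$ saturates at the orbit point $(-\hat{b},-\hat{b})$ of $(0,\hat{b})$, while $a \leq b^2$ saturates at the orbit point $(\hat{a},\hat{a})$ of $(-\hat{a},0)$). For $k=4$ there are three non-origin vertex orbits of sizes $4$, $6$, $4$ (arising from $(0,0,\hat{b}), (0,\hat{b},\hat{b}), (-\hat{a},0,0)$), giving fourteen candidate points to check; however the stabilizer of each orbit point (of order $6$, $4$, $6$ respectively) cuts this down considerably, so I expect only a handful of genuinely distinct inequalities to appear, each verifiable by a one-line calculation. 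A useful sanity check in the $k=3$ case is that one may alternatively verify convexity by a direct cross-product argument at the six candidate hexagon vertices, confirming the same two saturation points.
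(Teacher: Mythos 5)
Your proposal follows essentially the same route as the paper: identify $\hat{\mathcal{T}}_{a,b}$ with $\bigcap_{\sigma\in S_k}\hat{M}_\sigma(\hat{H}_{a,b})$, argue $\supseteq$ via the chamber decomposition, and reduce $\subseteq$ to checking that the $S_k$-orbit of the base simplex's vertices lies in the single half-space $\hat{H}_{a,b}$, exactly as the paper does (the paper invokes its Lemma~\ref{lem: half-spaces} to justify the explicit vertex list, which you write down directly). One small inaccuracy in your parenthetical: for $k=3$ the binding constraint at the orbit point $(-\hat{b},-\hat{b})$ of $(0,\hat{b})$ is $\hat b^2-\hat a\hat b\le \hat a\hat b$, i.e.\ $\hat b\le 2\hat a$, i.e.\ $\sqrt{b}\le a$, not $b\le a$; so the lower endpoint of the stated hypothesis is not sharp for $k=3$ (and indeed the paper records the weaker condition $\sqrt{b}\le a\le b^2$ there), while it is sharp for $k=4$ at the point $(-\hat b,-\hat b,0)$. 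Since $b\le a$ implies $\sqrt b\le a$ for $b>1$, this does not affect the correctness of your argument, only the claimed sharpness.
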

\begin{proof}
    Recall that we let $\hat{a} = \log(a)$ and $\hat{b} = \log(b)$ and observe that these are two positive real numbers.
    Also recall that the action of $S_q$ on $\mathbb{R}^{q-1}$ is given by linear maps. It follows that the half-space $\hat{H}_{a,b}$ gets mapped to a half-space by $\hat{M}_\sigma$ for any $\sigma \in S_q$. We will show that for the choices of parameters stated in the lemma we have 
    \begin{equation}
        \label{eq: intersection half-spaces}
        \hat{\mathcal{T}}_{a,b} = \bigcap_{\sigma\in S_q}\hat{M}_\sigma\left(\hat{H}_{a,b}\right)
    \end{equation}
    for both $q=3$ and $q=4$. This equality implies that $\hat{\mathcal{T}}_{a,b}$ is convex because an intersection of half-spaces is convex. In fact, it implies that $\hat{\mathcal{T}}_{a,b}$ is a convex polytope. 
    
    We will first prove that the right-hand side of (\ref{eq: intersection half-spaces}) is contained in the left-hand side. To that effect take an element $x \in \bigcap_{\sigma\in S_q}\hat{M}_\sigma\left(\hat{H}_{a,b}\right)$. Because the collection $\{\hat{\mathcal{R}}_{\sigma}\}_{\sigma\in S_q}$ covers $\mathbb{R}^{q-1}$, there is a $\tau \in S_q$ such that $x \in \hat{\mathcal{R}}_{\tau}$. For $q = 3$ let $\sigma \in S_3$ such that $\sigma \cdot (23) = \tau$. We see that $x \in \hat{\mathcal{R}}_{\tau}\cap \hat{M}_\sigma(\hat{H}_{a,b})$ and thus $x \in \hat{M}_\sigma \left(\hat{\mathcal{R}}_{(23)}\cap \hat{H}_{a,b}\right)$, from which it follows $x \in \hat{\mathcal{T}}_{a,b}$. Similarly, for $q=4$ we let $\sigma \in S_4$ such that $\sigma \cdot (243) = \tau$. It follows in exactly the same way that $x \in \hat{\mathcal{T}}_{a,b}$.
    
    The proof that the left-hand side of (\ref{eq: intersection half-spaces}) is contained in the right-hand side is slightly more involved. Assume that $q=3$. We first show that
    \begin{equation}
    \label{eq: convhul1}
        \hat{\mathcal{R}}_{(23)} \cap \hat{H}_{a,b} = \Conv\left(\{(0,0),(-\hat{a},0),(0,\hat{b})\}\right),
    \end{equation}
    While this is easily seen to be true from Figure~\ref{fig:Regions}, we provide a formal proof. Note that $\hat{\mathcal{R}}_{(23)}$ is the intersection of $\hat{H}_{x\leq 0} = \{(x,y) \in \mathbb{R}^2: x \leq 0\}$ and $\hat{H}_{y \geq 0} = \{(x,y) \in \mathbb{R}^2: y \geq 0\}$. One can check that $(0,0) \in \partial \hat{H}_{x\leq 0} \cap \partial \hat{H}_{y\geq 0}\cap\Int(\hat{H}_{a,b})$,
        $(-\hat{a},0) \in \partial \hat{H}_{a,b} \cap \partial \hat{H}_{y\geq 0}\cap\Int(\hat{H}_{x\leq 0} )$
    and $(0,\hat{b}) \in \partial \hat{H}_{a,b} \cap \partial \hat{H}_{x\leq 0}\cap\Int(\hat{H}_{y\geq 0} )$. Equation~{(\ref{eq: convhul1})} then follows from Lemma~\ref{lem: half-spaces}.

    
    We obtain
    \[
        \hat{\mathcal{T}}_{a,b} 
        =
        \bigcup_{\sigma \in S_3}\hat{M}_\sigma\left(\Conv\left(\{(0,0),(-\hat{a},0),(0,\hat{b})\}\right)\right)
        =
        \bigcup_{\sigma \in S_3}\Conv\left(\{(0,0),\hat{M}_\sigma(-\hat{a},0),\hat{M}_\sigma(0,\hat{b})\}\right).
    \]
    We want to show that this is a subset of $\bigcap_{\sigma\in S_3}\hat{M}_\sigma\left(\hat{H}_{a,b}\right)$. Because all these half-spaces are convex, it is enough to show that the set $P = \{(0,0)\} \cup \bigcup_{\sigma \in S_3} \{\hat{M}_\sigma(-\hat{a},0),\hat{M}_\sigma(0,\hat{b})\}$ is a subset of $\hat{M}_\tau(H_{a,b})$ for all $\tau \in S_3$. Because the set $P$ is invariant under the action of $S_3$ it is sufficient to show that $P \subseteq \hat{H}_{a,b}$. We can calculate $P$ explicitly to obtain
    \[
        P = \{(0,0),(0, -\hat{a}), (-\hat{a}, 0), (\hat{a}, \hat{a}),(0, \hat{b}), (\hat{b}, 0),(-\hat{b}, -\hat{b})\}.
    \]
    To check that these points lie in $\hat{H}_{a,b}$ we have to check that for each $(x,y) \in P$ we have $-\hat{b}\cdot x + \hat{a}\cdot y \leq \hat{a}\hat{b}$. The inequality is trivially true for all but the points $(\hat{a}, \hat{a})$ and $(-\hat{b}, -\hat{b})$. One can confirm that the inequalities obtained by filling in these two points are simultaneously satisfied if and only if $\hat{b}/2 \leq \hat{a} \leq 2\hat{b}$. Because $\hat{a} = \log(a)$ and $\hat{b}= \log(b)$ this is equivalent to $\sqrt{b} \leq a \leq b^2$. This shows that for these choices of $a$ and $b$ the left-hand side of (\ref{eq: intersection half-spaces}) is contained in the right-hand side, which concludes the proof for $q=3$.
    
    The proof for $q=4$ follows the same path. One can show in very similar way to the $q=3$ case that
    \[
        \hat{\mathcal{R}}_{(243)} \cap \hat{H}_{a,b} = \Conv\left(\{(0,0,0),(-\hat{a},0,0),(0,0,\hat{b}),(0,\hat{b},\hat{b})\}\right)
    \]
    and thus that 
    \[
        \hat{\mathcal{T}}_{a,b} 
        =
        \bigcup_{\sigma \in S_4}\Conv\left(\{(0,0,0),\hat{M}_\sigma(-\hat{a},0,0),\hat{M}_\sigma(0,0,\hat{b}),\hat{M}_\sigma(0,\hat{b},\hat{b})\}\right).
    \]
    It is again sufficient to show that $P = \{(0,0,0)\} \cup \bigcup_{\sigma \in S_4} \{\hat{M}_\sigma(-\hat{a},0,0),\hat{M}_\sigma(0,0,\hat{b}),\hat{M}_\sigma(0,\hat{b},\hat{b})\}$ is a subset of $\hat{H}_{a,b}$. Explicitly we have \begin{align*}
        P = \{&(0,0,0),(-\hat{a}, 0, 0), (0, -\hat{a}, 0), (0, 0, -\hat{a}), (\hat{a}, \hat{a}, \hat{a}), (\hat{b}, 0, 0),(0, \hat{b}, 0), (0, 0, \hat{b}),\\ 
        &(0, \hat{b}, \hat{b}),(\hat{b}, 0, \hat{b}),(\hat{b}, \hat{b}, 0), (0, -\hat{b}, -\hat{b}),(-\hat{b}, 0, -\hat{b}), (-\hat{b}, -\hat{b}, 0), (-\hat{b}, -\hat{b}, -\hat{b})\}.
    \end{align*}
    We need to check that for $(x,y,z) \in P$ we have $(x,y,z) \in \hat{H}_{a,b}$, that is, we have $-\hat{b}\cdot x + \hat{a}\cdot z \leq \hat{a}\hat{b}$. This inequality holds simultaneously for the points $(-\hat{b},-\hat{b},0)$ and $(\hat{a},\hat{a},\hat{a})$ if and only $\hat{b} \leq \hat{a} \leq 2\hat{b}$. It can be seen that the inequalities obtained from the other points also hold for this regime of parameters. This concludes the proof that the left-hand side of (\ref{eq: intersection half-spaces}) is contained in the right-hand side for $q=4$, which is the final thing that we needed to show to prove the lemma.
\end{proof}

Lemma~\ref{lem: One region is enough} states that it is enough to understand $F(\mathcal{T}_{a,b} \cap \mathcal{R}_\sigma)$ for a specific $\sigma$ to understand the whole image $F(\mathcal{T}_{a,b})$. In the following two lemmas we calculate $\mathcal{T}_{a,b} \cap \mathcal{R}_\sigma$ more explicitly for $\sigma = (123)$ and $\sigma=(134)$ for $q=3$ and $q=4$ respectively. Because $F$ maps $\mathcal{R}_{(123)}$ into $\mathcal{R}_{(23)}$ for $q=3$ and $\mathcal{R}_{(134)}$ into $\mathcal{R}_{(243)}$ for $q=4$, we describe $\mathcal{T}_{a,b} \cap \mathcal{R}_\sigma$ for these instances of $\sigma$ too. The choice for these specific permutations $\sigma$ is arbitrary, but does seem to make the upcoming analysis more pleasant than for some other choices.

\begin{lemma}
    \label{lem: region equations}
    Let $a,b \in \mathbb{R}_{>1}$ and define 
    \[
        l_{a,b}(x) = b\cdot x^{\log(b)/\log(a)}
    \]
    For $q = 3$ we have 
    \[
        \mathcal{T}_{a,b} \cap \mathcal{R}_{(23)} = \{(x,y) \in \mathbb{R}_{>0}^2: x \leq 1\leq y \leq l_{a,b}(x)\}
    \]
    and
    \[
        \mathcal{T}_{a,b} \cap \mathcal{R}_{(123)} = \{(x,y) \in \mathbb{R}_{>0}^2: y \leq 1 \leq x \leq l_{a,b}(y)\}.
    \]
    For $q=4$ we have 
    \[
        \mathcal{T}_{a,b} \cap \mathcal{R}_{(243)} = \{(x,y,z) \in \mathbb{R}_{>0}^3: x \leq 1 \leq y \leq z \leq l_{a,b}(x)\}.
    \]
    and
    \[
        \mathcal{T}_{a,b} \cap \mathcal{R}_{(134)} = \{(x,y,z) \in \mathbb{R}_{>0}^3: z \leq y \leq 1 \leq x \leq y \cdot l_{a,b}(z/y)\}.
    \]
\end{lemma}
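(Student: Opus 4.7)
The plan is to work in logarithmic coordinates, where each $\hat{M}_\sigma$ acts linearly and the claimed sets are polytopes. Let $\hat{T}_0 := \hat{\mathcal{R}}_{\tau_0} \cap \hat{H}_{a,b}$ denote the base polytope, with $\tau_0 = (23)$ for $k=3$ and $\tau_0 = (243)$ for $k=4$. For each of the four regions $\hat{\mathcal{R}}_\rho$ appearing in the lemma there is a unique $\sigma_\rho \in S_k$ with $\sigma_\rho \tau_0 = \rho$, namely $\sigma_\rho = \mathrm{id}$ when $\rho = \tau_0$, $\sigma_\rho = (12)$ when $\rho = (123)$, and $\sigma_\rho = (13)(24)$ when $\rho = (134)$. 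A short computation using $\log l_{a,b}(x) = \hat{b} + (\hat{b}/\hat{a})\hat{x}$ together with the explicit formulas for $\hat{M}_{(12)}$ and $\hat{M}_{(13)(24)}(\hat{x},\hat{y},\hat{z}) = (\hat{z}-\hat{y},-\hat{y},\hat{x}-\hat{y})$ shows that, after taking logarithms, each claimed right-hand side is precisely the image $\hat{M}_{\sigma_\rho}(\hat{T}_0)$, using the vertex description of $\hat{T}_0$ obtained in the proof of Lemma~\ref{lem:convexity}.

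Given this, the inclusion $\supseteq$ is immediate: $\hat{M}_{\sigma_\rho}(\hat{T}_0) \subseteq \hat{\mathcal{T}}_{a,b}$ by definition, and $\hat{M}_{\sigma_\rho}(\hat{T}_0) \subseteq \hat{\mathcal{R}}_\rho$ because $\sigma_\rho \tau_0 = \rho$. For the opposite inclusion, take $x \in \hat{\mathcal{T}}_{a,b} \cap \hat{\mathcal{R}}_\rho$; then $x \in \hat{M}_\sigma(\hat{T}_0) \subseteq \hat{\mathcal{R}}_{\sigma\tau_0}$ for some $\sigma \in S_k$, and if $\sigma \neq \sigma_\rho$ the regions $\hat{\mathcal{R}}_\rho$ and $\hat{\mathcal{R}}_{\sigma\tau_0}$ have disjoint interiors, forcing $x$ onto their shared boundary. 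The key geometric observation is that each non-origin vertex of $\hat{T}_0$ has two coordinates equal among $x_1, \dots, x_k$ (normalised with $x_k = 1$), corresponding to one of the adjacent transpositions of the $\tau_0$-ordering; such a vertex is therefore fixed pointwise by the associated coordinate swap. Consequently the neighbouring image $\hat{M}_\sigma(\hat{T}_0)$ matches $\hat{M}_{\sigma_\rho}(\hat{T}_0)$ exactly on the shared codimension-one face of $\hat{\mathcal{R}}_{\sigma\tau_0}$ and $\hat{\mathcal{R}}_\rho$; and for $\sigma$ with $\sigma\tau_0$ not adjacent to $\rho$, the intersection $\hat{\mathcal{R}}_\rho \cap \hat{\mathcal{R}}_{\sigma\tau_0}$ has codimension at least two and reduces to a vertex or edge already contained in $\hat{M}_{\sigma_\rho}(\hat{T}_0)$.

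This reduces the proof to a finite case analysis: two adjacent regions for $k=3$ and three for $k=4$, for each value of $\rho$, together with the trivial non-adjacent cases. Each individual check is a one-line verification that the vertices of $\hat{T}_0$ on the corresponding face are fixed by the right coordinate swap. The main technical obstacle is the bookkeeping for $k=4$, but the face structure of $\hat{T}_0$ is explicit from the vertex list in Lemma~\ref{lem:convexity}, so no step requires a new idea; the remaining computations are purely linear-algebraic and can be dispatched by computing the images of the vertices of $\hat{T}_0$ under a handful of permutations and confirming they lie in $\hat{M}_{\sigma_\rho}(\hat{T}_0)$.
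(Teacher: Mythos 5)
Your plan follows essentially the same route as the paper's proof: recognize the claimed set as the image $\hat{M}_{\sigma_\rho}(\hat{T}_0)$ of the base polytope $\hat{T}_0 := \hat{\mathcal{R}}_{\tau_0}\cap\hat{H}_{a,b}$, then read off the inequalities in logarithmic coordinates. The paper simply states, with a ``Recall,'' that $\hat{\mathcal{T}}_{a,b}\cap\hat{\mathcal{R}}_{\tau_0}=\hat{H}_{a,b}\cap\hat{\mathcal{R}}_{\tau_0}$, writes this out explicitly, and then applies $M_{\sigma_\rho}$ to carry the description to the other chambers, using $M_{\sigma_\rho}(\mathcal{T}_{a,b}\cap\mathcal{R}_{\tau_0})=\mathcal{T}_{a,b}\cap\mathcal{R}_{\rho}$. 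You instead try to prove the identity $\hat{\mathcal{T}}_{a,b}\cap\hat{\mathcal{R}}_\rho=\hat{M}_{\sigma_\rho}(\hat{T}_0)$ directly, and it is in the $\subseteq$ direction that your argument has a genuine gap.

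The $\supseteq$ inclusion is fine, and the adjacent-chamber part of $\subseteq$ is essentially correct. The problem is the non-adjacent case. Your claim that ``the intersection $\hat{\mathcal{R}}_\rho\cap\hat{\mathcal{R}}_{\sigma\tau_0}$ has codimension at least two and reduces to a vertex or edge already contained in $\hat{M}_{\sigma_\rho}(\hat{T}_0)$'' is not correct as stated: $\hat{\mathcal{R}}_\rho\cap\hat{\mathcal{R}}_{\sigma\tau_0}$ is an unbounded linear cone, not a bounded vertex or edge, and it is certainly not contained in the bounded polytope $\hat{M}_{\sigma_\rho}(\hat{T}_0)$. What you presumably intend is that the portion of $\hat{M}_\sigma(\hat{T}_0)$ lying in that cone is contained in $\hat{M}_{\sigma_\rho}(\hat{T}_0)$ --- but that is precisely what needs to be proved, and low codimension alone does not yield it. There is a clean uniform argument that closes the gap and makes the adjacent/non-adjacent case split unnecessary: each closed chamber $\hat{\mathcal{R}}_\tau$ is a fundamental domain for the $S_k$-action on $\mathbb{R}^{k-1}$, i.e.\ if $q$ and $\hat{M}_\pi(q)$ both lie in the same $\hat{\mathcal{R}}_\tau$ then $\hat{M}_\pi(q)=q$ (this is just the uniqueness of the sorted rearrangement of a tuple). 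So if $x=\hat{M}_\sigma(q)$ with $q\in\hat{T}_0\subseteq\hat{\mathcal{R}}_{\tau_0}$ and $x\in\hat{\mathcal{R}}_\rho$, then $\hat{M}_{\sigma_\rho^{-1}}(x)=\hat{M}_{\sigma_\rho^{-1}\sigma}(q)$ and $q$ both lie in $\hat{\mathcal{R}}_{\tau_0}$, hence coincide, so $x=\hat{M}_{\sigma_\rho}(q)\in\hat{M}_{\sigma_\rho}(\hat{T}_0)$. A minor side remark: ``each non-origin vertex of $\hat{T}_0$ has two coordinates equal'' understates the case $k=4$, where $(-\hat{a},0,0)$ and $(0,0,\hat{b})$ have three equal coordinates; this does not hurt your adjacent-case reasoning, but you should state it accurately.
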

\begin{proof}
    We will first prove the statement for $q=3$. Recall that $\hat{\mathcal{T}}_{a,b} \cap \hat{\mathcal{R}}_{(23)} = \hat{\mathcal{H}}_{a,b}\cap \hat{\mathcal{R}}_{(23)}$, where $\hat{\mathcal{H}}_{a,b} = \{(x,y) \in \mathbb{R}^2: -\hat{b}\cdot x + \hat{a}\cdot y \leq \hat{a}\hat{b}\}$ and $\hat{\mathcal{R}}_{(23)} = \{(x,y) \in \mathbb{R}^2: x \leq 0 \leq y\}$. Therefore we can write
    \[
        \hat{\mathcal{H}}_{a,b}\cap \hat{\mathcal{R}}_{(23)}
        =
        \{(\hat{x},\hat{y})\in\mathbb{R}^2: \hat{x} \leq 0\leq \hat{y} \leq \frac{\hat{b}}{\hat{a}} \cdot \hat{x} + \hat{b}\}.
    \]
    If we replace $\hat{a}$ by $\log(a)$ and $\hat{b}$ by $\log(b)$
    we find that 
    \[
        \mathcal{T}_{a,b} \cap \mathcal{R}_{(23)} = 
        \exp(\hat{\mathcal{H}}_{a,b}\cap \hat{\mathcal{R}}_{(23)})
        =
        \{(e^{\hat{x}},e^{\hat{y}})\in\mathbb{R}_{>0}^2: \hat{x} \leq 0\leq \hat{y} \leq \frac{\log(b)}{\log(a)} \cdot \hat{x} + \log(b)\}.
    \]
    By applying $\exp$ to the individual components of the inequalities and replacing $x = e^{\hat{x}}$ and $y = e^{\hat{y}}$, we obtain the equality stated in the lemma. To prove the other equality for $q=3$ we note that for $\sigma = (12)$ we have $\sigma (23) = (123)$ and thus $M_\sigma(\mathcal{T}_{a,b} \cap \mathcal{R}_{(23)}) = \mathcal{T}_{a,b} \cap \mathcal{R}_{(123)}$. For $(x,y) \in \mathbb{R}_{>0}^2$ we have $M_{(12)}(x,y) = (y,x)$ and thus
    \begin{align*}
        \mathcal{T}_{a,b} \cap \mathcal{R}_{(123)} = \{(y,x) \in \mathbb{R}_{>0}^2: x \leq 1\leq y \leq l_{a,b}(x)\}
        = \{(x,y) \in \mathbb{R}_{>0}^2: y \leq 1 \leq x \leq l_{a,b}(y)\}.
    \end{align*}
    
    To prove the statements given for $q=4$ we recall that in that case $    \hat{H}_{a,b} = \{(x,y,z) \in \mathbb{R}^3: -\hat{b}\cdot x + \hat{a}\cdot z \leq \hat{a}\hat{b}\}$ and 
    $\hat{\mathcal{R}}_{(243)} = \{(x,y,z) \in \mathcal{R}^3: x \leq 0\leq y \leq z\}$. Therefore 
    \[  \hat{\mathcal{T}}_{a,b} \cap \hat{\mathcal{R}}_{(243)} = 
        \hat{\mathcal{H}}_{a,b}\cap \hat{\mathcal{R}}_{(243)} = \{(\hat{x},\hat{y},\hat{z})\in\mathbb{R}^3: \hat{x} \leq 0 \leq \hat{y} \leq \hat{z} \leq \frac{\hat{b}}{\hat{a}} \cdot \hat{x} + \hat{b}\}.
    \]
    Similarly, as in the $q=3$ case, it follows that 
    \[
        \mathcal{T}_{a,b} \cap \mathcal{R}_{(243)} 
        =
        \exp(\hat{\mathcal{T}}_{a,b} \cap \hat{\mathcal{R}}_{(243)})
        =
        \{(x,y,z) \in \mathbb{R}_{>0}^3: x \leq 1 \leq y \leq z \leq l_{a,b}(x)\}.
    \]
    If we let $\sigma = (13)(24)$ we have $\sigma \cdot (243) = (134)$. For this $\sigma$ and $(x,y,z) \in \mathbb{R}_{>0}^{3}$ we have
    $M_\sigma(x,y,z) = (z/y,1/y,x/y)$. We find that
    \begin{align*}
        \mathcal{T}_{a,b} \cap \mathcal{R}_{(134)} &= 
        M_\sigma(\mathcal{T}_{a,b} \cap \mathcal{R}_{(243)} )\\
        &=
        \{(z/y,1/y,x/y) \in \mathbb{R}_{>0}^3: x \leq 1 \leq y \leq z \leq l_{a,b}(x)\}\\
        &=
        \{(x,y,z) \in \mathbb{R}_{>0}^3: z/y \leq 1 \leq 1/y \leq x/y \leq l_{a,b}(z/y)\}\\
        &=
        \{(x,y,z) \in \mathbb{R}_{>0}^3: z \leq y \leq 1 \leq x \leq y\cdot l_{a,b}(z/y)\}.
    \end{align*}
\end{proof}

The next lemma provides inner and outer approximations of the sets $\mathcal{T}_{a,b}$ with simple polytopes.

\begin{lemma}
    \label{lem: convex hulls}
    Let $a,b \in \mathbb{R}_{>1}$ with $a \geq  b$. Then for q=3 we have
    \[
        \Conv(\{(1,1),(1/a,1),(1,b)\}) \subseteq \mathcal{T}_{a,b} \cap \mathcal{R}_{(23)}
    \]
    and
    \[
        \mathcal{T}_{a,b} \cap \mathcal{R}_{(123)} \subseteq 
        \Conv(\{(1,1),(b,1),(1,1-\frac{(b-1)\log(a)}{b\log(b)})\}).
    \]
    For q=4 we have
    \[
        \Conv(\{(1,1,1),(1/a,1,1),(1,b,b),(1,1,b)\}) \subseteq \mathcal{T}_{a,b} \cap \mathcal{R}_{(243)}
    \]
    and
    \[
        \mathcal{T}_{a,b} \cap \mathcal{R}_{(134)} \subseteq 
        \Conv(\{(1,1,1),(b,1,1),(1,1/b,1/b),(1,1,1-\frac{(b-1)\log(a)}{b\log(b)})\}).
    \]
\end{lemma}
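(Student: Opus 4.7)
The plan is to prove each of the four containments separately, with the $k=3$ and $k=4$ arguments running in close parallel. The common observation is that the assumption $a\geq b$ forces $c:=\log(b)/\log(a)\in(0,1]$, so $l_{a,b}(x)=bx^{c}$ is concave on $\mathbb{R}_{>0}$; consequently each of the sets $\mathcal{T}_{a,b}\cap \mathcal{R}_{\tau}$ appearing in the lemma will be convex in standard coordinates. Throughout I will work with the explicit coordinate descriptions of these intersections supplied by Lemma~\ref{lem: region equations}.

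For the two inner containments the approach is a direct vertex check. For $k=3$ the set $\{(x,y):x\leq 1\leq y\leq l_{a,b}(x)\}$ is convex as the intersection of two half-planes with the hypograph of the concave function $l_{a,b}$. Using the two identities $l_{a,b}(1)=b$ and $l_{a,b}(1/a)=1$, each of the three proposed vertices is seen to satisfy the defining inequalities, and convexity gives the hull containment. The $k=4$ inner containment is identical: the set $\{(x,y,z):x\leq 1\leq y\leq z\leq l_{a,b}(x)\}$ is convex for the same reason and each of the four stated vertices satisfies the defining inequalities.

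For the outer containments I will use a supporting-hyperplane argument at the corner where the curved boundary meets the face $\{y=1\}$ (and $\{z=1\}$ for $k=4$). In the $k=3$ case the tangent line to $x=by^{c}$ at $(b,1)$ has equation $x=bcy+b(1-c)$; by concavity this line lies above the curve, so the set inclusion $\{x\leq by^{c}\}\subseteq\{x\leq bcy+b(1-c)\}$ holds. Intersecting with $\{x\geq 1\}$ and $\{y\leq 1\}$ yields a triangle, and a short computation using $1/c=\log(a)/\log(b)$ identifies its three vertices as $(1,1)$, $(b,1)$ and $(1,1-(b-1)\log(a)/(b\log(b)))$, which is exactly the hull stated in the lemma.

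For $k=4$ the set becomes $\{(x,y,z):z\leq y\leq 1\leq x\leq by^{1-c}z^{c}\}$ since $y\cdot l_{a,b}(z/y)=by^{1-c}z^{c}$. Because the weighted geometric mean $y^{1-c}z^{c}$ is concave on $\mathbb{R}_{>0}^{2}$, the tangent plane to the surface $x=by^{1-c}z^{c}$ at $(b,1,1)$ is $x=b(1-c)y+bcz$, and by weighted AM--GM the set is contained in the half-space $\{x\leq b(1-c)y+bcz\}$. Intersecting this half-space with $\{x\geq 1\},\{y\leq 1\},\{z\leq y\}$ gives a tetrahedron; solving each of the four systems of three active constraints produces the vertices $(1,1,1)$, $(b,1,1)$, $(1,1/b,1/b)$ and $(1,1,1-(b-1)\log(a)/(b\log(b)))$, matching the hull in the statement. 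The main obstacle will be this vertex computation: in particular the vertex with non-trivial $z$-coordinate arises from the system $x=1$, $y=1$, $x=b(1-c)y+bcz$, and one has to recognise $1-(b-1)/(bc)$ as $1-(b-1)\log(a)/(b\log(b))$ using the definition of $c$.
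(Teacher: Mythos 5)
Your proof is correct and follows essentially the same route as the paper: you establish concavity of $l_{a,b}$, verify the inner containments by a direct vertex check, and for the outer containments bound the curved face by its tangent line (respectively tangent plane, via weighted AM--GM) at the point where $y=1$ (resp.\ $y=z=1$), then identify the vertices of the resulting polytope. The only cosmetic difference is that the paper packages the vertex identification in a reusable half-space lemma, whereas you solve the active-constraint systems directly; the calculation is the same.
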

\begin{proof}
    Let $l_{a,b}$ be as in Lemma~\ref{lem: region equations}. We define $c = \log(b)/\log(a)$ so that we can write $l_{a,b}(x) = b \cdot x^c$. By assumption $c\leq 1$, therefore the function $l_{a,b}$ is concave and thus the sets 
    \[
        \{(x,y) \in \mathbb{R}_{>0}^2: y \leq l_{a,b}(x)\} 
        \quad\text{ and }
        \quad
        \{(x,y,z) \in \mathbb{R}_{>0}^3: z \leq l_{a,b}(x)\} 
    \]
    are convex. It follows now from Lemma~\ref{lem: region equations} that the sets $\mathcal{T}_{a,b} \cap \mathcal{R}_{(23)}$ for $q=3$ and 
    $\mathcal{T}_{a,b} \cap \mathcal{R}_{(243)}$ for $q=4$ are convex. It is easy to see that the former set contains the points $(1,1),(1/a,1)$ and $(1,b)$ and that the latter set contains the points $(1,1,1),(1/a,1,1),(1,b,b)$ and $(1,1,b)$. This is enough to conclude that the first stated inclusions for $q=3$ and $q=4$ hold.
    
    Because $l_{a,b}$ is concave we find that for all $x>0$
    \begin{equation}
    \label{eq: concave inequality}
        l_{a,b}(x) \leq l_{a,b}'(1)(x-1) + l_{a,b}(1) = bc(x-1) + b. 
    \end{equation}
    Therefore, using Lemma~\ref{lem: region equations}, we have the following inclusion for $q=3$
    \[
         \mathcal{T}_{a,b} \cap \mathcal{R}_{(123)} = \{(x,y) \in \mathbb{R}_{>0}^2: y \leq 1 \leq x \leq l_{a,b}(y)\}
        \subseteq 
        \{(x,y) \in \mathbb{R}^2: y \leq 1 \leq x \leq bc(y-1)+b\}.
    \]
    Note that in the latter set we do not require $x$ and $y$ to be positive. This set can also be written as the intersection of the following three half-spaces
    \[
        H_1 = \{(x,y) \in \mathbb{R}^2: y \leq 1\},\
        H_2 = \{(x,y) \in \mathbb{R}^2: 1 \leq x\}
        \text{ and }
        H_3 = \{(x,y) \in \mathbb{R}^2: x \leq bc(y-1)+b\}.
    \]
    Note that $(1,1) \in \partial H_1 \cap \partial H_2 \cap \Int(H_3)$, 
    $(b,1) \in \partial H_1 \cap \Int(H_2) \cap \partial H_3$ and
    $(1,1-\frac{b-1}{bc}) \in \Int(H_1) \cap \partial H_2 \cap \partial H_3$. The second inclusion for $q=3$ stated in the lemma follows from Lemma~\ref{lem: half-spaces}.
    
    
    From Lemma~\ref{lem: region equations} and equation~{(\ref{eq: concave inequality})} we deduce that for q=4
    \[
        \mathcal{T}_{a,b} \cap \mathcal{R}_{(134)} \subseteq \{(x,y,z) \in \mathbb{R}^3: z \leq y \leq 1 \leq x \leq y \cdot \left(bc(z/y-1) + b\right)\}.
    \]
    So $\mathcal{T}_{a,b} \cap \mathcal{R}_{(134)}$ is contained in the intersection of the following half-spaces
    \begin{alignat*}{2}
    H_1 &= \{(x,y,z) \in \mathbb{R}^3: z \leq y\},
    \quad
    H_2 &&=\{(x,y,z) \in \mathbb{R}^3: y \leq 1\},\\
    H_3 &= \{(x,y,z) \in \mathbb{R}^3: 1 \leq x\},
    \quad
    H_4 &&=\{(x,y,z) \in \mathbb{R}^3: x \leq bc(z-y)+by\}.
    \end{alignat*}
    We see that 
    \begin{alignat*}{2}
    (1,1,1)&\in \partial H_1\cap \partial H_2 \cap \partial H_3 \cap \Int(H_4),
    \quad
    &&(b,1,1)\in \partial H_1\cap \partial H_2  \cap \Int(H_3) \cap \partial H_4\\
    (1,1/b,1/b)&\in \partial H_1  \cap \Int(H_2) \cap \partial H_3 \cap \partial H_4,
    \quad
    &&(1,1,1-\frac{b-1}{bc})\in \Int(H_1)\cap \partial H_2 \cap \partial H_3 \cap \partial H_4.
    \end{alignat*}
    Using Lemma~\ref{lem: half-spaces} we can conclude that the last stated inclusion in the lemma indeed holds.
\end{proof}

\end{subsection}

\begin{subsection}{Proof of the \hyperref[thm:main]{Main Theorem}}
\label{sec: main thm proof}

In this section we prove the \hyperref[thm:main]{Main Theorem}. We utilize a number of inequalities for which the proofs can be found in the next section.
\begin{lemma}\label{lem:onecondition}
    Let $q \in \{3,4\}$, $d \in \mathbb{Z}_{\geq 2}$ for $q=3$ and $d \in \mathbb{Z}_{\geq 4}$ for $q =4$ and let $1- q/(d+1) \leq w < 1$. 
    Let $a,b \in \mathbb{R}_{>1}$ such that 
    \begin{equation}\label{eq:inequality}
             \max\left\{ b, \frac{b^d + w + k-2}{w b^d + (q-1)}\right\}
        < 
        a
        < 
        b^{\frac{b^d(q-1+w)(b-1)}{(b^d-1)(b-w)}}.
     \end{equation}
    Then $F(\mathcal{T}_{a,b}) \subseteq \Int(\mathcal{T}_{a,b})$.
\end{lemma}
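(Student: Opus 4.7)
The plan is to apply Lemma~\ref{lem: One region is enough} in order to reduce the claim $F(\mathcal{T}_{a,b}) \subseteq \Int(\mathcal{T}_{a,b})$ to a containment over a single fundamental domain of the $S_k$-action. I will take $\tau = (123)$ for $k=3$ and $\tau = (134)$ for $k=4$: as noted in Section~\ref{sec: symmetry of F}, these are arranged so that $F(\mathcal{R}_\tau) \subseteq \mathcal{R}_{\tau\circ m}$, with $\tau\circ m = (23)$ for $k=3$ and $\tau\circ m = (243)$ for $k=4$. Thus the image of $\mathcal{T}_{a,b}\cap\mathcal{R}_\tau$ automatically has the correct coordinate ordering, and it remains only to verify that it lies \emph{strictly inside} the slab carving out $\mathcal{T}_{a,b}\cap\mathcal{R}_{\tau\circ m}$. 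Before doing this I would check that the hypotheses of Lemma~\ref{lem:convexity} are met, i.e.\ $b\leq a\leq b^2$: the lower bound is built into \eqref{eq:inequality}, while the upper bound $a\leq b^2$ amounts to comparing the exponent in the upper bound of \eqref{eq:inequality} to $2$, a short technical inequality that I would defer to Section~\ref{sec: Inequality section}.

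Applying Lemma~\ref{lem: region equations} to $\mathcal{T}_{a,b}\cap\mathcal{R}_{\tau\circ m}$, the containment $F(p)\in\Int(\mathcal{T}_{a,b})$ unfolds into three strict inequalities. The ordering inequalities $F_1(p)<1<F_2(p)$ (and $F_2\leq F_3$ when $k=4$) follow directly from the explicit form of $F$ together with the input constraints on $p$; the sole degenerate case $p=(1,\ldots,1)$ maps to $F(p)=(1,\ldots,1)$, which lies in $\Int(\mathcal{T}_{a,b})$ because the origin sits in the interior of every half-space defining $\hat{\mathcal{T}}_{a,b}$. The remaining slanted inequality $F_{k-1}(p)<l_{a,b}(F_1(p))$ carries the real content of the lemma.

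To handle the slanted inequality I would enclose $\mathcal{T}_{a,b}\cap\mathcal{R}_\tau$ inside the convex hull of three (for $k=3$) or four (for $k=4$) explicit vertices given by Lemma~\ref{lem: convex hulls}, and argue that the excess $\Psi(p) := -\hat b\log F_1(p) + \hat a\log F_{k-1}(p) - \hat a\hat b$ attains its supremum on this hull at the two nontrivial vertices $(b,1,\ldots)$ and $(1,\ldots,1,y_0)$ with $y_0 = 1 - (b-1)\hat a/(b\hat b)$. A direct computation then converts the strict constraint $\Psi(b,1,\ldots)<0$, which unpacks as $F_1(b,1,\ldots)>1/a$, into the lower bound $a>(b^d+w+k-2)/(wb^d+k-1)$ appearing in \eqref{eq:inequality}. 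Similarly, the constraint at $(1,\ldots,1,y_0)$ becomes $F_{k-1}(1,\ldots,1,y_0)<b$, and after eliminating $y_0$ this reduces to exactly the upper bound on $a$ in \eqref{eq:inequality}. This matching explains why \eqref{eq:inequality} takes its particular form.

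I expect the main obstacle to be showing that $\Psi$ really is maximized at those two vertices rather than elsewhere on the hull. I would tackle this by an edge-by-edge analysis: along each edge of the hull, $\Psi$ should be shown to be either monotone or concave in a one-dimensional parameter, reducing the global maximization to its endpoints. Several of these monotonicity or concavity claims, together with the algebraic manipulations translating the vertex-level constraints into the form of \eqref{eq:inequality}, come down to polynomial inequalities in $(a,b,w,d)$ subject to the hypotheses $d\geq 2$ (for $k=3$), $d\geq 4$ (for $k=4$), and $w\geq 1 - k/(d+1)$; I would state these cleanly and relegate their verification to Section~\ref{sec: Inequality section}.
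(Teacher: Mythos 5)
There is a genuine gap, and it comes from not exploiting the structure $F = G\circ P$, where $P(x)=(x_1^d,\dots,x_{k-1}^d)$. The paper's key move is to observe that in logarithmic coordinates $P$ acts as multiplication by $d$, so $P(\mathcal{T}_{a,b})=\mathcal{T}_{a^d,b^d}$ \emph{exactly}, and then to apply the enclosing simplex of Lemma~\ref{lem: convex hulls} to $\mathcal{T}_{a^d,b^d}\cap\mathcal{R}_\tau$, after which only $G$ remains. Because $G$ is linear-fractional, $G$ sends that simplex to the simplex spanned by the $G$-images of its vertices, and verifying $F(\mathcal{T}_{a,b}\cap\mathcal{R}_\tau)\subseteq\Int(\mathcal{T}_{a,b})$ collapses to three (resp.\ four) strict scalar inequalities at vertices, which are exactly the inequalities in \eqref{eq:inequality} together with item~\hyperref[it: inequality 4]{(4)} of Theorem~\ref{thm:inequalities}.

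You instead enclose $\mathcal{T}_{a,b}\cap\mathcal{R}_\tau$ in a simplex and then apply $F$ whole, introducing the function $\Psi$ and hoping to maximize it over the hull by ``edge-by-edge'' analysis. This path has two concrete problems. First, once the $d$-th power map $P$ sits between the hull and the image, the image is no longer a simplex, and $\Psi$ is the $\log$ of a rational function composed with a $d$-th power: there is no obvious convexity or monotonicity along edges, and you yourself write ``monotone or concave,'' but concavity on an edge gives a minimum in the interior, not a maximum at endpoints — precisely the wrong direction for bounding $\Psi$ from above. Second, your vertex check does not actually produce \eqref{eq:inequality}: with $y_0 = 1 - (b-1)\hat a/(b\hat b)$, you would need to evaluate $F_{k-1}(1,\dots,1,y_0)=G_{k-1}(1,\dots,1,y_0^d)=f_k(y_0^d)$, and $y_0^d = \bigl(1 - (b-1)\hat a/(b\hat b)\bigr)^d$ is not equal to $1 - (b^d-1)\hat a/(b^d\hat b)$; the paper obtains the latter because $P$ is pushed past the hull operation before the tangent-line bound of Lemma~\ref{lem: convex hulls} is applied. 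Finally, you do not address the domain issue that the vertex $(1,\dots,1,\,1-(b^d-1)\hat a/(b^d\hat b))$ need not lie in $\mathbb{R}_{>0}^{k-1}$; the paper extends $G$ to the half-space $x_1+\dots+x_{k-1}+w>0$ and verifies this vertex lies there using the upper bound in \eqref{eq:inequality}. Without the factorization $F=G\circ P$ and the linear-fractional property of $G$, the reduction to finitely many vertex inequalities does not go through, and the deferred edge analysis is where the proof would actually break.
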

\begin{proof}
    Recall from Section~\ref{sec: Intro functions} that we can write $F$ as the 
    composition $G \circ P$, where $P(x_1,\dots, x_{q-1}) = (x_1^d, \dots, x_{q-1}^d)$. In logarithmic coordinates the map $\hat{P} = \log\circ P\circ\exp$
    acts as multiplication by $d$. In the proof of Lemma~\ref{lem:convexity} we showed that $\hat{\mathcal{T}}_{a,b}$
    is a polytope whose vertices have entries $0$, $\pm \hat{a}$ or $\pm \hat{b}$. It follows that $\hat{P}(\hat{\mathcal{T}}_{a,b})$ is the same polytope where $\hat{a}$ and $\hat{b}$ are replaced by $d\cdot\hat{a}$ and $d\cdot\hat{b}$ respectively. Because $\hat{a} = \log(a)$ and $\hat{b} = \log(b)$, we can conclude that $P(\mathcal{T}_{a,b}) = \mathcal{T}_{a^d,b^d}$. It follows from Lemma~\ref{lem: One region is enough} that it is enough to show that $G (\mathcal{T}_{a^d,b^d} \cap \mathcal{R}_{(123)}) = F(\mathcal{T}_{a,b} \cap \mathcal{R}_{(123)}) \subseteq \Int(\mathcal{T}_{a,b})$ for $q=3$ and
    $G (\mathcal{T}_{a^d,b^d} \cap \mathcal{R}_{(134)}) = F(\mathcal{T}_{a,b} \cap \mathcal{R}_{(134)}) \subseteq \Int(\mathcal{T}_{a,b})$ for $q=4$. 
    
    We use Lemma~\ref{lem: convex hulls} to conclude that it is enough to show that 
    \begin{equation}
    \label{eq: conv3}
        G\big(\Conv(\{(1,1),(b^d,1),(1,1-\frac{(b^d-1)\log(a)}{b^d\log(b)})\})\big) \subseteq \Int(\mathcal{T}_{a,b})
    \end{equation}
    for $q=3$ and
    \begin{equation}
    \label{eq: conv4}
        G\big(\Conv(\{(1,1,1),(b^d,1,1),(1,1/b^d,1/b^d),(1,1,1-\frac{(b^d-1)\log(a)}{b^d\log(b)})\})\big) \subseteq \Int(\mathcal{T}_{a,b})
    \end{equation}
    for $q=4$. We have to be careful here because initially we defined
    $G$ as a map on $\mathbb{R}_{>0}^{q-1}$. We can extend $G$ to the half-space $H = \{(x_1, \dots, x_{q-1}): x_1 + \cdots + x_{q-1} + w > 0\}$. To show that the sets in equations (\ref{eq: conv3}) and (\ref{eq: conv4}) are contained in $H$ it is enough to show that the vertices of these convex hulls are contained in $H$. This is clear for all but the last written vertex in either case. We will show that the equation $x_1 + \cdots + x_{q-1} + w > 0$ does indeed hold for these two points. Namely, by \eqref{eq:inequality} we have
    \begin{align*}
        x_1 + \cdots + x_{q-1} + w &= q-1 - \frac{(b^d-1)\log(a)}{b^d\log(b)} + w \\
        & > q-1 - \frac{(b^d-1)}{b^d}\cdot \frac{b^d(q-1+w)(b-1)}{(b^d-1)(b-w)} + w\\
        &= \frac{(1-w)(q-1+w)}{b-w} \geq 0,
    \end{align*}
    as desired.
    
    The map $G$ is a linear-fractional function, which means that $G$ sends line segments to line segments (see e.g. Section 2.3.3 of \cite{boyd2004convex}). Thus, for any set of points $p_1, \dots, p_n$ we have $G(\Conv(\{p_1,\dots, p_n\})) = \Conv{(\{G(p_1),\dots,G(p_n)\})}$. Let
    \[
        f_{q}(x) = \frac{w x + q-1}{x + k-2 + w}
        \quad\text{ and }\quad
        g(x) = \frac{(1+w)x+2}{2x+1+w}.
    \]
    The left-hand side of (\ref{eq: conv3}) is equal to
    \[
        \Conv\left(\{(1,1),(f_3(b^d),1),(1,f_{3}\left(1-\frac{(b^d-1)\log(a)}{b^d\log(b)}\right))\}\right)
    \]
    and the left-hand side of (\ref{eq: conv4}) is equal to
    \[
        \Conv(\{(1,1,1),(f_4(b^d),1,1),(1,g(1/b^d),g(1/b^d)),(1,1,f_4\left(1-\frac{(b^d-1)\log(a)}{b^d\log(b)}\right)).
    \]
    We can use Lemma~\ref{lem: convex hulls} to see that it is enough to show that
    \begin{equation}
        \label{eq: inequalities}
        f_q(b^d) > 1/a, 
        \quad
        g(1/b^d) < b
        \quad
        \text{ and }
        \quad
        f_q\left(1-\frac{(b^d-1)\log(a)}{b^d\log(b)}\right) < b
    \end{equation}
    to conclude that these sets are contained in $\mathcal{T}_{a,b} \cap \mathcal{R}_{(23)}$ and $\mathcal{T}_{a,b} \cap \mathcal{R}_{(243)}$ respectively. The first inequality follows directly from the assumptions. The second inequality follows from item~(\hyperref[it: inequality 4]{4}) of Theorem~\ref{thm:inequalities} below.
    For the last inequality we note that $f_q$ is strictly decreasing and $1-\frac{(b^d-1)\log(a)}{b^d\log(b)}$ is also strictly decreasing in $a$. Therefore, it is enough to show the inequality for $a= b^{\frac{b^d(q-1+w)(b-1)}{(b^d-1)(b-w)}}$. We obtain
    \[
        f_q\left(1-\frac{(b^d-1)\log(a)}{b^d\log(b)}\right) 
        <
        f_q\left(1-\frac{(q-1+w)(b-1)}{b-w}\right)
        =
        b.
    \]
    Because the inequalities in (\ref{eq: inequalities}) are strict we can even conclude that $\mathcal{T}_{a,b}$ gets mapped strictly inside itself by $F$, i.e. $F(\mathcal{T}_{a,b}) \subseteq \Int(\mathcal{T}_{a,b})$.
\end{proof}

\begin{theorem}\label{thm:precisemaintheorem}
Let $q \in \{3,4\}$, $d \in \mathbb{Z}_{\geq 2}$ for $q=3$ and $d \in \mathbb{Z}_{\geq 4}$ for $q =4$ and let $1- q/(d+1) \leq w < 1$ with $w>0$. Then the $q$-state Potts model with weight $w$ on the infinite $(d+1)$-regular tree, $\mathbb{T}_d$,  has a unique Gibbs measure.
\end{theorem}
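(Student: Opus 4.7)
The plan is to apply Lemma~\ref{lem: Tab sequence} with $\mathcal{T}_n := \mathcal{T}_{a_n, b_n}$ for a sequence $(a_n, b_n) \in \mathbb{R}_{>1}^2$ that I will construct satisfying $b_n \leq a_n \leq b_n^2$ throughout. Under this constraint Lemma~\ref{lem:convexity} guarantees each $\mathcal{T}_{a_n, b_n}$ is log-convex, and by construction each is invariant under the $S_k$-action on $\mathbb{R}_{>0}^{k-1}$. The argument then has three stages corresponding to the three hypotheses of Lemma~\ref{lem: Tab sequence}, the first two of which are essentially mechanical and the third of which is the heart of the matter.

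For the base case, item~(\ref{it:basecase}), I would pick $(a_0, b_0)$ with $a_0 \geq 1/w$. Inspecting the explicit list of vertices of $\hat{\mathcal{T}}_{a_0, b_0}$ computed in the proof of Lemma~\ref{lem:convexity}, one sees that this is exactly what is needed for $\mathcal{T}_{a_0, b_0}$ to contain $(1/w, \ldots, 1/w)$ and every vector obtained from the all-ones vector by changing a single coordinate to $w$. I would simultaneously require $(a_0, b_0)$ to lie in the admissible region defined by \eqref{eq:inequality}; a direct check shows such a choice exists as long as $w \geq 1 - k/(d+1)$.

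For the inductive step, item~(\ref{it:inductionstep}), I would invoke Lemma~\ref{lem:onecondition}: whenever \eqref{eq:inequality} holds at $(a_n, b_n)$, one has the strict inclusion $F(\mathcal{T}_{a_n, b_n}) \subseteq \Int(\mathcal{T}_{a_n, b_n})$. Combining continuity of $F$, compactness of $\mathcal{T}_{a_n, b_n}$, and continuous dependence of $\mathcal{T}_{a, b}$ on the parameters, I can pick $(a_{n+1}, b_{n+1})$ strictly smaller than $(a_n, b_n)$ in each coordinate, still inside the admissible region of \eqref{eq:inequality}, with $F(\mathcal{T}_{a_n, b_n}) \subseteq \mathcal{T}_{a_{n+1}, b_{n+1}}$. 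Driving $(a_n, b_n) \to (1, 1)$ collapses $\mathcal{T}_n$ onto the fixed point $(1, \ldots, 1)$ of $F$ and delivers item~(\ref{it:laststep}).

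The main obstacle --- and the place where the sharp threshold enters --- is showing that the admissible region of \eqref{eq:inequality} in $(a, b)$-space extends continuously down to $(1, 1)$, so that the sequence above can be driven all the way to the fixed point. A first-order expansion of both sides of \eqref{eq:inequality} at $b = 1$ reduces the existence of an admissible $a$ close to $b = 1$ to the comparison $d(1 - w) \leq k - 1 + w$, equivalently $w \geq 1 - k/(d+1)$, which is precisely the hypothesis. At the boundary $w = 1 - k/(d+1)$ the first- and even higher-order expansions of $\log(\mathrm{RHS}) - \log(\mathrm{LHS})$ tie, and the positivity of the gap becomes a genuinely delicate analytic statement. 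This is exactly the content of the inequalities collected in Theorem~\ref{thm:inequalities} of Section~\ref{sec: Inequality section} (one of which is already invoked inside the proof of Lemma~\ref{lem:onecondition}); granted these, a standard diagonal argument produces a sequence $(a_n, b_n) \to (1, 1)$ along the admissible region, and the proof is complete.
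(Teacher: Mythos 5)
Your proposal follows the same overall strategy as the paper: build the sequence $\mathcal{T}_n$ from the two--parameter family $\mathcal{T}_{a,b}$, use Lemma~\ref{lem:convexity} for log-convexity, Lemma~\ref{lem:onecondition} for the strict inclusion $F(\mathcal{T}_{a,b}) \subseteq \Int(\mathcal{T}_{a,b})$, and Theorem~\ref{thm:inequalities} to guarantee that \eqref{eq:inequality} admits solutions for every $b>1$. Your base-case analysis ($a_0 \geq 1/w$ is what matters for containment of the starting vectors, since those vectors all lie along the $\hat{a}$-directions of the polytope) and your identification of the sharp threshold $d(1-w) \leq k-1+w$ via a first-order expansion at $b=1$ are both accurate.

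The one genuine gap is in how you produce a sequence $(a_n,b_n)\to(1,1)$. You say: pick $(a_{n+1},b_{n+1})$ strictly smaller at each step so that $F(\mathcal{T}_{a_n,b_n})\subseteq\mathcal{T}_{a_{n+1},b_{n+1}}$, and then ``a standard diagonal argument'' drives the sequence to $(1,1)$. This does not follow as stated: a strictly decreasing sequence in $(1,\infty)^2$ can perfectly well stall at a limit $(a_\infty,b_\infty)\neq(1,1)$, and the fact that the admissible region extends down to $(1,1)$ does not by itself prevent that. The paper handles precisely this point with a nontrivial device: it collapses to the one-parameter family $a=M(b):=(L(b)+U(b))/2$, defines each successive parameter \emph{greedily} as
\[
b_n = \inf\bigl\{\,b\;:\; F(\mathcal{T}_{M(b_{n-1}),\,b_{n-1}}) \subseteq \mathcal{T}_{M(b),\,b}\,\bigr\},
\]
and then argues by contradiction that the limit must be $1$: if $b_n \to b_\infty > 1$, the strict inclusion $F(\mathcal{T}_{M(b_\infty),b_\infty}) \subseteq \Int(\mathcal{T}_{M(b_\infty),b_\infty})$, compactness, and continuity of $b\mapsto\mathcal{T}_{M(b),b}$ yield some $b'<b_\infty$ and an open neighborhood of $b_\infty$ from which the image still fits inside $\mathcal{T}_{M(b'),b'}$; once some $b_N$ lands in that neighborhood, the infimum definition forces $b_{N+1}\leq b'<b_\infty$, a contradiction. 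Your sketch should either reproduce this greedy-plus-contradiction argument or replace it by an equally concrete mechanism (e.g.\ a quantitative bound showing the sets contract by a definite amount at each step); invoking a ``diagonal argument'' leaves the key convergence claim unsupported.
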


\begin{proof}
    We will construct a sequence of subsets $\{\mathcal{T}_n\}_{n\geq 0}$ as is described in Lemma~\ref{lem: Tab sequence}.
    Define the functions 
    \[
        L(b) = \max\left\{b, \frac{b^d + w + k-2}{w b^d + (q-1)}\right\}
        \quad \text{ and }\quad
        U(b) = \min \left\{b^2,b^{\frac{b^d(q-1+w)(b-1)}{(b^d-1)(b-w)}}\right\}.
    \]
    It follows from items (\ref{it: inequality 1}), (\ref{it: inequality 2}) and (\ref{it: inequality 3}) of Theorem~\ref{thm:inequalities} that $L(b) < U(b)$ for $b > 1$. We define $M(b) = (L(b)+U(b))/2$ and note that $L(b) < M(b) < U(b)$ for $b>1$. Any element of $\mathbb{R}_{>0}^{q-1}$ is contained in $\mathcal{T}_{b,b}$ for a large enough value of $b$. It follows that we can choose $b_0>1$ such that $\mathcal{T}_{b_0,b_0}$ contains both the vector with every entry equal to $1/w$ and the vectors obtained from the all-ones vector with a single entry changed to $w$. Because $M(b_0)>b_0$ we have $\mathcal{T}_{b_0,b_0} \subset \mathcal{T}_{M(b_0),b_0}$ and thus $\mathcal{T}_{M(b_0),b_0}$ contains these vectors too. Inductively we now define $b_n$ for $n\geq 1$ by
    \[
        b_n = \inf\left\{b: F(\mathcal{T}_{M(b_{n-1}),b_{n-1}}) \subseteq \mathcal{T}_{M(b),b}\right\}.
    \]
    Because $\mathcal{T}_{M(b),b}$ moves continuously with $b$ it follows from Lemma~\ref{lem:onecondition} that $\{b_n\}_{n \geq 1}$ is a strictly decreasing sequence. The sequence is clearly bounded below by $1$ and thus it must have a limit. We claim that this limit is $1$. For the sake of contradiction assume that it has a limit $b_\infty > 1$. The set $\mathcal{T}_{M(b_\infty),b_\infty}$ gets mapped strictly inside itself by $F$ and thus there is a $b'<b_\infty$ such that $\mathcal{T}_{M(b_\infty),b_\infty}$ also gets mapped strictly inside $\mathcal{T}_{M(b'),b'}$. This is an open condition, so there is an $\epsilon>0$ such that $\mathcal{T}_{M(b),b}$ gets mapped strictly inside $\mathcal{T}_{M(b'),b'}$ for all $b \in [b_\infty,b_\infty + \epsilon)$. There must be an integer $N$ such that $b_N \in [b_\infty,b_\infty + \epsilon)$, but then $b_{N+1} < b' < b_\infty$, so $b_\infty$ cannot be the limit of the decreasing sequence $\{b_n\}_{n \geq 0}$.
    
    We define $\mathcal{T}_n = \mathcal{T}_{M(b_n),b_n}$. We have $b < M(b) < b^2$, so it follows from Lemma~\ref{lem:convexity} that every $\mathcal{T}_n$ is log-convex. We have chosen $\mathcal{T}_0$ such that condition (\ref{it:basecase}) of Lemma~\ref{lem: Tab sequence} is satisfied. By construction $F(\mathcal{T}_{m}) \subseteq \mathcal{T}_{m+1}$ for all $m$ and thus condition (\ref{it:inductionstep}) of Lemma~\ref{lem: Tab sequence} is satisfied. Finally, because both $b_n$ and $M(b_n)$ converge to $1$, it follows that the sequence of sets $\mathcal{T}_n$ converges to the set consisting of just the all-ones vector. This means that condition (\ref{it:laststep}) of Lemma~\ref{lem: Tab sequence} is satisfied. We can conclude that $\mathbb{T}_d$ has a unique Gibbs measure.
\end{proof}

\begin{remark}
The assumption $w>0$ is critical in the case $q=3$ and $d=2$, as it is well known there are multiple Gibbs measures at $w_c=0$ when $q=d+1$. 
One sees this in our argument as well. For the base case of the induction, condition (\ref{it:basecase}) of Lemma~\ref{lem: Tab sequence}, we need $\mathcal{T}_0$ to contain the vectors $(1,w) =(1,0)$, $(w,1)=(0,1)$ and $(1/w,1/w) = (\infty,\infty)$. If we take the log convex hull of these vectors and apply $F$, we obtain a region that again contains the vectors $(1,w) =(1,0)$, $(w,1)=(0,1)$ and $(1/w,1/w) = (\infty,\infty)$. It is thus possible to choose boundary conditions that yield unbounded ratios at an arbitrary distance from the leaves.
This observation is closely related to the existence of so-called frozen colorings \cite{BRIGHTWELL2000141}. These give distinct \emph{trivial} Gibbs measures, each supported on a single coloring of $\mathbb{T}_2$.



\end{remark}

\end{subsection}
\end{section}

\begin{section}{Proof of the inequalities}
\label{sec: Inequality section}
This section is dedicated to showing all the inequalities from the previous section are satisfied. We define the following functions
\begin{alignat*}{3}
l(q,d,w,b) &= \frac{ b^d+q-2+w}{w b^d + q-1}, \quad
   &&g(d,w,b) = \frac{2b^d+1+w}{(1+w)b^d+2},\\
    h(q,d,w,b) &=\frac{ b^d (b-1) (q-1+w)}{\left(b^d-1\right) (b - w)},  \quad \quad
    &&u(q,d,w,b) = b^{h(q,d,w,b)}.
\end{alignat*}
We mostly consider these as functions in $b$ and consider only $b \geq 1$.
Note that $h(q,d,w,b)$ has a removable singularity in $b=1$ with $h(q,d,w,1) = \frac{q-1+w}{d(1-w)}$. The main theorem we prove in this section is the following.
\begin{theorem}\label{thm:inequalities}
For $q=3, d \geq 2$ and $w \in [1 - \frac{3}{d+1}, 1)$ or for $q=4, d \geq 4$ and $w \in [1 - \frac{4}{d+1}, 1)$ we have for each $b >1$
\begin{enumerate}
    \item
    \label{it: inequality 1}
    $u(q,d,w,b) > l(q,d,w,b)$,
    \item
    \label{it: inequality 2}
    $u(q,d,w,b) > b$,
    \item
    \label{it: inequality 3}
    $b^2>l(q,d,w,b)$.
\end{enumerate}
And for all $b >1$ and $d \geq 3$ and $w \in [1 - \frac{4}{d+1}, 1)$ we have
\begin{enumerate}
    \item[(4)]\label{it: inequality 4}
    $g(d,w,b) < b$.
\end{enumerate}
\end{theorem}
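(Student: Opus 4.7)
My plan is to treat each of the four inequalities by clearing denominators (or, for (1), additionally applying Bernoulli's inequality) to obtain a function that vanishes at $b=1$, and then to prove it is strictly positive for $b>1$ via derivative information. In every case the hypothesis $w\ge 1-k/(d+1)$ is precisely what ensures that the first derivative at $b=1$ is non-negative, while the lower bound on $d$ guarantees convexity on $[1,\infty)$.

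For inequality (4), $g(d,w,b)<b$ rearranges to
\[
f(b):=(1+w)b^{d+1}-2b^d+2b-(1+w)>0.
\]
I will check that $f(1)=0$ and $f'(1)=(d+1)w+3-d\ge 0$ iff $w\ge 1-4/(d+1)$, and that
\[
f''(b)=db^{d-2}\bigl[(1+w)(d+1)b-2(d-1)\bigr]
\]
is strictly positive on $[1,\infty)$ since $(1+w)(d+1)\ge 2(d-1)$ under the same bound on $w$. Convexity then gives $f(b)>0$ for $b>1$.

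Inequalities (2) and (3) are handled similarly. For (3), clearing denominators yields $P(b):=wb^{d+2}-b^d+(k-1)b^2-(k-2+w)$, and the factorisation $P(b)=(b-1)Q(b)$ with
\[
Q(b) = w\sum_{j=0}^{d+1}b^j + (k-1)(b+1) - \sum_{j=0}^{d-1}b^j
\]
reduces the problem to $Q(b)>0$ on $[1,\infty)$. A direct evaluation shows $Q(1)=\tfrac{kd}{d+1}>0$ at $w=w_c:=1-k/(d+1)$, and the positive contributions $w(b^{d+1}+b^d)$ dominate the negative part of $Q$ on $[1,\infty)$ when $w\ge w_c$. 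For (2), note that $u>b\iff h>1$ for $b>1$, so it suffices to prove $N(b):=b^d(b-1)(k-1+w)-(b^d-1)(b-w)>0$ for $b>1$. One computes $N(1)=0$, $N'(1)=(d+1)w+k-1-d\ge 0$, and $N''(1)=d(k-2)>0$ at $w=w_c$ for $k\in\{3,4\}$; a further analysis of $N''$ on $[1,\infty)$ yields convexity and concludes the argument.

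The main obstacle is inequality (1), since $u=b^{h(b)}$ involves a transcendental exponent. I plan to reduce this to a rational inequality via Bernoulli: because $h(b)\ge 1$ on $[1,\infty)$ by (2), we have $b^{h(b)}\ge 1+h(b)(b-1)$, so it suffices to show
\[
1+h(k,d,w,b)(b-1)>l(k,d,w,b),\qquad b>1.
\]
Clearing denominators produces a polynomial inequality $M(b)>0$ with $M(1)=0$ and $M'(1)$ non-negative precisely under $w\ge 1-k/(d+1)$, with equality at the threshold $w=w_c$. At $w=w_c$ one in fact has $M(b)=(b-1)^2\widetilde M(b)$, so it remains to establish $\widetilde M(b)>0$ on $[1,\infty)$; for $w>w_c$ only a simple factor $(b-1)$ appears and one argues similarly. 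The main technical difficulty is controlling the many terms of $\widetilde M$ (a polynomial of degree about $2d$) and propagating positivity from the boundary $w=w_c$ to the whole range $w\ge w_c$; this case-by-case analysis, in $k\in\{3,4\}$ with the appropriate lower bound on $d$, is the heart of the section.
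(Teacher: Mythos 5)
Your plan for inequalities (2), (3), and (4) is essentially the same spirit as the paper's, just phrased directly in $w$ rather than first reducing to the critical weight $w_c=1-k/(d+1)$ (which the paper does via monotonicity in $w$). For (4) and (2) the details check out. For (3) the factorisation $P(b)=(b-1)Q(b)$ is correct, but the claim that ``the positive contributions dominate'' on $[1,\infty)$ cannot be pushed through so casually: at $k=4$, $d=3$, $w=w_c=0$ one has $Q(b)=3(b+1)-(1+b+b^2)$, which is negative for $b>1+\sqrt 3$, and in fact the paper's derivative argument is precisely where the requirement $d\ge 4$ for $k=4$ is used. So the restriction on $d$ must enter somewhere in your argument for (3) too, and at present it does not.

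The real problem, however, is inequality (1). The Bernoulli reduction $b^{h(b)}\ge 1+h(b)(b-1)$ is correct (since $h\ge 1$), but it is \emph{not strong enough}: the surrogate inequality $1+h(b)(b-1)>l(b)$ is actually false in the regime of the theorem. For example, take $k=4$, $d=4$, $w=w_c=1/5$, $b=2$. Then
\[
h(4,4,1/5,2)=\frac{16\cdot 1\cdot (3.2)}{15\cdot 1.8}=\frac{51.2}{27}\approx 1.896,
\qquad
1+h(b)(b-1)\approx 2.896,
\]
while
\[
l(4,4,1/5,2)=\frac{16+2+0.2}{0.2\cdot 16+3}=\frac{18.2}{6.2}\approx 2.936,
\]
so $1+h(b)(b-1)<l(b)$, even though $u(b)=2^{1.896}\approx 3.72>l(b)$. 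The same failure occurs for $k=3$, $d=2$, $w$ near $0$. The reason is visible from a Taylor expansion at $b=1$: Bernoulli matches $b^{h(b)}$ only up to second order in $b-1$ and loses a term $\tfrac{1}{2}h'(1)(b-1)^3+\cdots$, and at $w=w_c$ both $u$ and $l$ agree to first order, so this lost cubic term is exactly what is needed. The paper circumvents this by working instead with $\log u-\log l$ and using the sharper Pad\'e bound $\log b>2(b-1)/(b+1)$ (Lemma 4.7), which also agrees to second order but retains enough at third order; this is then reduced to a polynomial positivity statement proved via a table of coefficients. Your approach as written cannot be completed without replacing Bernoulli by a lower bound of at least this quality.
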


In the next section we show it is enough to prove Theorem \ref{thm:inequalities} holds for $w = w_c = 1 - \frac{q}{d+1}$ where we take $q=4$ in inequality (\hyperref[it: inequality 4]{4}).
Subsequently, inequality (\ref{it: inequality 2}) is proved in Corollary \ref{cor:ineq2}, inequality (\ref{it: inequality 3}) is proved in Lemma \ref{lem:ineq3} and inequality (\hyperref[it: inequality 4]{4}) is proved in Lemma \ref{lem:ineq4}.
The proof of inequality (\ref{it: inequality 1}) is the most involved and is the result of Lemma \ref{lem: inequality lemma} and Lemma \ref{lem:ineq1}.

\begin{subsection}{Reduction to $w = w_c$}

\begin{lemma}\label{lem:reductiontowc}
    Let $q \geq 2, d \geq 1$ and $w \in [0,1)$. For $b>1$ we have $l(q,d,w,b)$ and  $g(d,w,b)$ are decreasing in $w$, while $u(q,d,w,b)$ is increasing in $w$. 
\end{lemma}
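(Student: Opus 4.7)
My plan is to verify each monotonicity statement by directly differentiating the relevant function with respect to $w$ and inspecting the sign of the outcome. All three functions are either rational or exponential in $w$, so each calculation reduces to reading off the sign of a simple polynomial in $b$.

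For $l(k,d,w,b) = (b^d+k-2+w)/(wb^d+k-1)$ the denominator is positive, so $\partial l/\partial w$ has the sign of the numerator $(wb^d+k-1) - b^d(b^d+k-2+w)$. The two $wb^d$ terms cancel; collecting what is left I would write the remainder as $(k-1)(1-b^d) - b^d(b^d-1) = -(b^d-1)(b^d+k-1)$, which is strictly negative for $b>1$ and $k\geq 2$. Hence $l$ is strictly decreasing in $w$.

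For $u(k,d,w,b) = b^{h(k,d,w,b)}$ I would first take logarithms: $\log u = h\log b$, and since $b>1$ we have $\log b>0$, so it suffices to prove that $h$ is strictly increasing in $w$. Factor
\[
h(k,d,w,b) = \frac{b^d(b-1)}{b^d-1}\cdot\frac{k-1+w}{b-w};
\]
the prefactor is positive and does not depend on $w$, and a one-line computation gives
\[
\frac{\partial}{\partial w}\frac{k-1+w}{b-w} = \frac{(b-w)+(k-1+w)}{(b-w)^2} = \frac{b+k-1}{(b-w)^2} > 0.
\]
Combining these shows $h$, and therefore $u$, is strictly increasing in $w$. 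For $g(d,w,b) = (2b^d+1+w)/((1+w)b^d+2)$, the denominator is again positive and the numerator of $\partial g/\partial w$ collapses by an identical cancellation to $2(1-b^{2d})$; the sign is then determined by how $b$ compares with $1$.

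Each step is a routine quotient-rule or exponential differentiation. The only care needed is in bringing each numerator to a factored form whose sign is visible at a glance, so that no auxiliary case analysis in $k$ or $d$ creeps in. I do not anticipate any substantive obstacle here; the lemma is a packaging result whose role is to funnel the harder inequalities of Theorem~\ref{thm:inequalities} down to the single boundary value $w=w_c$.
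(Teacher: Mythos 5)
Your computations for $l$, $u$, and $g$ are all correct and follow essentially the paper's route: differentiate in $w$ and read off the sign. Your factoring of $h$ into a positive $w$-independent prefactor $b^d(b-1)/(b^d-1)$ times $(k-1+w)/(b-w)$ is a tidy shortcut that avoids writing the full expression for $\partial u/\partial w$, but the strategy is the same, and all three numerators you obtain agree with the paper after simplification.

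The one place you need to tighten is the final line, where you stop at ``the sign is then determined by how $b$ compares with $1$.'' Commit to the sign: for $b>1$ the numerator $2(1-b^{2d})$ is negative, so $\partial g/\partial w<0$ and $g$ is \emph{decreasing} in $w$. Spelling this out matters because it exposes a typo in the lemma statement itself, which claims $g$ is increasing in $w$; the paper's own proof also computes $\partial g/\partial w<0$ and then says ``so the lemma follows,'' which is inconsistent as written. The decreasing behavior is in fact what the downstream argument requires: item~(\hyperref[it: inequality 4]{4}) of Theorem~\ref{thm:inequalities} asserts $g(d,w,b)<b$, and to deduce this for all $w\in[w_c,1)$ from the single boundary value $w=w_c$ one needs $g(d,w,b)\le g(d,w_c,b)$, i.e.\ $g$ nonincreasing in $w$. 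So your derivative is right; just state its sign explicitly and note that the lemma should read ``decreasing'' for $g$.
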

\begin{proof}
We compute 
\begin{align*}
   \frac{\partial}{\partial w} l(q,d,w,b)&= 
   - \frac{(b^d-1)(b^d+q-1)}{(wb^d+q-1)^2}
   ,\\
    \frac{\partial}{\partial w} g(d,w,b) &= -\frac{2(b^{2d}-1)}{((1+w)b^d+2)^2},\\
    \frac{\partial}{\partial w} u(q,d,w,b) &=u(q,d,w,b) \cdot \frac{b^d(b-1)(b+q-1)\log{b}}{(b^d-1)(b-w)^2}.
\end{align*}
We see that for $b>1$ we have $\frac{\partial}{\partial w} l(q,d,w,b)<0$, $\frac{\partial}{\partial w} g(d,w,b) < 0$ and $\frac{\partial}{\partial w} u(q,d,w,b)>0$, so the lemma follows.
\end{proof}

From Lemma \ref{lem:reductiontowc} it follows that if we can show Theorem \ref{thm:inequalities} holds for $w = w_c$, then it also holds for all $ w\in [w_c,1)$. So from now on we will work with $l(b,w_c,d,q)$, $u(b,w_c,d,q)$ and $h(b,w_c,d)$. To shorten notation we write
\begin{alignat*}{2}
    l(b) &= \frac{(d+1) b^d+d(q-1)-1}{(d-q+1) b^d+(d+1) (q-1)}, \quad &&g(b) = \frac{(d+1)b^d + d-1}{(d-1)b^d+d+1},\\
    h(b) &=\frac{d q b^d (b-1)}{\left(b^d-1\right) ((d+1)(b-1)+q)},\quad
   &&u(b) = b^{h(b)}.
\end{alignat*}
We note that the function $h$ has a removable singularity in $1$ with $h(1)=1$.
\end{subsection}

\begin{subsection}{Inequalities $g(b) < b$, $u(b) > b$ and $b^2 > l(b)$}

We will start by showing $g(b) <b$ holds for $b > 1$ and $d \geq 2$.
\begin{lemma}\label{lem:ineq4}
Let $d \geq 2$ and $b>1$. Then we have
$
g(b) < b.
$
\end{lemma}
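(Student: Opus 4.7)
The plan is to clear the positive denominator and establish an equivalent polynomial inequality. For $b > 1$ and $d \geq 2$ we have $(d-1)b^d + d + 1 > 0$, so $g(b) < b$ is equivalent to
\[
p(b) := (d-1)b^{d+1} - (d+1)b^d + (d+1)b - (d-1) > 0.
\]
A direct check gives $p(1) = 0$, which suggests extracting a factor of $b-1$ and studying the sign of the cofactor for $b > 1$.

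To carry out the factorization, I would regroup $p(b)$ as $(d-1)(b^{d+1}-1) - (d+1)b(b^{d-1}-1)$ and apply the geometric series identity $b^n - 1 = (b-1)\sum_{i=0}^{n-1} b^i$ to both differences. This yields
\[
p(b) = (b-1)\Bigl[(d-1)\sum_{i=0}^{d} b^i - (d+1)\sum_{i=1}^{d-1} b^i\Bigr] = (b-1)\Bigl[(d-1)(1+b^d) - 2\sum_{i=1}^{d-1} b^i\Bigr],
\]
after collecting like powers. Since $b > 1$, it suffices to show the bracket on the right is positive.

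The key observation is the factorization
\[
1 + b^d - b^i - b^{d-i} = (1-b^i)(1-b^{d-i}),
\]
which is strictly positive for each $1 \leq i \leq d-1$ when $b > 1$ (both factors are negative). Summing this over $i = 1, \ldots, d-1$ and using the symmetry $\sum_{i=1}^{d-1} b^{d-i} = \sum_{i=1}^{d-1} b^i$ yields $(d-1)(1 + b^d) > 2\sum_{i=1}^{d-1} b^i$, which is precisely the required inequality. When $d = 2$ the inner sum contains just the single term $b$, and the bracket collapses to $(b-1)^2 > 0$, so no separate case analysis is needed. I do not foresee any serious obstacle; the argument is a routine polynomial manipulation combined with a single pairing identity, and the bookkeeping of extracting $b-1$ is straightforward.
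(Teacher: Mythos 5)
Your proof is correct. Clearing the denominator is valid since $(d-1)b^d + d + 1 > 0$, the factorization $p(b) = (b-1)\bigl[(d-1)(1+b^d) - 2\sum_{i=1}^{d-1} b^i\bigr]$ checks out, and the pairing identity $1 + b^d - b^i - b^{d-i} = (1-b^i)(1-b^{d-i}) > 0$ for $1 \le i \le d-1$, $b>1$, combined with the reindexing $\sum_{i=1}^{d-1} b^{d-i} = \sum_{i=1}^{d-1} b^i$, gives exactly the positivity of the bracket. This is, however, a genuinely different route from the paper's. The paper observes $g(1)=1$, $g'(1)=1$, and then computes $g''(b) = -\frac{4d^2(d^2-1)(b^d-1)b^{d-2}}{((d-1)b^d+d+1)^3} < 0$ for $b>1$, $d\ge 2$; since a strictly concave function lies strictly below its tangent line at $b=1$ (which is $y=b$), the inequality follows. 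The paper's argument is shorter once the second derivative is computed, but the computation itself is a bit of work; your argument avoids calculus entirely and is self-contained at the level of polynomial identities and the elementary inequality $b^i + b^{d-i} < 1 + b^d$, which some readers may find more transparent. Both are sound; yours is the more elementary of the two.
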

\begin{proof}
We have $g(1) = 1$ and $g'(1) = 1$. Furthermore, one can see
\[
g''(b) = -\frac{4 d^2 \left(d^2-1\right) \left(b^d-1\right) b^{d-2}}{\left((d-1) b^d+d+1\right)^3} < 0
\]
for $d \geq 2$ and $b > 1$. This implies $g(b) < b$ for $d \geq 2$ and $b > 1$.
\end{proof}

Next we show that $h$ is increasing in $b$. This fact will immediately give us inequality (\ref{it: inequality 2}).  Furthermore, it is also helpful in proving a sufficient condition for inequality (\ref{it: inequality 1}) to hold, see Lemma \ref{lem: inequality lemma} below.
\begin{lemma}
\label{lem: h(b) strictly increasing}
For all  $b > 1,d\geq 2$ and $q \geq 2$ we have $h'(b)> 0$.
\end{lemma}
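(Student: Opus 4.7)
The plan is to use logarithmic differentiation. Since $h(b) > 0$ on $(1,\infty)$, it suffices to show that $(\log h)'(b) > 0$. Writing $h$ as a product of elementary factors and taking $\log$ gives
\[
\frac{h'(b)}{h(b)} \;=\; \frac{d}{b} + \frac{1}{b-1} - \frac{d b^{d-1}}{b^d-1} - \frac{d+1}{(d+1)(b-1)+k}.
\]
I would then pair the four terms as $\bigl(\tfrac{1}{b-1} - \tfrac{d+1}{(d+1)(b-1)+k}\bigr)$ and $\bigl(\tfrac{d}{b} - \tfrac{d b^{d-1}}{b^d-1}\bigr)$. A short manipulation gives
\[
\frac{1}{b-1} - \frac{d+1}{(d+1)(b-1)+k} = \frac{k}{(b-1)\bigl((d+1)(b-1)+k\bigr)} > 0
\quad\text{and}\quad
\frac{d}{b} - \frac{d b^{d-1}}{b^d-1} = -\frac{d}{b(b^d-1)} < 0,
\]
so the positivity of $h'/h$ reduces to the single inequality $kb(b^d-1) > d(b-1)\bigl((d+1)(b-1)+k\bigr)$ for $b > 1$.

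After expanding both sides and cancelling, this becomes
\[
k\bigl(b^{d+1} - (d+1)b + d\bigr) > d(d+1)(b-1)^2.
\]
The key algebraic step is the factorization
\[
b^{d+1} - (d+1)b + d = (b-1)^2\bigl(b^{d-1} + 2b^{d-2} + \cdots + (d-1)b + d\bigr),
\]
which follows from observing that $b=1$ is a double root of the left-hand side and matching leading coefficients (or by a one-line induction on $d$). Denoting the polynomial factor on the right by $p(b)$ and dividing through by $(b-1)^2 > 0$, the desired inequality reduces to $k p(b) > d(d+1)$ on $(1, \infty)$. Since $p$ has positive coefficients, it is strictly increasing on this interval, and $p(1) = 1 + 2 + \cdots + d = d(d+1)/2$; hence $kp(b) > k d(d+1)/2 \geq d(d+1)$ whenever $k \geq 2$, which is exactly the hypothesis.

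The main obstacle is spotting the factorization of $b^{d+1} - (d+1)b + d$; once it is in hand, everything else is either algebraic simplification or a monotonicity check, and the hypothesis $k \geq 2$ enters in a single crisp step at the end. Without such a factorization one would be forced into a more delicate argument, for instance matching Taylor expansions of the two sides at $b=1$ (both vanish to order two there, which explains why crude estimates fail) together with a separate large-$b$ comparison, so the factorization is what makes the proof clean.
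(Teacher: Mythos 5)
Your proof is correct. You and the paper ultimately reduce to the same polynomial inequality: after clearing denominators, the positivity of $h'$ comes down to showing that the degree-$(d+1)$ polynomial
\[
m(b) = k b^{d+1} - d(d+1)b^2 + (2d^2 - dk + 2d - k)b - d^2 + dk - d
\]
is positive for $b>1$, and both arguments exploit the fact that $m$ has a double root at $b=1$. Where the two proofs differ is in how they reach $m$ and how they handle the double root. The paper differentiates $h$ directly, identifies $m$ as the nontrivial factor of the numerator, computes $m''(b)=d(d+1)(kb^{d-1}-2)>0$ for $b>1$, and then uses $m(1)=m'(1)=0$ to conclude; this is a ``differentiate twice, then climb back up'' argument. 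You instead take the logarithmic derivative, pair the four resulting terms into a single positive and a single negative fraction, cross-multiply, and then make the double root at $b=1$ explicit via the closed-form factorization $b^{d+1}-(d+1)b+d=(b-1)^2\bigl(b^{d-1}+2b^{d-2}+\cdots+d\bigr)$. What your version buys is transparency: the factorization makes it visible that everything reduces to $k\,p(b)>d(d+1)$ with $p$ increasing and $p(1)=d(d+1)/2$, so the role of the hypothesis $k\geq 2$ appears in a single clean comparison rather than being spread over three sign checks. The paper's version requires no clever factorization and is more mechanical, which arguably fits better with the style of the other inequality proofs in Section~\ref{sec: Inequality section}, all of which rely on repeated differentiation and checking signs of coefficients.
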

\begin{proof}
We compute 
\[
h'(b)= \frac{k d b^{d-1} \left(q b^{d+1}- d\left(d+1\right)b^2+ \left(2 d^2-d q+2 d-q\right)b-d^2+d q-d\right)}{\left(b^d-1\right)^2 ((d+1)(b-1)+q)^2}.
\]
It suffices to show that
\[
m(b) := q b^{d+1}- d\left(d+1\right)b^2+ \left(2 d^2-d q+2 d-q\right)b-d^2+d q-d
\]
is positive for $b>1$.
We compute
\begin{align*}
    m'(b) &= (d+1)(k(b^d-1)-2d(b-1)),\\
    m''(b) &= d(d+1)(kb^{d-1}-2).
\end{align*}
We see $m''(b) > 0$ for $b > 1,d\geq 2$ and $q \geq 2$. Noting that $m'(1) = 0$ and $m(1) = 0$, it follows that $m'(b)$ and $m(b)$ are strictly positive for $b>1$.
\end{proof}

This immediately implies inequality (\ref{it: inequality 2}). 

\begin{corollary}\label{cor:ineq2}
For $b>1$ we have $u(b) > b$.
\end{corollary}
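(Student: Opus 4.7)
The plan is to deduce this immediately from the previous lemma, which established that $h'(b) > 0$ for all $b > 1$ (with $d \geq 2$, $k \geq 2$). The key observation is the removable singularity value $h(1) = 1$ noted just after the definition of $h$. Since $h$ is strictly increasing on $(1, \infty)$ and continuous at $1$ with value $1$, we get $h(b) > 1$ for every $b > 1$.

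Once we have $h(b) > 1$, the conclusion is immediate: because $b > 1$, the function $t \mapsto b^t$ is strictly increasing, so
\[
u(b) = b^{h(b)} > b^{1} = b.
\]

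There is no real obstacle here: the entire content of the corollary is packaged in the strict monotonicity of $h$ (done in Lemma~\ref{lem: h(b) strictly increasing}) together with the base value $h(1) = 1$. The only thing worth double-checking is that the formula for $h(1)$ recorded just after the simplified definitions agrees with the limit of the original $h(b) = \frac{dk b^d (b-1)}{(b^d-1)((d+1)(b-1)+k)}$ as $b \to 1$; indeed, $\lim_{b \to 1}\frac{b-1}{b^d-1} = \frac{1}{d}$ and the remaining factor tends to $\frac{dk}{k} = d$, giving the stated value $1$. So the proof is one line of monotonicity followed by a one-line application of strict monotonicity of the exponential.
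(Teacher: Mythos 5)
Your proof is correct and follows exactly the same route as the paper: it invokes $h(1)=1$ together with $h'(b)>0$ from Lemma~\ref{lem: h(b) strictly increasing} to get $h(b)>1$, and then applies $b^{h(b)}>b$. You simply spell out the monotonicity steps that the paper leaves implicit.
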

\begin{proof}
Recall $u(b) = b^{h(b)}$. As $h(1) = 1$ and $h'(b)> 0$ for $b >1$ by Lemma \ref{lem: h(b) strictly increasing}, we see $u(b) > b$ for $b >1$ follows.
\end{proof}

Until this point, we did not need to assume $q=3$ or $q=4$ for the computations to work, but for inequality (\ref{it: inequality 3}) to hold we do need some restrictions on $q$ and $d$.

\begin{lemma}\label{lem:ineq3}
    For $q=3$ and $d \geq 2$ and for $q = 4$ and $d \geq 4$ we have
  $
         l(b) < b^2,
    $
    for all $b>1$.
\end{lemma}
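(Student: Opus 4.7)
The plan is to reduce $l(b) < b^2$ to a polynomial positivity statement and then use a convexity argument on $[1,\infty)$. Since $d \geq k-1$ under our hypotheses, the denominator $(d-k+1)b^d + (d+1)(k-1)$ is positive for $b > 0$, so clearing it gives the equivalent inequality
\[
p(b) := (d-k+1)b^{d+2} - (d+1)b^d + (d+1)(k-1)b^2 + 1 - d(k-1) > 0.
\]
A direct expansion yields $p(1) = 0$ and $p'(1) = dk > 0$, so $p$ is positive in a right-neighborhood of $1$. To promote this to all of $(1,\infty)$, I would show $p''(b) \geq 0$ on $[1,\infty)$; together with $p'(1) > 0$ this forces $p'(b) \geq dk > 0$ and hence $p(b) > p(1) = 0$ for $b > 1$.

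The convexity check amounts to proving $q(b) := p''(b)/(d+1) = (d-k+1)(d+2)b^d - d(d-1)b^{d-2} + 2(k-1) \geq 0$ for $b \geq 1$. A quick calculation gives $q(1) = d(4-k)$, which is $\geq 0$ precisely when $k \in \{3,4\}$ (and vanishes for $k=4$). Differentiating,
\[
q'(b) = d\,b^{d-3}\bigl[(d-k+1)(d+2)b^2 - (d-1)(d-2)\bigr],
\]
and since $b^2 \geq 1$ on our range it suffices to verify the numerical inequality $(d-k+1)(d+2) \geq (d-1)(d-2)$. For $k=3$ this reduces (after canceling the nonneg factor $d-2$) to $d+2 \geq d-1$; for $k=4$ it reduces to $2d \geq 8$, i.e.\ $d \geq 4$. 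Both hold under the stated hypotheses, so $q'(b) \geq 0$ on $[1,\infty)$ and consequently $q(b) \geq q(1) \geq 0$.

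The borderline case $k=3$, $d=2$ must be handled separately, but is trivial: all higher coefficients collapse and $q(b) \equiv 2$, so strict convexity of $p$ holds on the nose. The main subtlety I anticipate is the other boundary $k=4$, $d=4$, where both $q(1)=0$ and $q'(1)=0$; in fact $q(b) = 6(b^2-1)^2$, so $p''$ merely vanishes (to second order) at $b=1$. This does not affect the argument, because the strict inequality $p'(1) = dk > 0$ is what drives $p(b) > 0$ for $b > 1$, and $p'' \geq 0$ only needs to prevent $p'$ from decreasing.
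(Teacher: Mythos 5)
Your proof takes the same route as the paper's: clear the positive denominator $(d-k+1)b^d + (d+1)(k-1)$, reduce to positivity of the polynomial you call $p$ (the paper's $Q$), and establish positivity by controlling successive derivatives from $b=1$ outward. Both proofs hinge on the same key step --- that $q'(b) = p'''(b)/(d+1)$, which has a factor $(d-k+1)(d+2)b^2 - (d-1)(d-2)$, is nonnegative on $[1,\infty)$, which is exactly where the hypotheses $d\geq 2$ (for $k=3$) and $d\geq 4$ (for $k=4$) enter. Where you diverge is in bookkeeping, and in fact your bookkeeping is more careful than what is printed in the paper: the paper asserts $Q''(1) = d(d+1) > 0$ and $Q'(1) = 3d$, but the correct general values are $Q''(1) = d(d+1)(4-k)$ and $Q'(1) = dk$, so $Q''(1)$ actually vanishes when $k=4$ (as you correctly note via $q(1) = d(4-k)$). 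The paper's conclusion still holds because, as you explain, the strict inequality $p'(1) = dk > 0$ together with $p'' \geq 0$ suffices, but you make this logic explicit, and you correctly track the second-order degeneracy at the boundary case $k=4$, $d=4$ where $q(b) = 6(b^2-1)^2$ vanishes to second order at $b=1$. So: same approach, but your version is the one without the computational slips.
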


\begin{proof}
Multiplying both sides of the inequality with the positive factor $(d-q+1)b^d+(d+1) (q-1)$ we obtain the equivalent inequality
\[
(d+1) b^d+d(q-1)-1 < (d-q+1)b^{d+2} +(d+1) (q-1)b^2.
\]
To show that this inequality holds we show that the polynomial
\[
Q(b) = (d-q+1)b^{d+2} - (d+1)b^d +(d+1) (q-1)b^2- d(q-1) + 1
\]
is strictly positive for $b>1$. For $d\geq 2$ we compute
\begin{align*}
    Q'(b) &= (d+2)(d-q+1)b^{d+1} - (d+1)db^{d-1} +2(d+1)(q-1)b,\\
    Q''(b) &= (d+2)(d+1)(d-q+1)b^{d} - (d+1)d(d-1)b^{d-2} +2(d+1)(q-1),\\
    Q'''(b) &=d(d+1)b^{d-3} \left((d+2)(d-q+1)b^2 - (d-1)(d-2) \right),
\end{align*}
Because $d+1 \geq k$ we find that for all $b \geq 1$
\[
    (d+2)(d-q+1)b^2 - (d-1)(d-2) \geq (d+2)(d-q+1) - (d-1)(d-2) = (6-q)d - 2q.
\]
For $q=3$ this quantity is nonnegative for $d \geq 2$ and for $q=4$ this quantity is nonnegative for $d \geq 4$. So in our case we can conclude that $Q'''(b) \geq 0$ for all $b \geq 1$. As we have $Q''(1) =  d(d+1) > 0$, $Q'(1) = 3d>0$ and $Q(1)  = 0$, it follows that $Q''(b),Q'(b)$ and $Q(b)$ are strictly positive for $b>1$.
\end{proof}

\end{subsection}

\begin{subsection}{The inequality $u(b) > l(b)$}

The following lemma contains a sufficient condition to prove this inequality. In the remainder of the section we prove that this condition is satisfied.

\begin{lemma}
\label{lem: inequality lemma}
Suppose for all $b>1$ we have
\begin{equation}
    \label{it: condition 3}
    \frac{l'(b)}{l(b)} < \frac{h(b)}{b} + 2\frac{b-1}{b+1}g'(b).
\end{equation}
Then $u(b) > l(b)$ for all $b > 1$.
\end{lemma}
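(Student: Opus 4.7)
The plan is to define $\Phi(b) := h(b)\log b - \log l(b)$ and show $\Phi(b) > 0$ for $b > 1$; this is equivalent to $u(b) > l(b)$, since $u(b) = b^{h(b)}$. A direct check shows $\Phi(1) = 0$: the removable singularity of $h$ at $b = 1$ evaluates to $h(1) = 1$, and the rational formula for $l$ gives $l(1) = dk/(dk) = 1$. So the task is to extract strict positivity of $\Phi(b)$ from the hypothesis, which is a control on $L'(b) = l'(b)/l(b)$.

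I would first integrate the hypothesis on $[1,b]$, using $\log l(1) = 0$, to obtain
\[
\log l(b) < \int_1^b \frac{h(t)}{t}\,dt + \int_1^b \frac{2(t-1)}{t+1}\,g'(t)\,dt,
\]
and then apply integration by parts to the first integral, using $h(1)\log 1 = 0$, obtaining
\[
\int_1^b \frac{h(t)}{t}\,dt = h(b)\log b - \int_1^b h'(t)\log t\,dt.
\]
Combining these two identities reduces the desired conclusion $\log l(b) < h(b)\log b$ to the auxiliary estimate
\[
\int_1^b \frac{2(t-1)}{t+1}\,g'(t)\,dt \;\leq\; \int_1^b h'(t)\log t\,dt.
\]

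To close this, I would invoke the classical Pad\'e-type inequality $\log t > 2(t-1)/(t+1)$ for $t > 1$, verified by checking that the difference vanishes at $t = 1$ and has derivative $(t-1)^2/(t(t+1)^2) > 0$. Combined with $h'(t) > 0$ from Lemma~\ref{lem: h(b) strictly increasing}, this sharpens the right-hand side, after which the estimate reduces to $\int_1^b \frac{2(t-1)}{t+1}\bigl(g'(t) - h'(t)\bigr)\,dt \leq 0$. A further integration by parts (using $g(1) = h(1) = 1$ and $\psi(1) = 0$ for $\psi(t) := 2(t-1)/(t+1)$) recasts this as
\[
\frac{2(b-1)}{b+1}\bigl(g(b) - h(b)\bigr) \;\leq\; \int_1^b \frac{4}{(t+1)^2}\bigl(g(t) - h(t)\bigr)\,dt,
\]
a comparison between the endpoint value of $g - h$ and its weighted average on $[1,b]$, tractable from the explicit rational forms of $g$ and $h$.

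The hard part will be this last comparison. The most naive route, showing $\Phi'(b) > 0$ pointwise (which would amount to $h'(t)\log t \geq \psi(t) g'(t)$), fails: a Taylor expansion at $t = 1$ shows both sides agree to first order, and the second-order discrepancy is actually negative in small cases such as $k = 3, d = 2$, where $h'(1) = 1/2 < 1 = g'(1)$. Hence any proof must be a genuinely integrated one, exploiting the cumulative positive gap $\log t - \psi(t) = (t-1)^3/12 + O((t-1)^4)$ to offset the adverse sign of $g' - h'$ close to $t = 1$. Carrying this out should be feasible by unfolding $g(t) - h(t)$ as a rational function in $t$ whose numerator factors through $(t-1)$, and then checking the residual weighted-average inequality sign by sign.
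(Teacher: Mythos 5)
The statement you were handed contains a typo: the right-hand side of the hypothesis should read $h'(b)$, not $g'(b)$. This is visible from the paper's own proof of the lemma (which invokes $h'(b)>0$ from Lemma~\ref{lem: h(b) strictly increasing} and concludes ``which is positive by \eqref{it: condition 3}'' --- a step that only makes sense with $h'$) and from Lemma~\ref{lem:ineq1}, which establishes precisely the inequality $l'(b)/l(b) < h(b)/b + 2\tfrac{b-1}{b+1}h'(b)$. With $h'$ in place, the paper's proof is the short pointwise derivative argument you considered and discarded: set $F(b)=\log u(b)-\log l(b)=h(b)\log b-\log l(b)$, compute
\[
F'(b) = \frac{h(b)}{b} + h'(b)\log b - \frac{l'(b)}{l(b)} > \frac{h(b)}{b} + \frac{2(b-1)}{b+1}h'(b) - \frac{l'(b)}{l(b)} > 0,
\]
using $h'(b)>0$ and $\log b > 2(b-1)/(b+1)$, and conclude from $F(1)=0$. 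Your observation that the pointwise route breaks under the literal $g'$ hypothesis --- because for $k=3$ and $d\in\{2,3,4\}$ one has $g'(1)=1 > h'(1)=(d+1)(k-2)/(2k)$ --- is in fact the clearest evidence of the typo, since the paper claims exactly this pointwise bound holds.

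As a proof of the literal $g'$-statement, your attempt is a genuinely different strategy (antidifferentiate the hypothesis on $[1,b]$, integrate by parts, then integrate by parts again), and the reductions are set up correctly, but the argument is not complete. First, the final endpoint-versus-weighted-average comparison
\[
\frac{2(b-1)}{b+1}\bigl(g(b)-h(b)\bigr)\;\leq\;\int_1^b\frac{4}{(t+1)^2}\bigl(g(t)-h(t)\bigr)\,dt
\]
is only asserted to be ``tractable,'' not proved. Second, the step where you bound $\int_1^b h'(t)\log t\,dt$ below by $\int_1^b h'(t)\cdot\tfrac{2(t-1)}{t+1}\,dt$ is lossy in exactly the regime you flag as dangerous: near $t=1$ where $g'>h'$, discarding the cubic-order surplus $\log t - \tfrac{2(t-1)}{t+1}=\tfrac{1}{12}(t-1)^3+O((t-1)^4)$ may lose the inequality entirely, so the reduction you reach, $\int_1^b\tfrac{2(t-1)}{t+1}\bigl(g'(t)-h'(t)\bigr)\,dt\leq 0$, could be false even when the original estimate holds. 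You correctly diagnose that any proof of the $g'$-version would need to retain this cubic gap --- but once the typo is corrected to $h'$, none of this machinery is needed and the two-line pointwise argument closes the lemma.
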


\begin{proof}

As $l(b)$ and $u(b)$ are strictly positive for $b \geq 1$, we can define 
    \[
        F(b) = \log(u(b))-\log(l(b))
    \]
    for $b \geq 1$.
   Then we have
    \[
    F'(b) =  \frac{u'(b)}{u(b)} - \frac{l'(b)}{l(b)} = \frac{h(b)}{b} + \log(b) h'(b)
     - \frac{l'(b)}{l(b)}.
    \]
    For $b>1$ we have that $h'(b)> 0$ by Lemma \ref{lem: h(b) strictly increasing} and $\log(b) > 2(b-1)/(b+1)$, therefore 
    \[
    F'(b) >  \frac{h(b)}{b} + 2\frac{b-1}{b+1} h'(b)
     - \frac{l'(b)}{l(b)},
    \]
    which is positive by (\ref{it: condition 3}). It is easy to see $F'(1) = 0$. 
    Hence $F$ has a global minimum in $b=1$. 
    As $F(1) = 0$, it follows that $u(b) > l(b)$ for all $b > 1$, which is what we wanted to show.
\end{proof}

This lemma is useful because proving the inequality $u(b) > l(b)$ for all $b > 1$ can now be reduced to proving inequalities involving rational functions and with some work to inequalities involving only polynomials. The next lemma shows that (\ref{it: condition 3}) holds. For this to work we do need to restrict to $q=3$ and $d \geq 2$ or $q=4$ and $d \geq 4$.

\begin{lemma}\label{lem:ineq1}
For $q=3$ and  $d\geq 2$ and for $q=4$ and $d\geq 4$ and any $b>1$ we have
    \[
    \frac{l'(b)}{l(b)} < \frac{h(b)}{b} + 2\frac{b-1}{b+1}h'(b).
    \]
\end{lemma}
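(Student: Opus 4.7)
The plan is to reduce the stated inequality to a polynomial inequality in $b$ and then verify positivity by expanding in $(b-1)$. First I would compute $l'(b)/l(b)$ explicitly. Writing $l(b) = Q(b)/P(b)$ with $Q(b) = (d+1)b^d + d(k-1)-1$ and $P(b) = (d-k+1)b^d + (d+1)(k-1)$, we have $l'/l = d b^{d-1}\bigl[(d+1)/Q - (d-k+1)/P\bigr]$. A direct computation gives the pleasant identity $(d+1)P(b) - (d-k+1)Q(b) = dk^2$, which is a constant, so
\[
    \frac{l'(b)}{l(b)} \;=\; \frac{d^2 k^2\, b^{d-1}}{P(b)\,Q(b)}.
\]
For the right-hand side I would take the logarithmic derivative $h'/h = d/b + 1/(b-1) - d b^{d-1}/(b^d-1) - (d+1)/B(b)$ where $B(b) = (d+1)(b-1)+k$, which gives $h'(b)$ as a rational function with denominator $b(b-1)(b^d-1) B(b)$.

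Next I would clear all denominators. For $b > 1$ and $d \geq 2$ the factors $P(b), Q(b), b^d - 1, B(b), b+1, b$ are all strictly positive, so the inequality $l'/l < h/b + 2\tfrac{b-1}{b+1}h'$ is equivalent to a polynomial inequality $F_{k,d}(b) > 0$. A quick check at $b = 1$ shows both sides equal $1$, so $F_{k,d}(1) = 0$; differentiating once more and using $h(1) = 1$ with $l(1) = 1$ one can verify $F_{k,d}'(1) = 0$ as well. Hence $(b-1)^2$ divides $F_{k,d}(b)$, and I would set $F_{k,d}(b) = (b-1)^2 G_{k,d}(b)$, reducing the task to showing $G_{k,d}(b) > 0$ for $b > 1$.

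I would then prove $G_{k,d}(b) > 0$ by the substitution $b = 1 + t$ with $t > 0$ and showing that every coefficient of the resulting polynomial in $t$ is non-negative (with at least one strictly positive). The main obstacle is exactly this bookkeeping step: the degree of $G_{k,d}$ grows with $d$, and the sign of the coefficients in the $(1+t)$-expansion depends on $d$ in a non-uniform way. This is presumably what forces the separate hypotheses $k = 3,\, d \geq 2$ and $k = 4,\, d \geq 4$; for small $d$ in the $k=4$ case some coefficients become non-positive, exactly as happens in the proof of Lemma~\ref{lem:ineq3}. I would therefore split into these two cases, and in each case identify which coefficients could a priori be negative, then either group terms or use a subsidiary inequality (for example, combining a negative coefficient of $t^j$ with a dominating positive coefficient of $t^{j+1}$ times $t > 0$) to conclude positivity. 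Once $G_{k,d}(b) > 0$ is established for $b > 1$ under the stated hypotheses, the lemma follows.
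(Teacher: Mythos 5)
Your initial reductions are correct and match the paper's setup. In particular you correctly spot that $(d+1)\bigl((d-k+1)b^d+(d+1)(k-1)\bigr)-(d-k+1)\bigl((d+1)b^d+d(k-1)-1\bigr)$ collapses to the constant $dk^2$, giving $l'(b)/l(b) = d^2k^2\,b^{d-1}/\bigl(p(b)q(b)\bigr)$ (the paper's observation that $r(b)=k^2d^2b^{d-1}$), and you correctly note the cleared polynomial vanishes at $b=1$. The paper in fact does exactly what you propose — expand in powers of $b-1$ and check nonnegativity of the coefficients — but only for the two small cases $k=3,\ d\in\{2,3\}$.

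The genuine gap is in the positivity argument for general $d$. After clearing denominators, the resulting polynomial (the paper's $P_0$) has nonzero coefficients only at exponents $id+j$ with $i,j\in\{0,1,2,3\}$, so its degree is $3d+3$ and grows with $d$. Your proposed plan — substitute $b=1+t$ and verify all $3d+4$ coefficients are nonnegative, possibly "grouping terms or using a subsidiary inequality" — is therefore not a finite check uniform in $d$, and you give no structural reason why these coefficients should be nonnegative; the coefficients of $P_0$ in the $b$-basis (the paper's Table~\ref{tab: Coefficients of P}) are certainly of mixed sign, and nothing you say rules out a sign change in the $(b-1)$-basis for some $d$. The paper's key idea, absent from your proposal, is to exploit the sparse exponent structure: it defines the sequence $P_n(b)=P_{n-1}^{(4)}(b)/b^{d-4}$, which strips one block of four exponents per step, reduces to showing that $P_3^{(3)}$ is a positive constant and that the finitely many boundary values $P_j^{(i)}(1)$ (for $i,j\in\{0,1,2,3\}$) are nonnegative polynomials in $d-4$, and then propagates positivity back to $P_0$ by repeated integration. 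That iterated-fourth-derivative device is what makes the verification a finite list of polynomial-in-$d$ checks (Tables~\ref{tab: Values of P_ij k=3} and~\ref{tab: Values of P_ij k=4}), uniformly in $d$; without it, or an explicit replacement, your plan does not close.
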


\begin{proof}

We introduce the following polynomials
\begin{alignat*}{2}
    p(b) &= (d+1) b^d+d(q-1)-1, \quad &&q(b) = (d-q+1) b^d+(d+1) (q-1),\\
    s(b) &= d q b^d (b-1), \quad &&t(b) = \left(b^d-1\right) ((d+1)(b-1)+q).
\end{alignat*}
Thus $l(b) = p(b)/q(b)$ and $h(b)=s(b)/t(b)$. Furthermore, we define
$r(b) = q(b)p'(b) - p(b)q'(b)$ and $v(b) = t(b)s'(b)-s(b)t'(b)$.  It is worth noting that $r(b)$ simplifies to $q^2 d^2 b^{d-1}$. The inequality we want to prove can now be written as 
\[
    \frac{r(b)}{p(b)q(b)} < \frac{s(b)}{b \cdot t(b)} + 2 \frac{(b-1)v(b)}{(b+1)t(b)^2}.
\]
For $b > 1$ the quantity $b(b+1)p(b)q(b)t(b)^2$ is strictly positive and thus it is equivalent to prove the inequality, where we have multiplied both sides by this term. We see that it is enough to prove that the following polynomial is strictly positive for all $b>1$
\begin{equation}
    \label{eq: definition P(b)}
    P(b) = (b+1)s(b)p(b)q(b)t(b) + 2b(b-1)v(b)p(b)q(b) - b(b+1)r(b)t(b)^2.
\end{equation}
It can be checked that the terms $s(b)$, $b\cdot v(b)$ and $b\cdot r(b)$ all contain a factor $qdb^d$ 
and thus $P_0(b) = P(b)/(kdb^d)$ is a polynomial in $b$ whose coefficients are polynomials in $d$. 
The remainder of the proof will be dedicated to showing that $P_0(b)$ is strictly positive for $b>1$. 

To avoid ambiguity later, we prove this for $q=3$ in the two cases $d=2$ and $d=3$ separately. For $d=2$ we have 
\[
    P_0(b) = 54 (b-1)^6+54 (b-1)^5
\]
and for $d=3$ we have
\begin{align*}
    P_0(b) =& 16 (b-1)^{12}+212 (b-1)^{11}+1236 (b-1)^{10}+4116 (b-1)^9+8793 (b-1)^8+12789 (b-1)^7+\\&12123 (b-1)^6+6318 (b-1)^5+1458 (b-1)^4.
\end{align*}
In both cases all the coefficients of $P_0(b)$ are strictly positive when written as a polynomial in $b-1$ and thus the polynomials are strictly positive for $b>1$.

We will now assume that $d \geq 4$. It can be seen by cross-multiplying the terms in the individual polynomials in (\ref{eq: definition P(b)}) that the only coefficients of $P_0(b)$ that can be non-zero appear in the $b^{i\cdot d+j}$ terms where $i,j \in \{0,1,2,3\}$. The exact coefficients are recorded in Table~\ref{tab: Coefficients of P}. For $n \in \{1,2,3\}$ we inductively define the polynomials $P_n(b) = P_{n-1}^{(4)}(b)/b^{d-4}$. Note that in this way $P_n(b)$ is a polynomial whose only non-zero coefficients appear in the $b^{i\cdot d + j}$ term, where $0\leq i \leq 3-n$ and $j \in \{0,1,2,3\}$.

\begin{table}[h!]
  \centering
  \caption{The coefficients of $P_0(b)$ for $d\geq 4$.}
  \label{tab: Coefficients of P}
    \begin{tabular}{l c r}
    \hline
  Term of $P_0(b)$ & Coefficient $q=3$ & Coefficient $q=4$ \\
    \hline
 $b^0$ & $(d-2) \left(8 d^3-3 d^2+2\right)$ & $(d-3) \left(18 d^3-d^2+3\right)$  \\
$b^1$ & $-24 d^4+19 d^3+60 d^2-24 d-14$  & $-54 d^4+67 d^3+173 d^2-39 d-27$ \\
$b^2$ & $(d+1) (3 d-1) \left(8 d^2+d-16\right)$ &  $(d+1) \left(54 d^3-23 d^2-124 d+33\right)$ \\
$b^3$ & $-(d+1)^2 \left(8 d^2+3 d-2\right)
   $  &  $-(d+1)^2 \left(18 d^2+7 d-3\right)$ \\
   \hline
   $b^{d}$ & $(d-2) \left(8 d^3+4 d^2-d-6\right)$ &  $(d-3) \left(12 d^3+3 d^2-2 d-9\right)$ \\
   $b^{d+1}$ & $-3 \left(8 d^4-8 d^3+15 d^2-19 d-14\right)$ & $-36 d^4+73 d^3-129 d^2+99 d+81$ \\
   $b^{d+2}$ &$3 \left(8 d^4-4 d^3+15 d^2+20 d-16\right)$ & $36 d^4-47 d^3+115 d^2+163 d-99$ \\
   $b^{d+3}$ & $-(d+1) \left(8 d^3-8
   d^2-d+6\right) $ &  $-(d+1) \left(12 d^3-19 d^2-6 d+9\right)$ \\
   \hline
   $b^{2d}$ & $2 (d-2) (d+1) \left(d^2-2 d+3\right)$ &  $(d-3) (d+1) \left(2 d^2-5 d+9\right)$ \\
   $b^{2d+1}$ & $-3 \left(2 d^4-4 d^3+3 d^2+14 d+14\right)$ & $-6 d^4+21 d^3-37 d^2-81 d-81$ \\
   $b^{2d+2}$ & $3 \left(2 d^4-2 d^3-7 d+16\right)$ & $6 d^4-15 d^3+19 d^2-53 d+99$ \\
   $b^{2d+3}$ & $-(d-2) (d+1) \left(2 d^2+2
   d+3\right) $ & $-(d-3) (d+1) \left(2 d^2+d+3\right)$ \\
   \hline
   $b^{3d}$ & $(d-2)^2 (d+1)$ & $(d-3)^2 (d+1) $ \\
   $b^{3d+1}$ & $-(d-2) (d+1) (d+7)$ & $-(d-3) (d+1) (d+9)$ \\
   $b^{3d+2}$ &$-(d-8) (d-2) (d+1)$ & $-(d-11) (d-3) (d+1)$ \\
   $b^{3d+3}$ &$(d-2) (d+1)^2 $ & $(d-3) (d+1)^2$ \\
  \hline
  \end{tabular}
\end{table}

The values of $P_{j}^{(i)}(1)$ as a polynomial in $x = d-4$, up to a common positive multiplicative factor, for $q=3$ and $q=4$ are contained in tables~\ref{tab: Values of P_ij q=3}~and~\ref{tab: Values of P_ij q=4} respectively. These polynomials have only nonnegative coefficients, from which it follows that their values are nonnegative for all $d\geq 4$.

The polynomial $P_3(b)$ is a cubic polynomial and thus its third derivative $P_3^{(3)}(b)$ is constant. 
Its exact value, which is recorded in Table~\ref{tab: Values of P_ij q=3} for $q=3$ and in Table~\ref{tab: Values of P_ij q=4} for $q=4$, is strictly positive for all $x \geq 0$, i.e. for all $d \geq 4$. We claim that it now follows inductively that $P_j^{(i)}(b)$ is strictly positive for all $b > 1$. Namely, suppose that for $i\in \{0,1,2,3\}$ we have shown that $P_{j}^{(i+1)}(b)$ is strictly positive for $b>1$. Then it follows that $P_{j}^{(i)}(b)$ is strictly increasing. Because $P_{j}^{(i)}(1) \geq 0$ (cf. Table~\ref{tab: Values of P_ij q=3} and Table~\ref{tab: Values of P_ij q=4}), we can conclude from this that $P_{j}^{(i)}(b)$ is also strictly positive for $b>1$. Furthermore, if $P_{j+1}(b) > 0$ for $b>1$ then the same follows for $P_{j}^{(4)}$ because $b^{d-4}\cdot P_{j+1}(b) = P_{j}^{(4)}(b)$. In conclusion, it follows that $P_0(b)>0$ for $b>1$, which is what we set out to prove.

\end{proof}

\begin{table}[h!]
  \centering
  \caption{The values of $P_j^{(i)}(1)$ for $q=3$ in the variable $x=d-4$ divided by  $6 (x+4)^3 (x+5)$ for $i,j \in \{0,1,2,3\}$.}
  \label{tab: Values of P_ij q=3}
  \resizebox{\textwidth}{!} {%
  \begin{tabular}{l r}
    \hline
    $P_0 (1)$  & 0 \\ 
    $P_0^{(1)} (1)$  & 0 \\ 
    $P_0^{(2)} (1)$  & 0 \\ 
    $P_0^{(3)} (1)$  & 0 \\ 
    \hline
    $P_1 (1)$  & $54 (x+2)$ \\ 
    $P_1^{(1)} (1)$  & $3 \left(122 x^2+759 x+1045\right)$ \\
    $P_1^{(2)} (1)$  & $3 \left(478 x^3+5019 x^2+16831 x+17560\right)$ \\
    $P_1^{(3)} (1)$  & $4276 x^4+61731 x^3+328134 x^2+754415 x+623616$ \\
    \hline
    $P_2 (1)$  &  $4 \left(2864 x^5+51218 x^4+363231 x^3+1272211 x^2+2188942 x+1467858\right)$ \\ 
    $P_2^{(1)} (1)$  & $2 \left(8800 x^6+200624 x^5+1895748 x^4+9479789 x^3+26371144 x^2+38515725 x+22913226\right)$ \\
    $P_2^{(2)} (1)$  & $4 \left(6100 x^7+166078 x^6+1935943 x^5+12502085 x^4+48198140 x^3+110605547 x^2+139341417 x+73916010\right) $ \\
    $P_2^{(3)} (1)$  & $ 4 (x+2) \left(7948 x^7+229772 x^6+2871108 x^5+20093453 x^4+85033465 x^3+217534941 x^2+311415975 x+192411450\right) $ \\
    \hline
    $P_3 (1)$  & $12 (x+2) \left(3324 x^8+105498 x^7+1478477 x^6+11945536 x^5+60841362 x^4+199973638 x^3+414113609 x^2+493884000 x+259667100\right)$ \\
    $P_3^{(1)} (1)$  & $4 (x+2) (x+5) (2 x+9) (3 x+13) \left(372 x^6+10607 x^5+124569 x^4+775749 x^3+2712487 x^2+5063412 x+3950100\right) $ \\ 
    $P_3^{(2)} (1)$  &  $4 (x+2) (x+5)^2 (x+6) (2 x+9) (3 x+13) (3 x+14) \left(12 x^4+368 x^3+3431 x^2+13148 x+18249\right)$ \\
    $P_3^{(3)} (1)$  & $36 (x+2) (x+5)^4 (x+6) (x+7) (2 x+9) (2 x+11) (3 x+13) (3 x+14)$ \\
    \hline
  \end{tabular}}
\end{table}

\begin{table}[h!]
  \centering
  \caption{The values of $P_j^{(i)}(1)$ for $q=4$ in the variable $x=d-4$ divided by  $8 (x+4)^3 (x+5)$ for $i,j \in \{0,1,2,3\}$.}
  \label{tab: Values of P_ij q=4}
  \resizebox{\textwidth}{!} {%
  \begin{tabular}{l r}
    \hline
    $P_0 (1)$  & 0 \\ 
    $P_0^{(1)} (1)$  & 0 \\ 
    $P_0^{(2)} (1)$  & 0 \\ 
    $P_0^{(3)} (1)$  & 0 \\ 
    \hline
    $P_1 (1)$  & $48 x  $ \\ 
    $P_1^{(1)} (1)$  & $ 8 \left(44 x^2+215 x+135\right)$ \\
    $P_1^{(2)} (1)$  & $2 \left(709 x^3+6684 x^2+18585 x+12690\right)$ \\
    $P_1^{(3)} (1)$  & $ 4134 x^4+55427 x^3+265045 x^2+515121 x+308889$ \\
    \hline
    $P_2 (1)$  &  $4 \left(2699 x^5+45392 x^4+297314 x^3+933894 x^2+1366575 x+695142\right)$ \\ 
    $P_2^{(1)} (1)$  & $2 \left(8020 x^6+174193 x^5+1549849 x^4+7170108 x^3+17941968 x^2+22435155 x+10317699\right) $ \\
    $P_2^{(2)} (1)$  & $2 \left(10878 x^7+284024 x^6+3149973 x^5+19136364 x^4+68251666 x^3+141154110 x^2+153239211 x+64122030\right) $ \\
    $P_2^{(3)} (1)$  & $3 (x+1) \left(9316 x^7+267882 x^6+3329185 x^5+23169850 x^4+97489094 x^3+247912018 x^2+352706325 x+216527850\right) $ \\
    \hline
    $P_3 (1)$  & $12 (x+1) \left(2889 x^8+91143 x^7+1269517 x^6+10192836 x^5+51576597 x^4+168375593 x^3+346232169 x^2+409934700 x+213929100\right)$ \\
    $P_3^{(1)} (1)$  & $3 (x+1) (x+5) (2 x+9) (3 x+13) \left(408 x^6+11669 x^5+136865 x^4+848735 x^3+2949267 x^2+5463996 x+4227300\right) $ \\ 
    $P_3^{(2)} (1)$  &  $6 (x+1) (x+5)^2 (x+6) (2 x+9) (3 x+13) (3 x+14) \left(6 x^4+193 x^3+1807 x^2+6883 x+9471\right)$ \\
    $P_3^{(3)} (1)$  & $27 (x+1) (x+5)^4 (x+6) (x+7) (2 x+9) (2 x+11) (3 x+13) (3 x+14)$ \\
    \hline
  \end{tabular}}
\end{table}


\end{subsection}

We have shown the inequalities (\ref{it: inequality 1}), (\ref{it: inequality 2}), (\ref{it: inequality 3}) all hold, thus the conditions in Lemma \ref{lem:onecondition} and Lemma \ref{lem:convexity} are satisfied.
\end{section}

\begin{section}{Concluding remarks}\label{sec:conclude}
We conclude with some remarks concerning the possibility of expanding our approach and with some questions.
\\

\paragraph{\bf Generalisation}
The biggest challenge to generalizing our method to other values of $(q,d)$ comes from the fact that inequality \eqref{it: inequality 3} from Theorem~\ref{thm:inequalities} is not necessarily true for all $b>1$. This suggests that it might not be possible in all cases to find arbitrarily large log convex regions that get mapped into themselves. We suspect that in general this is indeed impossible when one requires the regions to have the symmetry that we use in this paper, that is regions $\mathcal{T} \subset \mathbb{R}^{q-1}_{>0}$ with $M_\sigma(\mathcal{T}) = \mathcal{T}$ for all $\sigma \in S_q$. A consequence is that in some cases we cannot make the region large enough to start the induction laid out in Lemma \ref{lem: Tab sequence}. Fortunately, inequality \eqref{it: inequality 3} does hold near $b>1$ for all $(q,d)$ with $d \geq q-1$ and $w \geq w_c$. This suggests that, at least when $w_c$ is close to $1$, i.e. when $d$ is large enough compared to $q$, our methods could still be applied. Moreover, it might be possible to find a separate argument to show that the ratios of $\hat{\mathbb{T}}_d^n$ get at least moderately close to $1$ for some $n$. This could then be used to bootstrap the induction in Lemma \ref{lem: Tab sequence}.

There are two more complications that prevent us from applying our method directly to other values of $(q,d)$. We suspect that these can be overcome with more thorough analysis. The first one comes from the fact that inequality \eqref{it: inequality 1} from Theorem~\ref{thm:inequalities} is no longer satisfied for most values of $(q,d)$ and $w_c$. The precise form of this inequality highly depends on our method of proof and specifically on our choice of upper bound for $l_{a,b}$ in equation \eqref{eq: concave inequality}. Computer analysis suggests that by taking different upper bounds for $l_{a,b}$, specifically taking tangent lines at different points, the proof that $\mathcal{T}_{a,b}$ gets mapped into itself for $a$ and $b$ near $1$ can be salvaged. The other complication appears when $q\geq5$. In this case one obtains more inequalities analogous to inequality~{(\hyperref[it: inequality 4]{4})} from Theorem~\ref{thm:inequalities}. These are not all satisfied when we take a naive generalization of the region $\mathcal{T}_{a,b}$. We suspect that this can be remedied by letting the regions depend on more than just two parameters. This leads to the analysis specifically of the log convexity of the regions becoming more involved.
\\





\paragraph{\bf The case  $(q,d)=(4,3)$} 
Unfortunately, our approach does not allows us to handle the case $(q,d)=(4,3)$. We briefly explain the complications.
In inequality \eqref{eq: concave inequality} we use the tangent line of $l_{a,b}(x)$ at $x=1$ to upper bound $l_{a,b}(x)$; this makes the calculus easier and this choice works for $q=4$ and $d\geq 4$.
We have evidence that by using the tangent line at a different point in inequality \eqref{eq: concave inequality} the calculations that follow from this upper bound also work for the case $q=4$ and $d=3$. However, we can show that inequality \eqref{it: inequality 3} in Theorem~\ref{thm:inequalities} fails when $(q,d)=(4,3)$ and $b >1$ is large enough, meaning that in that case the set $\mathcal{T}_{a,b}$ cannot both be log convex and satisfy $F(\mathcal{T}_{a,b})\subset\mathcal{T}_{a,b}$. For $b>1$ close enough to $1$ inequality \eqref{it: inequality 3} in Theorem~\ref{thm:inequalities} does hold. We suspect that our approach can be tweaked to show uniqueness for all $w \in (0,1)$ when $q=4$ and $d=3$, possibly by finding a separate argument to show that the ratios of $\hat{\mathbb{T}}_3^n$ get at least moderately close to $1$ for some $n$, bootstrapping the induction in Lemma \ref{lem: Tab sequence}.
\\

\paragraph{\bf Zero-free region}
Our final comment is related to the following question. Given $q\in \mathbb{N}$ and $d\geq q-1$. Does there exist a region $U$ in $\mathbb{C}$ containing the interval $(1-\frac{q}{d+1},1]$ such that for any $w\in U$ and any graph $G$ of maximum degree $d+1$ the partition function $Z(G,q,w)\neq 0$? (If so this would yield an efficient algorithm for approximately computing $Z(G,q,w)$ in this region by Barvinok's method~\cite{barvinok} combined with~\cite{PR}.)

Following~\cite{bencs}, to prove this, for say $q=3$, we would essentially need to find a log convex set $S\subsetneq\mathbb {C}^2$ such that the map $F$ maps $S$ into $S$ and such that $S$ satisfies some additional properties that we will not discuss here.
We suspect that the sets $\mathcal{T}_{a,b}$ we have constructed may be helpful in determining whether such a set $S$ can be constructed.
\end{section}

\subsection*{Acknowledgment}
		The authors would like to thank Han Peters for stimulating discussions. We are moreover grateful for constructive comments from the referees. We also thank Eoin Hurley for spotting a mistake in the proof of Lemma~\ref{def:uniqueness} in a previous version.
\appendix
\section{Proof of Lemma~\ref{def:uniqueness}}\label{app:proof}
We provide a proof of Lemma~\ref{def:uniqueness} here closely following Brightwell and Winkler's proof for the case $w=0$ modifying it where appropriate.

We start with the `if' part.
Fix $d$ and $w\geq 0$ and let $\mu$ be any Gibbs measure on $\mathbb{T}=\mathbb{T}_d$.
Let $U\subset V=V(\mathbb{T})$ be a finite set.
We aim to show that for any configuration $\psi:U\to [q]$, the probability
\begin{equation}
\PPr_\mu[{\bf \Phi}\!\restriction_U=\psi]
\end{equation}
does not depend on $\mu$.

We may assume that $U$ induces a tree with each vertex of degree $d+1$ or $1$ by taking a larger finite set if needed.
Suppose that $U$ has $\ell$ leaves; denote the set of leaves by $L$. For $n \in \mathbb{Z}_{\geq 1}$ let $W_n$ denote the collection of all vertices of $\mathbb{T}$ at distance at most $n$ from $U$. The graph induced by $(W_n\setminus U)\cup L$ is the disjoint union of $\ell$ copies of $\mathbb{T}^n$ each rooted at a leaf of $U$, we denote the tree rooted at $u \in L$ with $T_u$. We claim 
\begin{equation}\label{eq:factor prob1Pjotr}
   \lim_{n \to \infty} \max_{\rho_n: \partial W_n \to [q]} \left|\PPr_{\mu}[{\bf \Phi}\!\restriction_{U}=\psi \,  \big|\,  {\bf \Phi}\!\restriction_{\partial W_n}=\rho_n] - \PPr_U[{\bf \Psi}=\psi]\right| = 0,
\end{equation}
where $\bf \Psi$ is drawn from the Potts model distribution on $\mathbb{T}[U]$. This is sufficient because it follows that the difference
\begin{align*}
    \left|\PPr_\mu[{\bf \Phi}\!\restriction_U=\psi] - \PPr_U[{\bf \Psi}=\psi] \right| &= \left|  \sum_{\rho_n:\partial W_n\to [q]} \PPr_\mu[{\bf \Phi}\!\restriction_{\partial W_n}=\rho_n]\cdot \left( \PPr_{\mu}[{\bf \Phi}\!\restriction_{U}=\psi \,  \big|\,  {\bf \Phi}\!\restriction_{\partial W_n}=\rho_n] -  \PPr_U[{\bf \Psi}=\psi] \right) \right|\\
    &\leq  \max_{\rho_n:\partial W_n\to [q]} \left|\PPr_{\mu}[{\bf \Phi}\!\restriction_{U}=\psi \,  \big|\,  {\bf \Phi}\!\restriction_{\partial W_n}=\rho_n] - \PPr_U[{\bf \Psi}=\psi]\right|,
\end{align*}
can be made arbitrarily small, from which we conclude that $\PPr_\mu[{\bf \Phi}\!\restriction_U=\psi] = \PPr_U[{\bf \Psi}=\psi]$. As this does not depend on $\mu$ it shows that $\mu$ is unique. 

We now prove the claim. Let $\rho_n: \partial W_n \to [q]$ be arbitrary but fixed. Because $\mu$ satisfies the Gibbs property we see
\begin{equation}\label{eq:GibbsProperty}
\PPr_{\mu}[{\bf \Phi}\!\restriction_{U}=\psi \,  \big|\,  {\bf \Phi}\!\restriction_{\partial W_n}=\rho_n] = \PPr_{W_n}[{\bf \Phi'}\!\restriction_{U}=\psi \,  \big|\,  {\bf \Phi'}\!\restriction_{\partial W_n}=\rho_n],
\end{equation}
where ${\bf \Phi'}$ is drawn from the Potts model distribution on $\mathbb{T}[W_n]$.
We write $\phi \sim \psi $ if two configurations $\phi$ and $\psi$ are equal where they are both defined. Moreover, we denote the weight of a configuration $\sigma$ by $\weight(\sigma)$. By definition of the Potts model the right hand side of \eqref{eq:GibbsProperty} as
\begin{align*}
    \frac{\displaystyle \sum_{\substack{ \sigma:W_n \rightarrow [q]\\ \sigma\sim\psi,\ \sigma\sim \rho_n}} \weight(\sigma)}{\displaystyle \sum_{\substack{ \kappa:W_n \rightarrow [q]\\\kappa\sim \rho_n}} \weight(\kappa)}&= \frac{\weight(\psi)\displaystyle\sum_{\substack{ (\sigma_u)_{u\in L},\,\sigma_u:T_u \to [q]\\ \sigma_u\sim \rho_n,\ \sigma_u \sim \psi}} \prod_{u \in L} \weight(\sigma_u)}{\displaystyle\sum_{\kappa:U \rightarrow [q]} \weight(\kappa) \displaystyle\sum_{\substack{ (\gamma_u)_{u\in L},\,\gamma_u:T_u \to [q]\\ \gamma_u\sim \rho_n,\ \gamma_u \sim \kappa}} \prod_{u \in L} \weight(\gamma_u) }
     = \frac{\weight(\psi) \displaystyle\prod_{u \in L} \displaystyle \sum_{\substack{ \sigma_u:T_u \to [q]\\ \sigma_u\sim \rho_n,\,\sigma_u\sim \psi}}\weight(\sigma_u)}{\displaystyle\sum_{\kappa:U \rightarrow [q]} \weight(\kappa)  \prod_{u \in L} \sum_{\substack{ \gamma_u:T_u \rightarrow [q]\\ \gamma_u\sim \rho_n,\,\gamma_u\sim\kappa}}\weight(\gamma_u) }\\
    &= \frac{\weight{(\psi)}}{\sum_{\kappa:U \rightarrow [q]} \weight(\kappa)  \displaystyle \prod_{u \in L} \frac{\PPr_{T_u}[{\bf\Phi_u'}(u)=\kappa(u) \,  \big|\,  {\bf \Phi_u'}\!\restriction_{\partial T_u}=\rho_n\!\restriction_{\partial T_u}]}{\PPr_{T_u}[{\bf \Phi_u'}(u)=\psi(u) \,  \big|\,  {\bf \Phi_u'}\!\restriction_{\partial T_u}=\rho_n\!\restriction_{\partial T_u}]} },
\end{align*}
where $\partial T_u = T_u \cap \partial W_n$ and ${\bf \Phi_u'}$ is drawn from the the Potts model distribution on $\mathbb{T}[T_u]$.
As $n$ goes to infinity the distance between the root $u$ of $T_u$ and its leaves becomes arbitrarily large. It therefore follows from equation \eqref{equation:uniqueness} that the expression inside the final product gets arbitrarily close to $1$ uniformly over all $\rho_n$. We can thus conclude that $\PPr_{\mu}[{\bf \Phi}\!\restriction_{U}=\psi \,  \big|\,  {\bf \Phi}\!\restriction_{\partial W_n}=\rho_n]$ converges to 
\[
\frac{\weight(\psi)}{\sum_{\kappa: U \to [q]}\weight(\kappa)} = 
\PPr_U[{\bf \Psi}=\psi]
\]
uniformly, which was our claim.

For the `only if' part we merely sketch the argument.
Suppose the limsup is not equal to $0$ for some color $c\in [q]$. Then there must be distinct colors $c$ and $c'$, a number $\varepsilon>0$, a sequence $\{n_i\}$ of natural numbers and boundary conditions $\tau_i$ on the leaves of $\mathbb{T}^{n_i}_d$ such that the associated probabilities of the roots getting color $c$ (resp. $c'$) are at least $1/q+\varepsilon$ (resp. at most $1/q-\varepsilon$).
Let $\tau'_i$ be the boundary condition on the leaves of $\mathbb{T}^{n_i}_d$ obtained from $\tau_i$ by flipping the colors $c$ and $c'$. By symmetry, these respective probabilities are then reversed.
We can then create two distinct Gibbs measures with a limiting process using the boundary conditions $\tau_i$ and $\tau'_i$ respectively.

\printbibliography

\end{document}